\documentclass[11pt]{article}
\usepackage{preamble}
\usepackage{times}

\usepackage[backend=biber, maxbibnames=99]{biblatex}
\addbibresource{references.bib}
\AtEveryBibitem{
    \clearfield{url}
    \clearfield{issn}
    \clearfield{urlyear}
    \clearfield{urlmonth}
    \clearfield{month}
    \clearfield{language}
}

\newcommand{\footremember}[2]{
   \footnote{#2}
    \newcounter{#1}
    \setcounter{#1}{\value{footnote}}
}

\begin{document}

\title{Transfer Learning under Covariate Shift: Local $k$-Nearest Neighbours Regression with Heavy-Tailed Design}

\author{
    Petr Zamolodtchikov\footremember{alley}{Department of Applied Mathematics, University of Twente, 7522 NB Enschede, The Netherlands}\footremember{trailer}{The research has been supported by the NWO Vidi grant
VI.Vidi.192.021.}
    \ and Hanyuan Hang 
}
\date{\today}

\maketitle

\begin{abstract}
  Covariate shift is a common transfer learning scenario where the marginal distributions of input variables vary between source and target data while the conditional distribution of the output variable remains consistent. The existing notions describing differences between marginal distributions face limitations in handling scenarios with unbounded support, particularly when the target distribution has a heavier tail. To overcome these challenges, we introduce a new concept called \textit{density ratio exponent} to quantify the relative decay rates of marginal distributions' tails under covariate shift. Furthermore, we propose the \textit{local k-nearest neighbour regressor} for transfer learning, which adapts the number of nearest neighbours based on the marginal likelihood of each test sample. From a theoretical perspective, convergence rates with and without supervision information on the target domain are established. Those rates indicate that our estimator achieves faster convergence rates when the density ratio exponent satisfies certain conditions, highlighting the benefits of using density estimation for determining different numbers of nearest neighbours for each test sample. Our contributions enhance the understanding and applicability of transfer learning under covariate shift, especially in scenarios with unbounded support and heavy-tailed distributions.
\end{abstract}

\section{Introduction}
\label{sec.intro}

Transfer learning aims to leverage the knowledge gained from the \textit{source} distribution $\P$ to improve the performance on the \textit{target} distribution $\Q$, especially when limited data is available from the target distribution.
In contrast with traditional machine learning approaches, where models are trained and evaluated on the same distribution, transfer learning allows models to generalize knowledge across source and target distributions, potentially improving performance on the target distribution with limited data.
Transfer learning has been successfully applied in various domains, including computer vision \cite{li2020transfer,wang2022transfer}, natural language processing \cite{ruder2019transfer}, healthcare \cite{ali2021enhanced,ebbehoj2022transfer}, speech recognition \cite{shivakumar2020transfer} and so on. 
By adapting the knowledge of the source distribution to the target distribution, transfer learning helps to improve the performance of the learning algorithms in these fields.

In this article, we focus on one of the most common settings in transfer learning, i.e., \emph{covariate shift} \cite{shimodaira2000improving}. 
Under this setting, the marginal distributions of the input variables may differ from the source to target data, whereas the conditional distribution of the output variable, given the input variables, remains the same across the source and the target distribution. Covariate shift indicates that there has been a change in the distribution of input features across domains. This change may be caused by factors such as different periods, different devices, and diverse environments, leading to variations in the data distribution. For example, consider a computer vision scenario where source data consists of images captured during the day, while target data comprises images taken at night. Due to differences in lighting conditions and other factors, a covariate shift exists between the two sets, potentially leading to a decline in the performance of a model trained on nighttime images when applied to daytime images. In addition to this example, covariate shift is also conceivable in a variety of real-world fields, including semantic segmentation \cite{richter2016playing,cordts2016cityscapes}, speech recognition \cite{sainath15b_interspeech}, sentiment analysis \cite{blitzer2006domain}, and reinforcement learning \cite{cobbe2019quantifying}.

In the theoretical studies of algorithms under covariate shift, 
various notions have been proposed to describe differences in feature probabilities between the source and target domains. 
However, these notions typically focus on scenarios with bounded support and do not account for unbounded support situations.
For instance, \cite{kpotufe2021marginal} introduces the concept called transfer-exponent that parameterizes ball-mass ratios $\Q\{\B(x,r)\}/\P\{\B(x,r)\}$ to encode the singularity of $\Q$ with respect to $\P$. 
However, if $\Q$ has heavier tail than $\P$, the transfer-exponent $\gamma$ is infinite. Consequently, according to the convergence rates established in \cite{kpotufe2021marginal}, no matter how much data is provided, it cannot improve the convergence rates of classifiers in the target domain, implying that source domain data does not assist in predicting the target domain. 
More recently, \cite{pathak2022new} introduces a new similarity measure between two probability measures. However, the similarity measure becomes infinite when $\Q$ has heavier tails than $\P$.
In this case, the theoretical results established by \cite{pathak2022new} are no longer valid.
Notice that under the covariate shift assumption, where the source domain and the target domain share the same conditional distribution of labels given features, the relationship learned between features and labels on $\P$ may still be effective on $\Q$, indicating an improvement of predictions on the target domain with the usage of the data from $\P$. 
However, existing notions and the corresponding theoretical results cannot prove this point, indicating that in some cases, the exponent cannot effectively describe the relationship between $\P$ and $\Q$ with unbounded support, especially when $\Q$ has heavier tails than $\P$. This reflects the limitation of these notions.

To address these challenges, we first introduce a new concept called \textit{density ratio exponent} to describe the relationship between the marginal distributions of $\P$ and $\Q$ under covariate shift, which can quantify the relative decay rates of the tails of the source and target marginal distributions. Furthermore, based on the information from the marginal distributions of $\P$ and $\Q$,
we propose a \textit{local} $k$-nearest neighbour regressor ($k$-NN) for transfer learning under covariate shift, which assigns different numbers of nearest neighbours to test samples. Compared to the standard $k$-NN regressor, the local choice of $k$ depends on the relevance of test instances to the source distribution. 
Specifically, if a test instance belongs to a high-probability region of the source distribution, more neighbours of this instance are used for prediction. By contrast, if a test instance is unlikely to appear in the source domain, only a few neighbours are used for prediction.

The contributions of this article can be summarised as follows.

\textit{$(i)$} 
We introduce the concept of the \textit{density ratio exponent} to characterise the relationship between the marginal density functions of the source and target distributions. In comparison to existing notions under the covariate shift assumption, the density ratio exponent can quantify the relative decay rates between the tails of $\P$ and $\Q$.

\textit{$(ii)$}
We propose a \textit{local} $k$-NN for transfer learning under covariate shift. From a theoretical perspective, convergence rates are established for both the supervised and unsupervised cases when $k$ is allowed to vary for each test sample based on its marginal distribution.

\textit{$(iii)$}
Our theoretical results indicate that, compared to the standard $k$-NN regressor, our estimator can achieve faster convergence rates when the density ratio exponent satisfies certain conditions. This demonstrates the advantages of utilising density estimation to determine varying numbers of nearest neighbours for each test sample from the perspective of convergence rates.\\

\noindent \textbf{Paper organization.} We introduce the statistical model, our working assumptions, and the local $k$-NN regressor in Section \ref{sec.prelims}. We then state and discuss our main results in Section \ref{sec.results}. An outline of the proof for the rates of our two-sample local $k$-NN is given in Section \ref{sec.proof.outline} before concluding and drawing future research directions in Section \ref{sec.conclusion}. All the proofs are deferred to the appendix.

\section{Preliminaries}\label{sec.prelims}

\noindent\textbf{Notations}
We use the notation $a_n \lesssim b_n$ to denote that there exists a positive constant $c$ and an integer $N$ such that $a_n \leq c b_n$, for all $n \geq N$. $a_n \gtrsim b_n$ is defined in a similar way, and we denote $a_n \asymp b_n$ whenever $a_n \lesssim b_n$ as well as $a_n \gtrsim b_n.$
For any $x \in \RR^d$ and $r > 0$, we denote $\B(x,r) := \{ x' \in \mathbb{R}^d : \|x' - x\| \leq r \}$ as the closed ball centered in $x$ with radius $r$ in the Euclidean norm denoted by $\|\cdot\|.$ The infinite norm of a function $f$ is denoted by $\|f\|_{\infty}.$ Given a sample $(X_1, Y_1), \dots, (X_n, Y_n),$ and a point $x \in \RR^d,$ we denote by $X_i(x)$ the $i$-th closest neighbour of $x$ among $\{X_1, \dots, X_n\}.$ That is, for all $x \in \RR^d,$ we have $\{X_1(x), \dots, X_n(x)\} = \{X_1, \dots, X_n\},$ and $\|x - X_1(x)\| \leq \dots \leq \|x - X_n(x)\|.$
We denote by $Y_i(x)$ the label of $X_i(x),$ and by $R_i(x) := \|x - X_i(x)\|,$ the $i$-th nearest neighbour distance.
For $n \in \NN$ and $x_1, \dots, x_n \in \RR^d,$ we denote $x^n := (x_1, \dots, x_n).$ Similarly, for a collection of $\RR^d$-valued random variables $X_1, \dots, X_n,$ we denote $X^n := (X_1, \dots, X_n).$ Since we consider distributions $\P$ that admit a density $p$ with respect to the Lebesgue measure on $\RR^d,$ we denote $\supp(\P) := \supp(p) := \{x \in \RR^d: p(x) > 0\}.$ Let $\mM(\ol p)$ be the class of probability distributions $\P$ on $\big(\RR^d, \mB(\RR^d)\big)$ that are absolutely continuous with respect to the Lebesgue measure on $(\RR^d, \mB(\RR^d)),$ and with density $p$ such that $\|p\|_\infty \leq \ol p < \infty.$\\

\noindent\textbf{Nonparametric regression model under covariate shift.} We consider the standard nonparametric regression model. Let $(X, Y) \sim \Q_{\sX, \sY}$ be a couple of random variables with values in $\RR^d \times \RR.$ We assume the existence of a function $f \colon \RR^d \to \RR$ satisfying
\begin{align}
    \label{eq.model}
    Y = f(X) + \eps,
\end{align}
where $\eps$ is a centered random variable, independent from $X,$ and which satisfies the uniform exponential condition
\begin{assumption}[Uniform exponential condition]\label{ass.uniform.exponential}
    There exists a couple of constant $\lambda_0, \sigma_0 > 0$ such that
    \begin{align}
        \label{eq.uniform.noise}
        \E\big[\exp(\lambda_0 |\eps|)\big] \leq \sigma_0. 
    \end{align}
\end{assumption}
In particular, any sub-Gaussian and any subexponential random variables satisfy Assumption \ref{ass.uniform.exponential}. Moreover, we assume that $f$ lies in a H\"older ball defined below. 
\begin{definition}[H\"older Class of Functions]
    Let $\beta \in (0, 1], \ F > 0,$ and $L >0.$ We denote by $\mH_\beta(F, L)$ the class of functions $f \colon \RR^d \to \RR$ such that $\|f\|_\infty \leq F,$ and for all $x, y \in \RR^d,$
    \begin{align}\label{eq.holder}
        |f(x) - f(y)| 
        \leq L\|x - y\|^{\beta}.
    \end{align}
\end{definition}
Given an i.i.d.\ $n$-sample $(X_1, Y_1), \dots, (X_n, Y_n)$ following a probability distribution $\P_{\sX, \sY}$ whose marginals satisfy \eqref{eq.model}, the goal is to construct an estimator $\wh f(\cdot, \{(X_i, Y_i)\}_{i=1}^n)$ that minimizes the Mean Squared Error (MSE) $\ell(\wh f) := \E[(\wh f(X) - Y)^2].$ It is well known that under Model \eqref{eq.model}, the MSE is minimised by the Bayes' least squares estimate, $x \mapsto \E[Y | X = x] = f(x).$ It is then standard to shift $\ell_{\Q}(\wh f)$ by the MSE at the Bayes' estimate, which leads to the $L^2(\Q_{\sX})$-norm $\mE_{\Q_{\sX}}(\wh f, f) := \ell_{\Q}(\wh f) - \ell_{\Q}(f) = \E[(\wh f(X) - f(X))^2].$ The latter is a random quantity since it implicitly depends on the sample. In this article, we study bounds in probability for $\mE_{\Q_{\sX}}(\wh f, f).$ The setting of transfer learning marks a departure from the standard risk analysis for nonparametric regression in which the data $\{(X_i, Y_i)\}_{i=1}^n$ follows the same distribution as the testing pair $(X, Y).$ Instead, we consider that the two distributions only share weaker structural characteristics, which we model by the standard Covariate Shift (CS) assumption.
\begin{assumption}[Covariate Shift]\label{ass.covariate.shift}
Let $\P_{\sX, \sY}$ and $\Q_{\sX, \sY}$ be two probability distributions defined on $\RR^d \times \RR.$ We say that $\P_{\sX, \sY}$ and $\Q_{\sX, \sY}$ satisfy the covariate shift assumption if 
\begin{align}
\label{eq.covariate.shift}
\P_{\sY|\sX} = \Q_{\sY|\sX}. \tag{CS}
\end{align}
\end{assumption}
Under \eqref{eq.covariate.shift}, the unknown function $f$ in \eqref{eq.model} is the same under $\P_{\sX, \sY}$ and $\Q_{\sX, \sY}.$ The mismatch takes effect on the level of marginal distributions $\P_{\sX}$ and $\Q_{\sX}.$ Additionally, this means that the noise variable's distribution is invariant across distributions satisfying \eqref{eq.covariate.shift}. 

Since the distribution of $Y$ is entirely characterised by the covariates' distribution $\P_{\sX},$ the unknown function $f \in \mH_\beta(F, L),$ and the noise variable $\eps,$ specifying the distributions pairs $(X, Y)$ becomes redundant. The latter two being fixed,  we now shift our focus towards the covariates' distributions $\P_{\sX}$ and $\Q_{\sX}.$\\

\noindent\textbf{Density ratio exponent.} Intuitively, the prediction risk depends on the source-target pair $(\P_{\sX}, \Q_{\sX}).$ On one hand, the transferless case, mentioned before, refers to the situation $\P_{\sX} = \Q_{\sX}.$ In our setup, this extreme case leads to the well-known minimax optimal rate $n^{-2\beta/(2\beta + d)}.$ Another extreme is reached whenever $\P_{\sX}$ and $\Q_{\sX}$ have disjoint supports. In this case, it is easy to see that the prediction risk for data $\{(X_i, Y_i)\}$ sampled solely from $\P_{\sX},$ is lower bounded by a constant, rendering transfer learning useless for large datasets. Therefore, the interest lies in characterising the behaviour of regression estimates for source-target pairs that realise a compromise between those two extremes. This compromise is quantified by a similarity measure, which, in turn, appears as a modulating parameter in the rates obtained under \eqref{eq.covariate.shift}. One challenge in the recent literature on transfer learning under covariate shift is the design of a similarity measure that induces a meaningful class of source-target pairs. Two of those similarity measures have been mentioned in Section \ref{sec.intro}. This article defines a new similarity measure called \textit{density ratio exponent}.
\begin{definition}[Density Ratio Exponent]
    \label{def.density.ratio.exponent}
    Let $\P_{\sX}, \Q_{\sX} \in \mM(\ol p),$ we say that the couple $(\P_{\sX}, \Q_{\sX})$ has a density ratio exponent $\gamma >0$ if 
    \begin{align}
        \label{eq.density.ratio.exponent}
        \int \frac{q(x)}{p(x)^\gamma}\, dx < \infty,\tag{DRE}
    \end{align}
    where $p$ and $q$ are the respective densities of $\P_{\sX}$ and $\Q_{\sX}.$
\end{definition}
We give examples of source-target pairs that admit a density ratio exponent in Example \ref{ex.exponential.1}. We are now ready to define the class of probability distributions considered in this article.
\begin{definition}
\label{def.class.proba}
Let $\ol p, a_-, r_-, a_+, r_+, \rho$ be positive constants. We define $\mP = \mP(\ol p, a_-, r_-, a_+, r_+, \rho)$ to be the class of distributions $\P_{\sX} \in \mM(\ol p),$ with density $p,$ such that
\begin{enumerate}
    \item [$(i)$]\label{ass.i} $\P_{\sX}$ satisfies a minimal mass property with parameter $a_-, r_-$. That is, for all $x \in \RR^d,$ and all $r \in (0, r_-],$
    \begin{align}
        \label{eq.minimal.mass}
        \P_{\sX}\{\B(x, r)\} \geq a_-p(x)r^d.\tag{mMP}
    \end{align}
    \item [$(ii)$]\label{ass.ii} $\P_{\sX}$ satisfies a maximal mass property with parameters $a_+, r_+.$ That is, for all $x \in \supp(\P)$ and all $r \in (0, r_+],$
    \begin{align}
        \label{eq.maximal.mass}
        \P_{\sX}\{\B(x, r)\} \leq a_+p(x)r^d.\tag{MMP}
    \end{align}
\end{enumerate}
\end{definition}
Assumptions \eqref{eq.minimal.mass} and \eqref{eq.maximal.mass} are satisfied for uniform distributions and typically, for heavy-tailed distributions $\P_{\sX}$ such that $\supp(\P_{\sX}) = \RR^d.$ Standard light-tailed distributions such as Gaussian do not satisfy \eqref{eq.minimal.mass}. If the density $p$ of $\P_{\sX}$ cancels at $x_0 \in \RR^d,$ and $p(x)$ is proportional to $\|x - x_0\|^\alpha$ in a neighbourhood of $x_0$ and some positive $\alpha,$ then $\P_{\sX}$ does not satisfy \eqref{eq.maximal.mass}. A subsequent weakening of the maximal mass property is required in order to extend our analysis to such distributions. Next, we introduce the class of target distributions associated with a density ratio exponent $\gamma$ and a source distribution $\P_{\sX, \sY}.$
\begin{definition}
    Let $\P \in \mP, \ \gamma, \rho > 0$ We denote by $\mQ(\P, \gamma, \rho)$ the class of distributions $\Q \in \mP$ such that $(\P, \Q)$ has a density ratio exponent $\gamma,$ and $\Q_{\sX}$ satisfies a pseudo-moment condition
    \begin{align}
        \label{eq.pseudo.moment}
        \int q(x)^{d/(\rho + d)}\, dx < \infty.\tag{PM}
    \end{align}
\end{definition}
Lemma \ref{lem.pseudo.moment} in the Appendix shows that \eqref{eq.pseudo.moment} is automatically satisfied if $X \sim \P_{\sX}$ admits a generalised moment of order $\rho + \epsilon$ with $\epsilon > 0.$ A generalised moment assumption is sufficient for deriving our results, and \eqref{eq.pseudo.moment} simply avoids obtaining an $\epsilon$ term in the rates. It follows immediately that if $\P_{\sX} \in \mP$ satisfies \eqref{eq.pseudo.moment} for some $\rho > 0,$ then $\P_{\sX} \in \mQ(\P_{\sX}, \gamma, \rho)$ for all $\gamma \leq \rho/(\rho + d).$ Additionally, we show in Lemma \ref{lem.ratio.bound} in the appendix that $\mQ(\P_{\sX}, \gamma, \rho) \subseteq \mQ(\P_{\sX}, \gamma', \rho)$ for all $\gamma' \leq \gamma$ and all $\rho > 0.$ Instructive examples of source-target pair that satisfy the density ratio exponent condition are given by pairs of exponential and pairs of Pareto distributions.
\begin{example}[Exponential distributions]
\label{ex.exponential.1}
For positive $\lambda,$ denote by $\mE(\lambda)$ the exponential distribution with parameter $\lambda.$ Consider $\P_{\sX} = \mE(\lambda_{\P})$ and $\Q_{\sX} = \mE(\lambda_{\Q}).$ Then, the distributions $\P_{\sX}$ and $\Q_{\sX}$ are both in the class $\mP(\lambda, \exp(-\lambda), 1, 2\sinh(\lambda)/\lambda, 1),$ and $\Q_{\sX} \in \mQ(\P_{\sX}, \gamma, \rho)$ for all $\gamma < \lambda_{\Q}/\lambda_{\P}$ and all $\rho > 0.$ A proof of these inclusions can be found in Lemma \ref{lem.example.exponential} in the appendix.
\end{example}

\begin{example}[Pareto distributions]
\label{ex.pareto.1}
    For positive $\alpha,$ denote by $\operatorname{Par}(\alpha)$ the Pareto distribution with location parameter $1$ and scale $\alpha.$ That is, if $\P_{\sX} = \operatorname{Par}(\alpha),$ then its density is $p(x) = x^{-(\alpha + 1)}\mathds{1}(x \geq 1).$ Let $\alpha_{\P}, \alpha_{\Q} > 0.$ Let $\P_{\sX} = \operatorname{Par}(\alpha_{\P}),$ then $\P_{\sX} \in \mP(1, 2, 1/2, a, 1/2)$ with $a = (3/2)^{\alpha_{\P}\vee\alpha_{\Q} + 1}.$ Furthermore, if $\Q_{\sX} = \operatorname{Par}(\alpha_{\Q}),$ then $\Q_{\sX} \in \mQ(\P_{\sX}, \gamma, \rho)$ for all $\gamma < \alpha_{\Q}/(\alpha_{\P} + 1)$ and all $\rho > \alpha_{\Q}.$ A proof of these inclusions can be found in Lemma \ref{lem.pareto} in the appendix.
\end{example}

\noindent\textbf{Local Nearest Neighbours.}\label{sec::method} To address the covariate shift problem, we need to estimate the regression function $f.$ We compare two approaches: the standard $k$-NN and the local $k$-NN. Let $n \in \NN^*$ and a neighbour function $k \colon \RR^d \to \{1, \dots, n\}.$ We define the local $k$-NN estimator as
\begin{align}
    \label{eq.weighted.knn}
    \wh f(x) = \frac 1{k(x)}\sum_{i=1}^{n} Y_i(x)\mathds{1}(i \leq k(x)).
\end{align}
In particular, taking $k$ as a constant independent of $x \in \RR^d$ leads to the standard $k$-NN estimator.
Additionally, we define the two-sample local $k$-NN. Let $\{(X_i, Y_i)\}_{i = 1}^n$ and $\{(X'_i, Y'_i)\}_{i=1}^m$ be, respectively, two sets of $n$ and $m$ i.i.d.\ observations from $\P_{\sX, \sY}$ and $\Q_{\sX, \sY}.$ Given two neighbour functions $k_{\P}$ and $k_{\Q},$ the two-sample local $(k_{\P}, k_{\Q})$-nearest neighbours estimator is defined as
\begin{align}
    \label{eq.two.sample.local.knn}
    \wh f(x) = \frac{1}{k_{\P}(x) + k_{\Q}(x)}\Bigg[\sum_{i=1}^n Y_i(x)\mathds{1}(i \leq k_{\P}(x)) + \sum_{j=1}^m Y'_j(x)\mathds{1}(j \leq k_{\Q}(x))\Bigg].
\end{align}
The standard $(k_{\P}, k_{\Q})$-NN is obtained by choosing constant neighbour functions $k_{\P}$ and $k_{\Q}.$

\section{Theoretical Results} \label{sec.results}
We state here our two theorems. For the sake of simplicity, we omit the multiplicative constants that do not depend on the sample sizes. The complete statements can be found in the appendix, see Theorem \ref{th.rates.standard.knn.proof}, Theorem \ref{th.transfer.local.knn.proof}, Theorem \ref{th.transfer.two.samples.fixed.k.proof}, and Theorem \ref{th.rates.local.k.two.sample.proof}. Let $\gamma, \rho > 0.$ Consider $\P_{\sX} \in \mP, \ \Q_{\sX} \in \mQ(\P_{\sX}, \gamma, \rho),$ and work under the setting of Model \eqref{eq.model} under \eqref{eq.covariate.shift}.\\

\noindent\textbf{Standard one- and two-sample nearest neighbours.}
\begin{theorem}
    \label{th.transfer.standard.knn}
    Let $r_T := \gamma/(\gamma + 1) \wedge 2\beta/(2\beta + d)$ and $r_M := \rho/(2\rho + d)\wedge 2\beta/(2\beta + d).$ Consider arbitrary positive constants $\k_{\P}$ and $\k_{\Q}.$
    \begin{enumerate}
        \item [$(i)$] If $\wh f$ is the one-sample standard $k_{\P}$-NN \eqref{eq.weighted.knn} with $k_{\P}$ defined as
        \begin{align*}
            k_{\P} = \big\lceil \k_{\P} \log(n)^{1 - r_T}n^{r_T}\big\rceil,
        \end{align*}
        then, for all $\tau > 0,$ there exists $N = N(r_T, \k_{\P}) \geq 2d \vee 3,$ and a constant $C_1 > 0,$ such that for all $n \geq N,$ 
        \begin{align*}
            \mE_{\Q_{\sX}}(\wh f, f) &\leq C_1\bigg(\frac{\log n}{n}\bigg)^{r_T}
        \end{align*}
        with probability at least $1 - 2n^{-\tau}$.
        \item [$(ii)$] If $\wh f$ is a two-sample $(k_{\P}, k_{\Q})$-NN \eqref{eq.two.sample.local.knn} with $k_{\P}$ and $k_{\Q}$ defined as
        \begin{align*}
            k_{\P} &= \big\lceil \k_{\P} \log(n + m)^{1 -r_T}n^{r_T}\big\rceil\\
            k_{\Q} &= \big\lceil \k_{\Q}\log(n + m)^{1 - r_M}m^{r_M}\big\rceil.
        \end{align*}
        then, for all $\tau > 0,$ there exists $N = N(r_T, \k_{\P}) \geq 2d, \ M = M(r_M, \k_{\Q}) \geq 2d$ and a constant $C_2 > 0$ such that for all $n \geq N$ and all $m\geq M,$ 
        \begin{align*}
            \mE_{\Q}(\wh f, f) \leq C_2\Bigg(\bigg(\frac{n}{\log(n + m)}\bigg)^{r_T} + \bigg(\frac{m}{\log(n + m)}\bigg)^{r_M}\Bigg)^{-1},
        \end{align*}
        with probability at least $1 - 2n^{-\tau} - 2m^{-\tau}$.
    \end{enumerate}
\end{theorem}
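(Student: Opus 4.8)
Both parts rest on the same template, so I describe $(ii)$; part $(i)$ is the special case where the target sample is discarded (take $k_\Q\equiv 0$). Conditioning on the training data, $\mE_{\Q}(\wh f,f)=\int(\wh f(x)-f(x))^2\,d\Q_\sX(x)$, and the plan is to use the bias--variance split
\begin{align*}
\wh f(x)-f(x)=\underbrace{\frac 1K\Big(\sum_{i=1}^{k_\P}(f(X_i(x))-f(x))+\sum_{j=1}^{k_\Q}(f(X'_j(x))-f(x))\Big)}_{=:\,b(x)}+\underbrace{\frac 1K\Big(\sum_{i=1}^{k_\P}\eps_i(x)+\sum_{j=1}^{k_\Q}\eps'_j(x)\Big)}_{=:\,v(x)},
\end{align*}
where $K:=k_\P+k_\Q$, the neighbours $X_i(x)$ and $X'_j(x)$ are taken within the source and target samples respectively, and $\eps_i(x),\eps'_j(x)$ are their noise variables. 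Since $(\wh f(x)-f(x))^2\le 2b(x)^2+2v(x)^2$, it suffices to bound $\int b^2\,d\Q_\sX$ and $\int v^2\,d\Q_\sX$, each on an event of probability at least $1-n^{-\tau}-m^{-\tau}$.

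For the variance part I would condition on the pooled design. For fixed $x$, $v(x)$ is an average of $K$ i.i.d.\ centred noise variables --- i.i.d.\ across both samples by \eqref{eq.covariate.shift} and subexponential by Assumption \ref{ass.uniform.exponential} --- so Bernstein's inequality gives $|v(x)|\lesssim\sqrt{\log(n+m)/K}$ with polynomially small failure probability, using $K\gtrsim n^{r_T}\vee m^{r_M}\gg\log(n+m)$ so that the linear (subexponential) term is dominated. The point is that $x\mapsto\big(\{X_i(x):i\le k_\P\},\{X'_j(x):j\le k_\Q\}\big)$ is piecewise constant with at most polynomially many (in $n+m$) pieces --- the arrangement of the pairwise bisecting hyperplanes of the pooled sample refines them --- so a union bound over configurations yields $\sup_x|v(x)|^2\lesssim\log(n+m)/K$, whence $\int v^2\,d\Q_\sX\lesssim\log(n+m)/K$ since $\Q_\sX$ is a probability measure. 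Writing $\ell:=\log(n+m)$, the prescribed neighbour counts satisfy $k_\P/\ell\asymp(n/\ell)^{r_T}$ and $k_\Q/\ell\asymp(m/\ell)^{r_M}$, so $\log(n+m)/K\asymp\big((n/\ell)^{r_T}+(m/\ell)^{r_M}\big)^{-1}$, the claimed rate. (For $(i)$, $K=k_\P$ and this is $\asymp(\log(n)/n)^{r_T}$.)

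For the bias part, the H\"older bound and $\|f\|_\infty\le F$ give $|b(x)|\le\frac{k_\P}{K}\big(LR_{k_\P}(x)^\beta\wedge 2F\big)+\frac{k_\Q}{K}\big(LR'_{k_\Q}(x)^\beta\wedge 2F\big)$, where $R_{k_\P}(x)$ and $R'_{k_\Q}(x)$ denote the $k_\P$-th source and $k_\Q$-th target neighbour distances; it is essential here to retain the convex weights $k_\P/K$ and $k_\Q/K$ rather than crudely bounding them by $1$. Hence $\int b^2\,d\Q_\sX\lesssim(k_\P/K)^2I_\P+(k_\Q/K)^2I_\Q$ with $I_\P:=\int\big(R_{k_\P}(x)^{2\beta}\wedge 1\big)\,d\Q_\sX$ and $I_\Q$ its target analogue. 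To control $I_\P$ I would use a relative Vapnik--Chervonenkis inequality for Euclidean balls (VC dimension $O(d)$): on an event of probability $\ge 1-n^{-\tau}$, for every $x$ with $\P_\sX\{\B(x,r_-\wedge r_+)\}\ge Ck_\P/n$ the source empirical mass of the ball around $x$ of $\P_\sX$-mass $Ck_\P/n$ is at least $k_\P/n$, so that $R_{k_\P}(x)^d\le Ck_\P/(a_-\,n\,p(x))$ by \eqref{eq.minimal.mass}; on this ``good'' set \eqref{eq.maximal.mass} moreover gives $p(x)\gtrsim k_\P/n$, whereas on its complement \eqref{eq.minimal.mass} forces $p(x)\lesssim k_\P/n$. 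Splitting $I_\P$ along this dichotomy --- bounding $R_{k_\P}(x)^{2\beta}$ by $\big(Ck_\P/(a_-\,n\,p(x))\big)^{2\beta/d}$ on the good set and the integrand by $1$ on the bad set --- and then invoking $\int p^{-\gamma}\,d\Q_\sX<\infty$ from \eqref{eq.density.ratio.exponent} (together with $\mQ(\P,\gamma,\rho)\subseteq\mQ(\P,2\beta/d,\rho)$ when $2\beta/d\le\gamma$, the bound $p\gtrsim k_\P/n$ on the good set when $2\beta/d>\gamma$, and the Markov bound $\Q_\sX\{p<t\}\le t^\gamma\int p^{-\gamma}\,d\Q_\sX$ on the bad set) yields $I_\P\lesssim(k_\P/n)^{\gamma\wedge(2\beta/d)}\asymp(\ell/n)^{r_T}$. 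Running the identical argument with $\Q_\sX$ in both the sampling and the integrating roles, replacing the density ratio exponent by the pseudo-moment bound $\int q^{d/(\rho+d)}<\infty$ of \eqref{eq.pseudo.moment}, gives $I_\Q\lesssim(k_\Q/m)^{(\rho/(\rho+d))\wedge(2\beta/d)}\asymp(\ell/m)^{r_M}$. Finally, since $k_\P/K=(n/\ell)^{r_T}\big/\big((n/\ell)^{r_T}+(m/\ell)^{r_M}\big)$ and symmetrically for $k_\Q/K$, we get $(k_\P/K)^2I_\P\lesssim(n/\ell)^{r_T}\big((n/\ell)^{r_T}+(m/\ell)^{r_M}\big)^{-2}\le\big((n/\ell)^{r_T}+(m/\ell)^{r_M}\big)^{-1}$ and likewise for the target term, so $\int b^2\,d\Q_\sX\lesssim\big((n/\ell)^{r_T}+(m/\ell)^{r_M}\big)^{-1}$. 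Combining this with the variance bound and taking $N,M$ large enough (in terms of $\tau$ and the model constants) for all the ``$n,m$ large'' requirements to hold proves $(ii)$; $(i)$ follows by dropping the target sample, where only \eqref{eq.density.ratio.exponent} enters and $r_M$ plays no role.

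The delicate steps will be the two places where uniformity over $x$ cannot be avoided: the Vapnik--Chervonenkis control of the neighbour distances --- unavoidable because $\mE_{\Q_\sX}(\wh f,f)$ averages over an independent test point yet a high-probability (not in-expectation) bound is required --- and the bias integral, where one must correctly locate the threshold $t\asymp k_\P/n$ (resp.\ $k_\Q/m$) separating the region on which the neighbour distance has the ``ideal'' order $(k/(n\,p(x)))^{1/d}$ from the tail region, on which one must instead spend the density ratio exponent (resp.\ the pseudo-moment) to bound the $\Q_\sX$-mass. The exponents $\gamma/(\gamma+1)$ and $\rho/(2\rho+d)$ in $r_T$ and $r_M$ come out precisely of balancing these two regions against the variance, and keeping the weights $k_\P/K,k_\Q/K$ throughout is exactly what makes the source and target contributions each bounded by the joint rate.
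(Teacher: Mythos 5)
Your plan is correct, and part $(i)$ is essentially the paper's argument: bias--variance split, uniform VC control of the nearest-neighbour radius $R_{k_{\P}}$ (here via Proposition \ref{prop.Bernstein.general} and Proposition \ref{prop.neighbours.distance.bound.zeta}), a threshold split of $\Q_{\sX}$ on a high-density region $\{p \gtrsim k_{\P}/n\}$, and the density ratio exponent to control both the integral of $(k_{\P}/(n p))^{2\beta/d}$ on that region and the $\Q_{\sX}$-mass of its complement. The balancing of $(k_{\P}/n)^{u_T}$ against $\log n/k_{\P}$, with $u_T = \gamma\wedge 2\beta/d$, is exactly how $r_T = u_T/(u_T+1)$ appears in the paper. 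One small implementation difference in this step: you define the ``good set'' by the mass condition $\P_{\sX}\{\B(x, r_-\wedge r_+)\}\gtrsim k_{\P}/n$ and invoke \eqref{eq.maximal.mass} to recover the density lower bound; the paper works directly with the density-based set $\Delta_n^{\P}(c_u, k)$ and never needs \eqref{eq.maximal.mass} in the one-sample standard theorem. Both are valid since $\mP$ assumes both mass properties, but your route spends an extra assumption.

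For part $(ii)$ you take a mildly different route. You split $\wh f - f = b + v$ for the pooled estimator and bound $\int b^2\,d\Q_{\sX}$ and $\int v^2\,d\Q_{\sX}$ from scratch, tracking the weights $k_{\P}/K, k_{\Q}/K$ throughout; the harmonic-mean rate comes out of the cancellation $(k_{\P}/K)^2 I_{\P} \lesssim (n/\ell)^{r_T}\big((n/\ell)^{r_T}+(m/\ell)^{r_M}\big)^{-2}$. The paper instead applies the pointwise Jensen inequality to the convex combination $\wh f = w_{\P}\wh f_{\P} + w_{\Q}\wh f_{\Q}$, giving $(\wh f(x)-f(x))^2 \le w_{\P}(\wh f_{\P}(x)-f(x))^2 + w_{\Q}(\wh f_{\Q}(x)-f(x))^2$; because $w_{\P}, w_{\Q}$ are deterministic constants this passes through the $\Q_{\sX}$-integral to $\mE_{\Q}(\wh f, f) \le w_{\P}\,\mE_{\Q}(\wh f_{\P}, f) + w_{\Q}\,\mE_{\Q}(\wh f_{\Q}, f)$, which is then two black-box invocations of the one-sample theorem (with $\log n$ replaced by $\log(n+m)$) and the observation that $w_R\cdot(\log(n+m)/n_R)^{r_R} \asymp \log(n+m)/K$ for $R\in\{\P,\Q\}$. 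Your route does more work but obtains the same rate and probability; the paper's reuses part $(i)$ without redoing any concentration estimates. Your emphasis that the weights must not be bounded crudely by $1$ is exactly the right observation --- the paper's convexity inequality is the cleanest way to respect them.
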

In what follows, the exponent of $\log n/n$ is called the source rate, while the exponent of $\log m/m$ is called the target rate. Theorem \ref{th.transfer.standard.knn} shows that the minimax optimal rate $2\beta/(2\beta + d)$ is obtained, up to a $\log$ factor, for the source rate whenever $\gamma \geq 2\beta/d,$ and for the target rate whenever $\rho/(\rho + d) \geq 2\beta/d.$ Hence, we observe two regimes for each of the source rate and the target rate.\\

\noindent\textbf{Local one- and two-sample nearest neighbours.} We now consider the local nearest neighbours regressor. The aim here is to improve the prediction risk by choosing a neighbour function that balances bias and variance in a pointwise manner. In this context, our choice of neighbours depends on the density of the data. Hence, a good selection of $k$ involves estimating the covariates' densities. We introduce the $\ell$-nearest neighbours density estimator. 
\begin{definition}
    \label{def.density.estimator}
    Let $\P$ be an absolutely continuous distribution on $(\RR^d, \mB(\RR^d))$ and denote by $p$ its density.  Let $\ell \in \{1, \dots, n\}.$ We denote by $\wh p$ the $\ell$-nearest neighbours density estimator of $p$, which expression is given, at $x \in \RR^d,$ by \begin{align}\label{eq.density.estimator}
    \wh p(x):= \frac{\P_n(\B(x, R_{\ell}(x)))}{R_{\ell}(x)^d} = \frac{\ell}{nR_\ell(x)^d},
    \end{align}
    where $\P_n$ denotes the empirical distribution associated with the sample $(X_1, \dots, X_n).$
\end{definition}

We show, in the appendix, that the $\ell$-NN density estimator achieves $\wh p(x) \asymp p(x)$ uniformly over a high-density region $\{x : p(x) \gtrsim \ell/n\}.$ This holds under the minimal and maximal mass assumptions and allows us to obtain the following rates for the local $k$-NN regressor.
\begin{theorem}
    \label{th.transfer.local.knn}
    Let $r_T = 2\beta/(2\beta + d) \wedge \gamma$ and $r_M = \rho/(\rho + d) \wedge 2\beta/(2\beta + d).$ Consider arbitrary positive constant $\k_{\P}$ and $\k_{\Q}.$ 
    \begin{enumerate}
        \item [$(i)$] If $\wh f$ is the one-sample local $k_{\P}$-NN regressor defined by \eqref{eq.weighted.knn} with $k_{\P}$ defined as
        \begin{align*}
            k_{\P}(x) := n\wedge \big\lceil \kappa\log(n)^{d/(2\beta + d)}(n\wh p(x))^{2\beta/(2\beta + d)}\big\rceil \vee \big\lceil \log n \big\rceil,
        \end{align*}
        and $\wh p$ is the $\ell$-NN density estimator of $p$ with $\ell \asymp \log n,$ then, for all $\tau > 1,$ there exists an $N > 0$ and a constant $C_3 > 0$ such that for all $n \geq N,$
        \begin{align*}
            \mE_{\Q}(\wh f, f) &\leq C_3\bigg(\frac{\log n}n\bigg)^{r_T}
        \end{align*}
         with probability at least $1 - 3n^{1- \tau}.$
        \item [$(ii)$] If $\wh f$ is the two-sample local $(k_{\P}, k_{\Q})$-NN regressor defined by \eqref{eq.two.sample.local.knn} with $k_{\P}$ and $k_{\Q}$ defined as
        \begin{align*}
            k_{\P}(x) &:= n\wedge \big\lceil \kappa\log(n + m)^{d/(2\beta + d)}(n\wh p(x))^{2\beta/(2\beta + d)}\big\rceil \vee \big\lceil \log n \big\rceil\\
            k_{\Q}(x) &:= m\wedge \big\lceil \kappa\log(n + m)^{d/(2\beta + d)}(m\wh q(x))^{2\beta/(2\beta + d)}\big\rceil \vee \big\lceil \log m \big\rceil,
        \end{align*}
        where $\wh p$ and $\wh q$ are the $\ell_{\P}$ and $\ell_{\Q}$ nearest neighbour density estimators of $q$ and $p$ respectively, with $\ell_{\P} = \ell_{\Q} \asymp \log(n + m),$ then, for all $\tau > 1,$ there exists $N \geq 2d, \ M \geq 2d$ and $C_4 > 0$ such that for all $\tau > 0, n \geq N, m \geq M,$ we have $n\wedge m \gtrsim \log(n+m),$ and
    \begin{align*}
        \mE_{\Q}(\wh f, f) \leq C_4\Bigg(\bigg(\frac{n}{\log n}\bigg)^{r_T} + \bigg(\frac{m}{\log m}\bigg)^{r_M}\Bigg)^{-1},
    \end{align*}
    with probability at least $1 - 3n^{1- \tau} - 3m^{1- \tau}.$
    \end{enumerate}
\end{theorem}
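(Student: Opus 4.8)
The plan is to reduce part $(ii)$ to a pointwise bias--variance analysis of $\wh f$ and then integrate the pointwise error against $\Q_{\sX}$, using the density ratio exponent \eqref{eq.density.ratio.exponent} and the pseudo-moment bound \eqref{eq.pseudo.moment} to tame the tail contributions; part $(i)$ is recovered by taking $m=0$ and $k_{\Q}\equiv 0$. Write $s:=2\beta/(2\beta+d)$. I would first condition on all covariates $X_1,\dots,X_n,X'_1,\dots,X'_m$, which freezes $\wh p$, $\wh q$, the neighbour counts $k_{\P}$, $k_{\Q}$ and the identities of all nearest neighbours, leaving the errors $\eps_i,\eps'_j$ i.i.d., centred and subject to Assumption \ref{ass.uniform.exponential}. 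Decomposing $\wh f(x)-f(x)$ into a (now deterministic) bias part and a centred noise part, the H\"older bound \eqref{eq.holder} controls each bias increment by $L R_i(x)^{\beta}$ for a source neighbour and $L R'_j(x)^{\beta}$ for a target neighbour — or crudely by $2F$ via $\|f\|_\infty\le F$ — while a Bernstein/Chernoff estimate together with a union bound over the polynomially many possible $k$-nearest-neighbour configurations (and over $k$) controls the noise part by $\sqrt{\log(n+m)/(k_{\P}(x)+k_{\Q}(x))}$, uniformly in $x$, on an event of probability at least $1-3n^{1-\tau}-3m^{1-\tau}$. The truncations $\vee\lceil\log n\rceil$ and $\vee\lceil\log m\rceil$ are exactly what guarantee $k_{\P}(x)+k_{\Q}(x)\gtrsim\log(n+m)$, so that this subexponential deviation behaves like a sub-Gaussian one. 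Squaring and applying Jensen's inequality to the weighted average of increments gives, uniformly in $x$,
\[
    \big(\wh f(x)-f(x)\big)^2 \;\lesssim\; \frac{k_{\P}(x)\,R_{k_{\P}(x)}(x)^{2\beta}+k_{\Q}(x)\,R'_{k_{\Q}(x)}(x)^{2\beta}}{k_{\P}(x)+k_{\Q}(x)} \;+\; \frac{\log(n+m)}{k_{\P}(x)+k_{\Q}(x)},
\]
where $R_{k_{\P}(x)}(x)$ and $R'_{k_{\Q}(x)}(x)$ abbreviate the source and target nearest-neighbour distances actually used at $x$.

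Next I would convert nearest-neighbour distances into densities. On the high-density regions $\{p\gtrsim\log(n+m)/n\}$ and $\{q\gtrsim\log(n+m)/m\}$, the uniform consistency of the $\ell$-NN density estimator discussed after Definition \ref{def.density.estimator} gives $\wh p\asymp p$ and $\wh q\asymp q$; combined with the minimal and maximal mass properties \eqref{eq.minimal.mass}--\eqref{eq.maximal.mass} this yields $R_{k_{\P}(x)}(x)^{d}\asymp k_{\P}(x)/(np(x))$ and the analogue for $\Q$. Substituting the explicit $k_{\P}(x)$, $k_{\Q}(x)$ (noting that $\|p\|_\infty\le\ol p$ makes the truncation $\wedge n$ inactive for large $n$, while $\vee\lceil\log n\rceil$ binds precisely at the boundary of the high-density region) gives $R_{k_{\P}(x)}(x)^{2\beta}\asymp\big(\log(n+m)/(np(x))\big)^{s}$ and, crucially, $k_{\P}(x)\,R_{k_{\P}(x)}(x)^{2\beta}\asymp\log(n+m)$, and likewise for $\Q$. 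Plugged into the display, the two bias terms are then each $\asymp\log(n+m)/(k_{\P}(x)+k_{\Q}(x))$, so the squared bias never exceeds the variance, and the pointwise error on the ``doubly high-density'' region is $\lesssim\min\big\{(\log(n+m)/(np(x)))^{s},\,(\log(n+m)/(mq(x)))^{s}\big\}$. On the complement of that region at least one neighbour count is clamped to a $\log$, and there I would simply use $|f(X_i(x))-f(x)|\le 2F$ to get the crude bound $(\wh f(x)-f(x))^2\lesssim 1$; this is enough because the $\Q_{\sX}$-mass of that region is small.

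It then remains to integrate against $q$. Splitting according to whether $np(x)\ge mq(x)$: on $\{np(x)\ge mq(x)\}$ the minimum above is $\le(\log(n+m)/(np(x)))^{s}$, and I would apply \eqref{eq.density.ratio.exponent} directly when $s\le\gamma$, or — when $s>\gamma$ — first restrict to $\{p\gtrsim\log(n+m)/n\}$ and bound $p^{-s}\le p^{-\gamma}(\log(n+m)/n)^{\gamma-s}$ there before applying \eqref{eq.density.ratio.exponent}; either way the contribution is $\lesssim(\log(n+m)/n)^{r_T}$ with $r_T=s\wedge\gamma$. On $\{np(x)<mq(x)\}$ the minimum is $\le(\log(n+m)/(mq(x)))^{s}$, and since $q^{1-s}=q^{d/(2\beta+d)}$ I would invoke \eqref{eq.pseudo.moment}: this is integrable when $\rho\ge 2\beta$ (then $r_M=s$), and otherwise I would restrict to $\{q\gtrsim\log(n+m)/m\}$ and trade the excess negative power of $q$ for a power of $\log(n+m)/m$, obtaining $\lesssim(\log(n+m)/m)^{r_M}$ with $r_M=s\wedge\rho/(\rho+d)$. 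Finally the two low-density regions contribute only $\Q_{\sX}\{p\lesssim\log(n+m)/n\}+\Q_{\sX}\{q\lesssim\log(n+m)/m\}$, which \eqref{eq.density.ratio.exponent} and \eqref{eq.pseudo.moment} bound respectively by $(\log(n+m)/n)^{\gamma}$ and $(\log(n+m)/m)^{\rho/(\rho+d)}$ — both dominated by the rates just obtained. Collecting everything yields $\mE_{\Q}(\wh f,f)\lesssim(\log(n+m)/n)^{r_T}+(\log(n+m)/m)^{r_M}$, which is $\asymp\big((n/\log n)^{r_T}+(m/\log m)^{r_M}\big)^{-1}$ once one notes that $n\wedge m\gtrsim\log(n+m)$ — hence $\log n\asymp\log m\asymp\log(n+m)$ — automatically for $n\ge N$, $m\ge M$ large.

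I expect the main obstacle to be twofold. First, establishing the \emph{uniform}-in-$x$ control of $\wh p$, $\wh q$ and of the random radii $R_{k_{\P}(x)}(x)$, $R'_{k_{\Q}(x)}(x)$ over the \emph{unbounded} domain $\RR^d$, rather than a compact set: the standard covering arguments have to be replaced by ones driven by \eqref{eq.minimal.mass}--\eqref{eq.maximal.mass} that stay valid far out in the tails, and one must be careful that $k_{\P}(x)$ is itself a measurable function of the source sample. Second, the tail integration, where the heavy-tailed mismatch between $\P_{\sX}$ and $\Q_{\sX}$ must be captured exactly by $\gamma$ and $\rho$; the delicate parts are the case splits according to whether $s\le\gamma$ and whether $\rho\ge 2\beta$, and the bookkeeping showing that the low-density remainders never dominate. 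The identity $k_{\P}(x)\,R_{k_{\P}(x)}(x)^{2\beta}\asymp\log(n+m)$ — i.e.\ the chosen $k$'s balance bias against variance pointwise \emph{for each sample separately} — is the structural fact that turns the naive sum of the source and target rates into the minimum encoded by the harmonic-sum form of the bound.
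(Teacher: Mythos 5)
Your overall framework — pointwise bias--variance decomposition with local neighbour counts calibrated by $\ell$-NN density estimation, followed by integration against $q$ using \eqref{eq.density.ratio.exponent} and \eqref{eq.pseudo.moment} — matches the paper's. But the bound you derive is the wrong shape, and this is not a cosmetic issue.

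First, your closing claim that $(\log(n+m)/n)^{r_T}+(\log(n+m)/m)^{r_M}\asymp\big((n/\log n)^{r_T}+(m/\log m)^{r_M}\big)^{-1}$ is false. Writing $a=(n/\log n)^{r_T}$, $b=(m/\log m)^{r_M}$, the left side is $a^{-1}+b^{-1}\asymp\max(a,b)^{-1}$, the right side is $(a+b)^{-1}\asymp\min(a,b)^{-1}$; these differ by the (unbounded) factor $\max(a,b)/\min(a,b)$. On the doubly-high-density region this is easily repaired: instead of splitting the integral along $\{np\ge mq\}$ and adding the two pieces, bound $\int_\Delta\min(A,B)\,d\Q\le\big(\int_\Delta A\,d\Q\big)\wedge\big(\int_\Delta B\,d\Q\big)$ directly — this is precisely what the paper does, and it yields the $\min$ form rather than the $\max$ form.

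Second, and more seriously, the crude treatment of the mixed low-density region $\Gamma_{\Q}:=\Delta_{\P}^c\cap\Delta_{\Q}$ (where $p$ is small but $q$ is large) breaks. You bound $(\wh f-f)^2\lesssim 1$ there and then $\Q_{\sX}\{\Delta_{\P}^c\}\lesssim(\log(n+m)/n)^{\gamma}$ via \eqref{eq.density.ratio.exponent}, observing correctly that this is $\lesssim(\log(n+m)/n)^{r_T}$. But the theorem requires the error to also be $\lesssim(\log m/m)^{r_M}$ when $m\gg n$, and $(\log(n+m)/n)^{\gamma}$ is not dominated by $(\log(n+m)/m)^{r_M}$ in that regime — $\Gamma_{\Q}$ may well carry $\Q_{\sX}$-mass vastly exceeding the target rate. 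The fix in the paper is to not throw away the weight: on $\Gamma_{\Q}$ the estimator automatically downweights the unreliable source sample since $k_{\Q}$ is comparatively large, so one bounds $k_{\P}(x)/(k_{\P}(x)+k_{\Q}(x))\le 1\wedge(k_{\P}(x)/k_{\Q}(x))$ and controls the ratio through the nearest-neighbour radii — Lemma \ref{lem.bounded.below.zeta.low.density} lower-bounds $R^{\P}_{\ell_{\P}}(x)$ away from zero on low-$p$ regions, while Lemma \ref{lem.bound.neighbours.density} upper-bounds $R^{\Q}_{\ell_{\Q}}(x)^d\lesssim\ell_{\Q}/(mq(x))$ on $\Delta_{\Q}$. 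Integrating then gives $\int_{\Gamma_{\Q}}k_{\P}/(k_{\P}+k_{\Q})\,d\Q\lesssim(\log(n+m)/n)^{\gamma}\wedge(\log(n+m)/m)^{r_M}$, which is the $\min$ you need. You do hint at the correct structural intuition at the end ("turns the naive sum ... into the minimum"), but the mechanism producing the minimum is not the identity $k_{\P}R_{k_{\P}}^{2\beta}\asymp\log(n+m)$ — it is the relative weighting $k_{\P}/(k_{\P}+k_{\Q})$ on the mixed regions, which your crude bound discards.
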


\begin{wrapfigure}{r}{0.4\textwidth}
  \begin{center} 
    \includegraphics[width=0.39\textwidth]{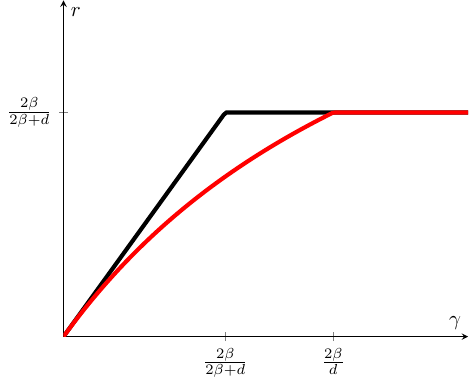}
  \end{center}
\vspace{-.5cm}
  \caption{\label{fig.transfer.rates} Comparison of the rates of the one-sample standard (red) and local (black) regressors. $r$ represents the source rate as a function of $\gamma$ in Theorem \ref{th.transfer.standard.knn} and Theorem \ref{th.transfer.local.knn}.}
\end{wrapfigure}

Comparing Theorem \ref{th.transfer.standard.knn} with Theorem \ref{th.transfer.local.knn}, we can see that the convergence rates of local $k$-NN with $2\beta/(2\beta+d) < \gamma \leq 2\beta/d$ are strictly faster than that of standard $k$-NN. The same remark applies to the target rates whenever $2\beta/(2\beta + d) \leq \rho/(\rho + d) \leq 2\beta/d.$ Figure \ref{fig.transfer.rates} shows how the rates of the standard $k$-NN differ from those of the local $k$-NN as a function of $\gamma.$ This shows that a local adaptation to the covariates' density significantly improves the standard estimator.\\
\indent Interestingly, the best local regressor is obtained by using heavily under-smoothed density estimators, that is, by taking $\ell$ of the order $\log n.$ This choice of $\ell$ ensures that $\wh p/p$ is both upper- and lower-bounded by positive constants on the largest possible subset of $\RR^d.$ It is likely that a non-pointwise analysis would reveal that $\ell = 1$ is an even better choice.\\
\indent In the transferless case, i.e.\ $\Q_{\sX} = \P_{\sX} \in \mP,$ the density ratio exponent is $\gamma = \rho/(\rho+d)$. If $\rho \geq 2\beta$, Theorem \ref{th.transfer.local.knn} yields the convergence rate $(\log n/n)^{2\beta/(2\beta+d)}$, which is optimal up to a factor $\log n$. Theorem 2 of \cite{Kohler2009OptimalGR}, shows that the rate $n^{-2\beta / (2\beta+d)}$ can not be achieved when $\rho < 2\beta$ without additional assumptions. Their counterexample is obtained for covariates' densities of the form $p(x) \propto 1/(1 + \|x\|)^\alpha$ with $\alpha > 1.$ The corresponding distributions satisfy our working assumptions. Therefore, the lower bound holds in our case. This means that the phase transition threshold $\rho/(\rho + d) = 2\beta/(2\beta + d)$ is optimal under our assumptions.\\
\indent The following two examples showcase the results of Theorem \ref{th.transfer.local.knn} in the case of source-target pairs of exponential and Pareto distributions.
\begin{example}[Rates for exponential source-target pairs]
    Example \ref{ex.exponential.1} shows that we can apply Theorem \ref{th.transfer.local.knn} with $\gamma < \lambda_{\Q}/\lambda_{\P}$ and $\rho$ arbitrarily large. We choose $\rho > 2\beta.$ The source rates are optimal if $\lambda_{\Q}/\lambda_{\P} > 2\beta/(2\beta + 1).$ This implies two facts. On the one hand, any data from $\P_{\sX}$ with heavier tails $\lambda_{\P} < \lambda_{\Q}$ always leads to minimax optimal rates. On the other hand, data samples from a source distribution with lighter tails $\lambda_{\P} > \lambda_{\Q}$ still allow for optimal rates if $\lambda_{\P}$ is not too large compared to $\lambda_{\Q}.$ The latter fact is surprising since one would expect that $\P_{\sX}$ would not provide sufficient information in the tails of $\Q_{\sX}.$\\
\end{example}
\begin{example}[Rates for Pareto source-target pairs]
    Let $\alpha_{\P}, \alpha_{\Q} > 0.$ Let $\P_{\sX} = \operatorname{Par}(\alpha_{\P}),$ and $\Q_{\sX} = \operatorname{Par}(\alpha_{\Q}).$ Example \ref{ex.pareto.1} shows that Theorem \ref{th.transfer.local.knn} is applicable and we obtain the rates
    \begin{align*}
        r_T = \frac{2\beta}{2\beta + 1} \wedge \frac{\alpha_{\Q}}{\alpha_{\P} + 1}, \text{ and } r_M = \frac{2\beta}{2\beta + 1}\wedge \frac{\alpha_{\Q}}{\alpha_{\Q} + 1}.
    \end{align*}
    Optimal source rates are obtained for $\alpha_{\Q} > (\alpha_{\P} + 1)2\beta/(2\beta + 1)$ and optimal target rates are obtained for $\alpha_{\Q} > 2\beta.$ Once again, optimal source rates are obtainable even if the source distribution has lighter tails than the target. Moreover, if $\alpha_{\Q} \geq 2\beta,$ then any choice of $\alpha_{\P} \leq 2\beta$ will lead to optimal rates.
\end{example}

\section{Outline of the proof}
\label{sec.proof.outline}

The proof relies on a uniform pointwise analysis of the error associated with the local nearest neighbours regressor $\wh f.$ Considering $\ol f \colon x \mapsto \E[\wh f(x)| X^n],$ the following bias-variance decomposition holds for all $x \in \RR^d,$
\begin{align*}
    \big|\wh f(x) - f(x)\big| &\leq \underbrace{\big|\wh f(x) - \ol f(x)\big|}_{\text{Variance}} + \underbrace{\big|\ol f(x) - f(x)\big|}_{\text{Bias}}.
\end{align*}
Using the smoothness and the boundedness of $f \in \mH_\beta(L, F),$ the bias term is bounded by $LR_{k(x)}(x)^{\beta}\wedge 2F.$ The nearest neighbour radius $R_{k(x)}(x)$ is then related to a deterministic quantity that depends only on the design distribution. Let $h>0$ and define the function $\zeta_h \colon x \mapsto \inf\{t > 0: \P\{\B(x, t)\} \geq h\}.$ Picking $h = h(x) \asymp k(x)/n, \ k \colon x \mapsto \{\lceil \log n\rceil, \dots, n\},$ and exploiting the fact that the set of closed balls in $\RR^d$ is Vapnik-Chervonenkis (VC) subgraph, Proposition \ref{prop.neighbours.distance.bound.zeta} shows that
\begin{align*}
    \sup_{x \in \RR^d}\frac{R_{k(x)}(x)}{\zeta_{h(x)}(x)} \leq 1
\end{align*}
holds with high probability. The minimal mass property of $\P_{\sX}$ implies that $\zeta_h(x) \lesssim (k(x)/np(x))^{1/d},$ for all $x$ in $\Delta_n := \{x \in \RR^d: p(x) \gtrsim k(x)/n\}.$ Hence, with high probability, for $x \in \Delta_n,$
\begin{align*}
    |f(x) - \ol f(x)| &\lesssim \bigg(\frac{k(x)}{np(x)}\bigg)^{\beta/d},
\end{align*}
Next, we deal with the variance term, which reads
\begin{align}
    \label{eq.variance.pointwise}
	|\wh f(x) - \ol f(x)| &= \frac 1{k(x)}\sum_{i=1}^{k(x)} \big|Y_i(x) - f(X_i(x))\big|.
\end{align}
Given $x \in \RR^d$ and a realisation of $X^n = (X_1, \dots, X_n),$ the uniform exponential assumption \eqref{eq.uniform.noise} together with Theorem 12.2 in \cite{Biau2015Lectures} provides an exponential bound on the tail of the sum in \eqref{eq.variance.pointwise}. Conditionally on the sample $X^n,$ any given $x \in \RR^d$ can be associated with a permutation $\sigma_x$ of $\{1, \dots, n\}$ that satisfies $(X_1(x), \dots, X_n(x)) = (X_{\sigma_x(1)}, \dots, X_{\sigma_x(n)}).$ Let the $\mW = \{\sigma_x : x \in \RR^d\}$ be the set of permutations that are obtainable when $x$ describes $\RR^d.$ The Milnor-Thom theorem (see Theorem 12.2 in \cite{Biau2015Lectures}) shows that the cardinality of $\mW$ grows polynomially in $n.$ Upon applying the union bound and taking the expectation with respect to the sample, we are left with
\begin{align*}
    \big|\wh f(x) - \ol f(x)\big| \lesssim \sqrt{\frac{\log n}{k(x)}},
\end{align*}
uniformly over $x \in \RR^d$ and with high probability.\\

\noindent\textbf{Bounding the prediction error of the two-sample estimator} Consider now $\wh f$ to be a two-sample $(k_{\P}, k_{\Q})$-NN. One can rewrite
\begin{align*}
    |\wh f(x) - f(x)| &= \frac{k_{\P}(x)}{k_{\P}(x) + k_{\Q}(x)}|\wh f_{\P}(x) - f(x)| + \frac{k_{\Q}(x)}{k_{\P}(x) + k_{\Q}(x)}|\wh f_{\Q}(x) - f(x)|,
\end{align*}
where $f_{\P}$ and $f_{\Q}$ are, respectively, a $k_{\P}$ and a $k_{\Q}$ one-sample nearest neighbour regressors associated with an $n$-sample from $\P_{\sX, \sY}$ and an $m$-sample from $\Q_{\sX, \sY}.$ We apply the previously derived high-probability bounds to both terms in the above decomposition. To do so, we define two sets $\Delta_{\P} := \{x : p(x) \gtrsim \log (n + m)/n\}$ and $\Delta_{\Q} := \{x : q(x) \gtrsim \log (n + m)/m\},$ and we bound the prediction error as follows
\begin{multline}
    \label{eq.two.sample.decomposition.proof.outline}
    \mE_{\Q_{\sX}}(\wh f, f) \lesssim \int_{\Delta_{\P}\cap \Delta_{\Q}} \frac{\log(n + m)}{k_{\P}(x) + k_{\Q}(x)} \, d\Q_{\sX}(x)\\
    + \int_{\Delta_{\P}\cap\Delta_{\Q}} \frac{1}{k_{\P}(x) + k_{\Q}(x)}\Bigg[k_{\P}(x)\bigg(\frac{k_{\P}(x)}{np(x)}\bigg)^{2\beta/d} + k_{\Q}(x)\bigg(\frac{k_{\Q}(x)}{mq(x)}\bigg)^{2\beta/d}\Bigg]\, d\Q_{\sX}(x)\\
    +\int_{\Delta_{\P}^c\cap \Delta_{\Q}}\frac{k_{\P}(x)}{k_{\P}(x) + k_{\Q}(x)}\, d\Q_{\sX}(x) + \int_{\Delta_{\P}\cap \Delta_{\Q}^c}\frac{k_{\Q}(x)}{k_{\P}(x) + k_{\Q}(x)}\, d\Q_{\sX}(x)\\
    + \Q_{\sX}\{\Delta_{\P}^c \cap \Delta_{\Q}^c\}.
\end{multline}
We now take $k_{\P}$ and $k_{\Q}$ as in Theorem \ref{th.transfer.local.knn}. It can be shown that in the regime $n \wedge m \gtrsim \log(n + m),$ for $\ell_{\P} \asymp \log (n + m),$ the maximal mass property implies the existence of two constants $0 < c \leq C < \infty$ such that $c \leq \wh p(x)/p(x) \leq C$ for all $x \in \Delta_{\P}.$ A similar result holds for $\wh q$ and $\ell_{\Q} \asymp \log (n + m).$ This, and some algebra, shows that the first two terms in \eqref{eq.two.sample.decomposition.proof.outline} are equivalent to $\log(n + m)/(np(x) + mq(x)).$ Integrating the latter over $\Delta_{\P}\cap \Delta_{\Q}$ leads to
\begin{multline*}
     \int_{\Delta_{\P}\cap\Delta_{\Q}}\bigg(\frac{\log(n+m)}{np(x) + mq(x)}\bigg)^{2\beta/(2\beta + d)}\\
     \lesssim \int_{\Delta_{\P}}\bigg(\frac{\log(n+m)}{np(x)}\bigg)^{2\beta/(2\beta + d)}q(x)\, dx \wedge \int_{\Delta_{\Q}}\bigg(\frac{\log(n+m)}{mq(x)}\bigg)^{2\beta/(2\beta + d)}q(x)\, dx.
\end{multline*}
Denote by $R^{\P}_{\ell_{\P}}(x)$ the $\ell_{\P}$ nearest neighbour's distance to $x$ among the data sampled from $\P_{\sX, \sY}$ and by $R^{\Q}_{\ell_{\Q}}(x)$ the $\ell_{\Q}$ nearest neighbour's distance to $x$ among the data sampled from $\Q_{\sX, \sY}.$ The maximal mass property enables us to show that $R^{\P}_{\ell_{\P}}(x)$ is lower-bounded by a positive constant for all $x \in \Delta_{\P}^c.$ Hence, using $\ell_{\P} \asymp \ell_{\Q} \asymp \log(n + m),$ in the regime $n\wedge m \gtrsim \log(n + m),$
\begin{align*}
    \int_{\Delta_{\P}^c\cap \Delta_{\Q}}\frac{k_{\P}(x)}{k_{\P}(x) + k_{\Q}(x)}\, d\Q_{\sX}(x) &\lesssim \int_{\Delta_{\P}^c\cap\Delta_{\Q}} 1 \wedge \Bigg(\frac{R^{\Q}_{\ell_{\Q}}(x)^d}{R^{\P}_{\ell_{\P}}(x)^d}\Bigg)^{2\beta/(2\beta + d)}\, d\Q_{\sX}(x)\\
    &\lesssim \int_{\Delta_{\P}^c\cap\Delta_{\Q}} 1\wedge\bigg(\frac{\log (n + m)}{mq(x)}\bigg)^{2\beta/(2\beta + d)}\, d\Q_{\sX}(x)\\
    &\lesssim \int_{\Delta_{\P}^c}q(x)\, dx \wedge \int_{\Delta_{\Q}} \bigg(\frac{\log (n + m)}{mq(x)}\bigg)^{2\beta/(2\beta + d)}q(x)\, dx.
\end{align*}
A similar analysis leads to
\begin{align*}
    \int_{\Delta_{\P}\cap \Delta_{\Q}^c}\frac{k_{\Q}(x)}{k_{\P}(x) + k_{\Q}(x)}\, d\Q_{\sX}(x) &\lesssim \int_{\Delta_{\Q}^c}q(x)\, dx\wedge \int_{\Delta_{\P}}\bigg(\frac{\log (n + m)}{np(x)}\bigg)^{2\beta/(2\beta + d)}q(x)\, dx.
\end{align*}
Finally, the result of Theorem \ref{th.transfer.local.knn} $(ii)$ are obtained by bounding each of the terms, including $\Q_{\sX}\{\Delta_{\P}^c\cap\Delta_{\Q}^c\},$ using that, for non-negative $f,$ if $\int f(x)^b\, dx < \infty$ and $1 \geq a \geq b \geq 0,$ then
\begin{align*}
    \int_{\{x : f(x) \geq \delta\}} f(x)^a\, dx \leq \delta^{a - b}\int f(x)^b\, dx,\text{ and }\int_{\{x : f(x) < \delta\}} f(x)\, dx \leq \delta^{1 - b}\int f(x)^b\, dx,
\end{align*}
together with \eqref{eq.pseudo.moment} and \eqref{eq.density.ratio.exponent}.

\section{Conclusion}\label{sec.conclusion}

We have introduced a novel similarity measure for transfer learning under covariate shift called density ratio exponent. In particular, the density ratio exponent quantifies the relative tail decays of the source and target distributions. We have also introduced the local $k$-nearest neighbours regressor and derived its convergence rates under smoothness assumption and covariate shift. Our results indicate that the standard $k$-NN regressor is outperformed by its local counterpart, demonstrating the advantages of a local choice of $k.$ Our rates account for the tails of both source and target distributions and are optimal if the target admits moments of order $\rho > 2\beta$ and the density ratio exponent is larger than $2\beta/(2\beta + d).$ Existing literature shows that optimal rates are unobtainable for $\rho \leq 2\beta$ and $\gamma < 2\beta/(2\beta + d).$ It remains unclear whether the rates we obtain in those regimes are minimax optimal. Interestingly, minimax optimal rates can be attained even when the likelihood ratio $q(x)/p(x)$ explodes. Our results are obtained under the strong maximal mass assumption. A weakening of the latter will reveal more intricate dynamics for the convergence rates under covariate shift. The authors stress the interest of a weakening that would extend the analysis to distributions that admit zeroes in $\RR^d.$ Such weakening and derivations of corresponding minimax lower bounds are left for future research.

\appendix

\section{Preliminary Results} \label{sec::TheoreticalResults}

\subsection{Technical results}

Let $\gamma, \rho > 0$ and consider $\P \in \mP$ and $\Q \in \mQ(\P, \gamma, \rho).$ We first derive some properties related to the density ratio exponent and the pseudo-moment condition. Denote $\mT_{\Q} \colon t \mapsto \int q(x)^{1 - t}\, dx$ and $\mT_{\P} \colon t \mapsto \int q(x)/p(x)^t\, dx.$

\begin{lemma}\label{lem.ratio.bound}
    \begin{enumerate}
        \item [$(i)$] For any $0 < x < y \leq \gamma,$ there holds
            \begin{align*}
                \mT_{\P}(x)^{1/x} \leq \mT_{\P}(y)^{1/y}.
            \end{align*}
            In particular, $\mT_{\P}(x) \in \RR$ for all $x \in [0, \gamma].$
        \item [$(ii)$] For any $0 < x < y \leq \rho/(\rho + d),$ there holds
            \begin{align*}
                \mT_{\Q}(x)^{1/x} \leq \mT_{\Q}(y)^{1/y}.
            \end{align*}
            In particular, $\mT_{\Q}(x) \in \RR$ for all $x \in [0, \rho/(\rho + d)].$
    \end{enumerate} 
\end{lemma}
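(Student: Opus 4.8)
The plan is to recognise both $\mT_{\P}$ and $\mT_{\Q}$ as $t$-th powers of $L^t$-norms taken with respect to the probability measure $\Q_{\sX}$, and then to invoke the monotonicity of $L^t$-norms on a probability space. Since $q$ is a probability density, for every $t > 0$ we may write
\begin{align*}
    \mT_{\P}(t) = \int \frac{1}{p(x)^t}\, d\Q_{\sX}(x) \qquad\text{and}\qquad \mT_{\Q}(t) = \int q(x)^{1-t}\, dx = \int \frac{1}{q(x)^t}\, d\Q_{\sX}(x).
\end{align*}
Thus both statements reduce to the following elementary fact, which I would establish first: if $g\colon \RR^d \to [0, +\infty]$ is measurable, $0 < x < y$, and $\int g^y\, d\Q_{\sX} < \infty$, then $\int g^x\, d\Q_{\sX} < \infty$ and
\begin{align*}
    \Big(\int g^x\, d\Q_{\sX}\Big)^{1/x} \leq \Big(\int g^y\, d\Q_{\sX}\Big)^{1/y}.
\end{align*}
This follows by applying Jensen's inequality to the convex map $u \mapsto u^{y/x}$ (note $y/x > 1$) against the probability measure $\Q_{\sX}$, which gives $\big(\int g^x\, d\Q_{\sX}\big)^{y/x} \leq \int g^y\, d\Q_{\sX} < \infty$; raising both sides to the power $1/y$ yields the displayed inequality and, in passing, the finiteness of $\int g^x\, d\Q_{\sX}$. (Equivalently, one may use Hölder's inequality with exponents $y/x$ and its conjugate.)

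For part $(i)$ I would take $g = 1/p$. The density ratio exponent condition \eqref{eq.density.ratio.exponent} states precisely that $\int g^\gamma\, d\Q_{\sX} = \mT_{\P}(\gamma) < \infty$. Applying the elementary fact with $y = \gamma$ and any $0 < x < \gamma$ gives $\mT_{\P}(x) < \infty$, and applying it to an arbitrary pair $0 < x < y \leq \gamma$ gives the claimed inequality $\mT_{\P}(x)^{1/x} \leq \mT_{\P}(y)^{1/y}$. Finiteness at $x = 0$ is immediate, since $\mT_{\P}(0) = \int q(x)\, dx = 1$.

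For part $(ii)$ I would take $g = 1/q$ and use that $\rho/(\rho + d) < 1$, so that the endpoint value is $\mT_{\Q}\big(\rho/(\rho + d)\big) = \int q(x)^{d/(\rho + d)}\, dx$, which is finite by the pseudo-moment condition \eqref{eq.pseudo.moment}. The same two applications of the elementary fact — with $y = \rho/(\rho + d)$ for the finiteness claim, and with a general pair $0 < x < y \leq \rho/(\rho + d)$ for the inequality — conclude the argument; the case $x = 0$ is again trivial, as $\mT_{\Q}(0) = 1$.

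The only delicate point, and the one place requiring care rather than routine computation, is the treatment of the set where the denominator vanishes. In part $(i)$, $g = 1/p$ equals $+\infty$ on $\{p = 0\}$; however, the finiteness of $\mT_{\P}(\gamma)$ in \eqref{eq.density.ratio.exponent} forces $\Q_{\sX}\{p = 0\} = 0$, so this set is $\Q_{\sX}$-negligible and Jensen's inequality — valid on a probability space for $[0, +\infty]$-valued functions — applies directly to $g$. In part $(ii)$ the integrand $q^{1-t}$ has positive exponent $1 - t > 0$ and is genuinely zero on $\{q = 0\}$, so no convention is needed there at all. Beyond this bookkeeping I do not anticipate any genuine obstacle.
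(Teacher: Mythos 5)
Your proposal is correct and uses essentially the same argument as the paper: both reduce the claim to Jensen's inequality for the convex map $u \mapsto u^{y/x}$ applied to $p(X)^{-1}$ (resp. $q(X)^{-1}$) under $X \sim \Q_{\sX}$, and both deduce finiteness at smaller exponents from finiteness at the endpoint. Your extra remark that $\Q_{\sX}\{p=0\}=0$ is a reasonable clarification of a point the paper leaves implicit, but it does not change the nature of the proof.
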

\begin{proof}
   Let $X \sim \Q.$ For all $x \in [0, \gamma],$ the function $t \mapsto t^{\gamma/x}$ is convex on $\RR_+.$ By Jensen's inequality, we have 
   \begin{align*}
       \mT_{\P}(x) = \big(\E[p(X)^{-x}]^{\gamma/x}\big)^{x/\gamma} \leq \E[p(X)^{-\gamma}]^{x/\gamma} = \mT_{\P}(\gamma)^{x/\gamma}.
   \end{align*}
   The right-hand side of the latter inequality is $<\infty$ by assumption. This proves the second claim of $(i).$ The first claim of $(i)$ is proven by taking $\gamma = y$ in the previous display. The same computations with $p = q$ and $\gamma = \rho/(\rho + d)$ lead to $(ii).$
\end{proof}

The next result shows that Assumption \ref{def.class.proba} $(iii)$ is satisfied by any random variable that admits a moment of order $\rho + \eps,$ with arbitrary $\eps > 0.$
\begin{lemma}\label{lem.pseudo.moment}
If $\eps >0,$ and $X \sim \Q$ admits a generalized moment of order $\rho + \eps,$ then $\Q$ satisfies \eqref{eq.pseudo.moment} with constant $\rho.$
\end{lemma}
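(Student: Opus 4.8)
The plan is to unpack the definition of a generalized moment and reduce \eqref{eq.pseudo.moment} to a decomposition of $\RR^d$ into a region where $q$ is large and a region where it is small. First I would recall that $X \sim \Q$ admits a generalized moment of order $\rho + \eps$ means $\int \|x\|^{\rho + \eps} q(x)\, dx < \infty$ (equivalently $\E\|X\|^{\rho+\eps} < \infty$). The target integral is $\int q(x)^{d/(\rho + d)}\, dx$, and the exponent $d/(\rho+d) \in (0,1)$, so the integrand is not automatically integrable — the issue is purely the behaviour of $q$ in the tails. I would split $\RR^d = \{\|x\| \le 1\} \cup \{\|x\| > 1\}$ (or use a generic radius). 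On the bounded ball, $\int_{\|x\|\le 1} q(x)^{d/(\rho+d)}\, dx$ is finite by Jensen's inequality (or Hölder) applied to the finite-measure set: $\int_{\B(0,1)} q^{d/(\rho+d)} \le \big(\int_{\B(0,1)} q\big)^{d/(\rho+d)} \cdot \mathrm{vol}(\B(0,1))^{\rho/(\rho+d)} \le \mathrm{vol}(\B(0,1))^{\rho/(\rho+d)} < \infty$.

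For the tail region $\{\|x\| > 1\}$, the key step is a Hölder inequality with conjugate exponents chosen to absorb the polynomial weight $\|x\|^{\rho+\eps}$. Write
\begin{align*}
    q(x)^{d/(\rho+d)} = \big(\|x\|^{\rho+\eps} q(x)\big)^{d/(\rho+d)} \cdot \|x\|^{-(\rho+\eps)d/(\rho+d)}.
\end{align*}
Apply Hölder with exponents $s = (\rho+d)/d$ and its conjugate $s' = (\rho+d)/\rho$: the first factor raised to $s$ gives $\int_{\|x\|>1} \|x\|^{\rho+\eps} q(x)\, dx < \infty$ by the moment assumption, and the second factor raised to $s'$ gives $\int_{\|x\|>1} \|x\|^{-(\rho+\eps)/\rho}\, dx$. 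This last integral converges precisely because $(\rho+\eps)/\rho = 1 + \eps/\rho > 1$, so in polar coordinates the radial integral $\int_1^\infty r^{d-1 - (\rho+\eps)/\rho}\, dr$ — wait, that exponent is not obviously negative enough; I need to be careful and instead raise $\|x\|^{-(\rho+\eps)d/(\rho+d)}$ to the power $s'$, which yields the exponent $-(\rho+\eps)d/(\rho+d) \cdot (\rho+d)/\rho = -(\rho+\eps)d/\rho$, giving radial integrand $r^{d-1-(\rho+\eps)d/\rho}$, and since $(\rho+\eps)d/\rho - (d-1) = d + \eps d/\rho - d + 1 = 1 + \eps d/\rho > 1$, the integral converges. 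Combining the two regions gives finiteness of $\int q^{d/(\rho+d)}\,dx$, which is \eqref{eq.pseudo.moment}.

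The main obstacle, such as it is, is bookkeeping the exponents so that the Hölder split lands exactly on a convergent tail integral; there is no conceptual difficulty, but one must confirm that the moment order $\rho+\eps$ with strictly positive $\eps$ is what makes the radial exponent strictly larger than the critical value, explaining why the borderline moment of order exactly $\rho$ does not suffice and why \eqref{eq.pseudo.moment} is stated as a separate (weaker) hypothesis rather than derived from a $\rho$-th moment. I would also note that the argument is agnostic to whether one uses $\B(0,1)$ or any fixed ball, and that the constant $\rho$ in \eqref{eq.pseudo.moment} is inherited verbatim from the moment order, as claimed in the statement.
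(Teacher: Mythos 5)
Your proof is correct and uses essentially the same idea as the paper: a H\"older inequality with conjugate exponents $(\rho+d)/d$ and $(\rho+d)/\rho$ that trades the $d/(\rho+d)$-th power of $q$ against the moment factor $\|x\|^{\rho+\eps}q(x)$, with $\eps>0$ supplying exactly the margin needed for the companion tail integral to converge. The only cosmetic difference is that the paper uses the weight $1+\|x\|^{\rho+\eps}$ (bounded away from zero) so that a single global H\"older application suffices, whereas your choice of $\|x\|^{\rho+\eps}$ forces the ball/tail split to handle the origin; both are fine.
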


\begin{proof}
Let $\tau = d/(\rho + d).$ By H\"{o}lder's inequality, we have
\begin{align*}
& \int  q(x)^{\tau} \, dx 
   = \int (1 + \|x\|^{\rho + \eps})^{\tau} q(x)^{\tau} (1 + \|x\|^{\rho + \eps})^{-\tau} \, dx
\\
& \leq \biggl( \int \bigl( (1 + \|x\|^{\rho + \eps})^{\tau} q(x)^{\tau} \bigr)^{1/\tau} \, dx \biggr)^{\tau} \cdot
           \biggl( \int (1 + \|x\|^{\rho + \eps})^{- \tau / (1 - \tau)} \, dx \biggr)^{1-\tau}
\\
& \leq (1 + M)^{\tau} \biggl( \int (1 + \|x\|^{\rho + \eps})^{- \tau / (1 - \tau)} \, dx \biggr)^{1-\tau},
\end{align*}
where $M := \int\|x\|^{\rho + \eps}\, \Q(dx).$ Since $\tau/(1 - \tau) = d/\rho,$ we have $\tau(\rho + \eps) / (1 - \tau) > d$ and consequently
\begin{align*}
\int (1 + \|x\|^{\rho + \eps})^{- \tau / (1 - \tau)} \, dx < \infty,
\end{align*}
which proves the claim.
\end{proof}

\begin{proposition}\label{prop.covering.number}
Let $\mB := \{\B(x, r) : x \in \RR^d, r > 0 \}$. 
The VC dimension of $\mB$ is $2d + 1$. 
Moreover, there exists a universal constant $K$ such that for any $\eps \in (0, 1)$, 
\begin{align*}
\mathcal{N}(\mathds{1}_{\mB}, \|\cdot\|_{L_1(\Q)}, \eps)
\leq K (2d+1) (4e)^{2d+1} \eps^{-2d}
\end{align*}
holds for any probability measure $\Q,$ where $\mathds{1}_{\mB} := \{\mathds{1}(\cdot \in B), B \in \mB\}.$
\end{proposition}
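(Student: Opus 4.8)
The plan is to treat the two assertions separately: first bound the Vapnik--Chervonenkis dimension of the class $\mB$ of closed Euclidean balls, and then feed this bound into a standard uniform entropy estimate for VC classes.

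\textbf{VC dimension.} The key observation is that membership in a ball is governed by the sign of a polynomial drawn from a fixed, low-dimensional linear space of functions. Indeed, for $a \in \RR^d$ and $r > 0$,
\begin{align*}
\B(a, r) = \big\{u \in \RR^d : \|u\|^2 - 2\langle a, u\rangle + \|a\|^2 - r^2 \le 0\big\},
\end{align*}
so every element of $\mB$ has the form $\{u : g(u) \le 0\}$ with $g$ ranging over the span of the $2d+1$ functions $u \mapsto 1, u_1, \dots, u_d, u_1^2, \dots, u_d^2$. Invoking the classical fact that the sign-sets of a finite-dimensional vector space of real functions form a VC class whose index is controlled by the dimension of that space (a consequence of the Milnor--Thom bound, cf.\ Theorem~12.2 in \cite{Biau2015Lectures}), one obtains that $\mB$ has VC dimension $2d+1$, the stated value. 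An equivalent and perhaps cleaner route uses the lift $u \mapsto (u, \|u\|^2) \in \RR^{d+1}$, under which each ball becomes the preimage of an affine half-space of $\RR^{d+1}$; since the affine half-spaces of $\RR^{d+1}$ form a VC class, so does $\mB$.

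\textbf{Covering number.} Once $\mB$ is known to be a VC class, the associated collection of indicator functions $\mathds{1}_{\mB}$ is a uniformly bounded (by $1$) VC-subgraph class with the same index $V = 2d+1$. The announced inequality is then exactly the uniform $L_1$-entropy bound for such classes: there is a universal constant $K$ such that, for every probability measure $\Q$ and every $\eps \in (0,1)$,
\begin{align*}
\mathcal{N}\big(\mathds{1}_{\mB}, \|\cdot\|_{L_1(\Q)}, \eps\big) \le K\, V\, (4e)^{V}\, \eps^{-(V-1)}
\end{align*}
(see the treatment of covering numbers of VC classes in \cite{Biau2015Lectures}); substituting $V = 2d+1$ gives $K(2d+1)(4e)^{2d+1}\eps^{-2d}$, as claimed. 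The feature doing the real work here is the uniformity over $\Q$, which is precisely what VC theory supplies; the envelope being the constant $1$ makes the bound dimension-free in $\eps$ beyond the exponent $V-1$.

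\textbf{Main obstacle.} The invocation of the entropy bound is routine; the point deserving care is the first step, namely pinning down the VC index of $\mB$ together with the bookkeeping between ``VC dimension'' and ``VC index'' (a difference of one), so that the exponent in the final bound emerges as $\eps^{-2d}$ rather than $\eps^{-(2d+1)}$. The other small technical obligation is to check that passing from the set class $\mB$ to the subgraph class of $\mathds{1}_{\mB}$ preserves the relevant index, so that the envelope-$1$ entropy theorem applies verbatim; both of these are standard and require no new ideas.
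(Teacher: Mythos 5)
Your proof is correct and ultimately rests on the same two pillars as the paper's own proof: the VC dimension of the class of Euclidean balls, followed by the standard uniform $L_1$-entropy bound for VC classes. The paper's proof is pure citation (Example 2.6.1 in van der Vaart and Wellner for the VC dimension, Theorem 9.2 in Kosorok for the covering number), whereas you supply an actual argument for the VC dimension step (Dudley's polynomial-discrimination theorem, or alternatively the lift $u \mapsto (u, \|u\|^2)$ to half-spaces in $\RR^{d+1}$). That makes your write-up somewhat more self-contained.

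Two small points worth fixing. First, the citation is off: Theorem 12.2 in \cite{Biau2015Lectures} is the Milnor--Thom count of nearest-neighbour orderings (used elsewhere in this paper for the variance bound), not Dudley's theorem on sign-sets of a finite-dimensional function space; the latter is, e.g., Lemma 2.6.15 in van der Vaart and Wellner. Second, your argument establishes only the upper bound $\mathrm{VCdim}(\mB) \leq 2d+1$ (indeed the lifting trick gives the sharper $d+2$), not the exact value $2d+1$ claimed in the statement. Since only an upper bound feeds into the covering-number inequality, this does not affect the validity of the downstream argument, but one should either prove the matching lower bound or weaken the claim to an inequality.
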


\begin{proof}
The first result of VC dimension follows from Example 2.6.1 in \cite{MR1385671}. The second result on the covering number follows from Theorem 9.2 in \cite{MR2724368}.
\end{proof}

\begin{lemma}[Bernstein's Inequality]
    Let $X_1, \dots, X_n$ be $n$ i.i.d.\ random variables with values in $\RR.$ If $\PP\{|X_1| \leq C\} = 1$ for some $C > 0,$ and $\E[X_1] = \mu,$ then, for all $t > 0,$
    \begin{align}
        \label{eq.Bernstein}
        \PP\bigg\{\Big|\frac 1n \sum_{i=1}^n X_i - \mu\Big| \geq t\bigg\} \leq 2\exp\bigg(-\frac{nt^2}{2\sigma^2 + 2Ct/3}\bigg),
    \end{align}
    where $\sigma^2 = \Var(X_1).$
\end{lemma}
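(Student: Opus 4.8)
\noindent\emph{Proof strategy.} The plan is to obtain \eqref{eq.Bernstein} by the classical Cram\'er--Chernoff method applied to the centered variables. First I would reduce to a one-sided estimate: since $-X_1,\dots,-X_n$ satisfy the same hypotheses (same constant $C$, same variance $\sigma^2$, mean $-\mu$), it suffices to prove
\[
    \PP\Big\{\tfrac1n\textstyle\sum_{i=1}^n (X_i - \mu) \ge t\Big\} \le \exp\!\Big(-\frac{nt^2}{2\sigma^2 + 2Ct/3}\Big),
\]
and then add this bound to the analogous bound for the lower tail, which produces the factor $2$.

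For the one-sided bound, set $Z_i := X_i - \mu$, so that the $Z_i$ are i.i.d., centered, almost surely bounded, with $\E Z_1^2 = \sigma^2$. For $\lambda>0$, Markov's inequality applied to $\exp\big(\lambda\sum_{i=1}^n Z_i\big)$ gives $\PP\{\tfrac1n\sum_i Z_i \ge t\} \le e^{-\lambda n t}(\E e^{\lambda Z_1})^n$. The crux is a bound on the moment generating function: expanding $e^{\lambda Z_1}$ in a power series, using $\E Z_1=0$, $\E Z_1^2=\sigma^2$, and the moment comparison $|\E Z_1^k| \le \E|Z_1|^k \le \tfrac{k!}{2}\,\sigma^2 (C/3)^{k-2}$ for $k\ge 2$ — which follows from the almost-sure bound on $Z_1$ together with the elementary inequality $3^{k-2}\le k!/2$ — and summing the resulting geometric series yields
\[
    \E e^{\lambda Z_1} \le 1 + \frac{\sigma^2\lambda^2/2}{1 - C\lambda/3} \le \exp\!\Big(\frac{\sigma^2\lambda^2/2}{1 - C\lambda/3}\Big), \qquad 0 < \lambda < 3/C .
\]
Substituting this in gives $\PP\{\tfrac1n\sum_i Z_i \ge t\} \le \exp\big(-\lambda n t + n\sigma^2\lambda^2/(2(1 - C\lambda/3))\big)$ for every $\lambda \in (0, 3/C)$.

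Finally I would optimise in $\lambda$: the choice $\lambda = t/(\sigma^2 + Ct/3)$, which lies in $(0,3/C)$, gives $1 - C\lambda/3 = \sigma^2/(\sigma^2 + Ct/3)$, and the exponent collapses to exactly $-nt^2/(2\sigma^2 + 2Ct/3)$; together with the symmetrisation step this proves \eqref{eq.Bernstein}. The only step needing a little care is the passage from the raw power series to the closed-form MGF estimate — i.e.\ controlling the centered moments $\E|Z_1|^k$ by $\sigma^2$ times a geometric factor in the boundedness constant, which is precisely what generates the constant $2/3$ in the subexponential term — but this is entirely classical and uses nothing beyond $\E Z_1^2=\sigma^2$ and almost-sure boundedness; everything else is routine algebra.
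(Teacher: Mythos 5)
The paper states this lemma without proof, treating Bernstein's inequality as a known classical fact, so there is no in-paper argument to compare against; I can only assess your proof on its own merits. The Cram\'er--Chernoff architecture you lay out is the standard one, and the final optimisation is fine: with $\lambda = t/(\sigma^2 + Ct/3)$ one indeed has $1 - C\lambda/3 = \sigma^2/(\sigma^2 + Ct/3)$, and the exponent $-\lambda n t + n\sigma^2\lambda^2/\bigl(2(1 - C\lambda/3)\bigr)$ simplifies to $-nt^2/(2\sigma^2 + 2Ct/3)$; the two-sided bound also follows correctly by applying the one-sided estimate to $\pm(X_i-\mu)$.

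There is, however, a genuine gap in the moment-comparison step. You assert that $\E|Z_1|^k \le \tfrac{k!}{2}\sigma^2(C/3)^{k-2}$ ``follows from the almost-sure bound on $Z_1$'' together with $3^{k-2}\le k!/2$. That chain is valid when $|Z_1|\le C$ a.s., since then $\E|Z_1|^k \le C^{k-2}\E Z_1^2 = 3^{k-2}(C/3)^{k-2}\sigma^2 \le \tfrac{k!}{2}\sigma^2(C/3)^{k-2}$. But the stated hypothesis is $|X_1|\le C$, which only gives $|\mu|\le C$ and hence $|Z_1| = |X_1-\mu| \le 2C$. Plugging $2C$ through the same estimate produces an extra factor $2^{k-2}$, the inequality $6^{k-2}\le k!/2$ already fails at $k=3$, and carrying the correct geometric series through your optimisation yields the weaker denominator $2\sigma^2 + 4Ct/3$. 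In other words, as written your argument proves the inequality only under the hypothesis $|X_1-\mu|\le C$ a.s., which is the form the classical references use. This imprecision is actually inherited from the lemma statement itself: in the paper's single application the summands are Bernoulli indicators with mean $h\in(0,1)$, so $|X_i - \mu|\le 1 = C$ and the bound is used correctly there. You should either record the hypothesis as $|X_1-\mu|\le C$ a.s.\ (or note that it holds in the intended application), or accept the constant $4Ct/3$; the rest of your proof is sound.
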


Let $\mM$ be the set of all probability distributions on $(\RR^d, \mB(\RR^d)).$
\begin{definition}
    For $\P \in \mM,$ and $h > 0,$ we define the function $\zeta_h^{\P}$ as 
    \begin{align}
        \label{eq.zeta.definition}
        \zeta_h^{\P}\colon x\mapsto \inf\Big\{r > 0: \P\{\B(x, r) \geq h\}\Big\}.
    \end{align}
    When the considered distribution $\P$ is clear from the context, we will drop the $\P$ exponent in the notations and write $\zeta_h$ instead.
\end{definition}
Next, we prove a uniform bound for $R_k.$ Let $\P \in \mM, \ n \geq 2d, \ X_1, \dots, X_n$ be $n$ i.i.d.\ random variables with distribution $\P.$ Let $\delta, h \in (0, 1).$
\begin{proposition}
    \label{prop.Bernstein.general}
    Let $K_0 = 2K(2d + 1)(4e)^{2d+1}$ and $K$ the constant defined in Proposition \ref{prop.covering.number}. The two following assertions hold.
    \begin{enumerate}
        \item [$(i)$] If $\lambda \in (0, 1)$ and the integer $n$ satisfies
            \begin{align}
                \label{eq.Bernstein.nk.condition}
                n &\geq \frac{4d(1 + \lambda/3)}{\lambda^2h}\log\bigg(\Big(\frac{K_0}{\delta}\Big)^{1/(2d)}\frac{2}{(1 - \lambda)h}\bigg),
            \end{align}
            then
            \begin{align*}
                \PP\Bigg\{\inf_{x \in \RR^d}\frac{1}{nh}\sum_{i=1}^n \mathds{1}(X_i \in \B(x, \zeta_h(x))) < \frac{(1 - \lambda)}{2}\Bigg\} \leq \delta.
            \end{align*}
        \item [$(ii)$] If $\lambda \in \RR_+^*$ and the integer $n$ satisfies
            \begin{align}
                \label{eq.Bernstein.nk.lower.condition}
                n &\geq \frac{4d(1 + \lambda/3)}{\lambda^2h}\log\bigg(\Big(\frac{K_0}{\delta}\Big)^{1/(2d)}\frac{1}{(1 + \lambda)h}\bigg),
            \end{align}
            then,
            \begin{align*}
                \PP\Bigg\{\sup_{x \in \RR^d}\frac{1}{nh}\sum_{i=1}^n \mathds{1}(X_i \in \B(x, \zeta_h(x))) > 2(1 + \lambda)\Bigg\} \leq \delta.
            \end{align*}
    \end{enumerate}
\end{proposition}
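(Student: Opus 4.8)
The plan is to reduce both statements to a uniform deviation bound for the empirical process indexed by the VC class of balls $\mB$, and then invoke Bernstein's inequality together with a chaining/covering-number argument. First I would fix $x \in \RR^d$ and consider the class of functions $\mathcal{G} = \{g_x \colon x \mapsto \mathds{1}(\cdot \in \B(x, \zeta_h(x)))\}$. For a fixed $x$, by the very definition of $\zeta_h$ in \eqref{eq.zeta.definition}, we have $\P\{\B(x, \zeta_h(x))\} \geq h$; moreover, since $\P$ is absolutely continuous, $r \mapsto \P\{\B(x, r)\}$ is continuous, so in fact $\P\{\B(x, \zeta_h(x))\} = h$ exactly. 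Thus each $g_x$ has mean exactly $h$ under $\P$, and variance $h(1-h) \leq h$. This is the key structural fact that makes the Bernstein exponent scale correctly with $h$.

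Next I would apply Bernstein's inequality \eqref{eq.Bernstein} to $\frac{1}{n}\sum_i g_x(X_i)$ for each fixed $x$, obtaining a bound of the form $2\exp(-nt^2/(2h + 2t/3))$ for deviations of size $t$ from $h$; taking $t = \lambda h$ gives the exponent $-n\lambda^2 h/(2 + 2\lambda/3) = -3n\lambda^2 h/(6 + 2\lambda)$, which up to constants matches the $n \gtrsim d(1+\lambda/3)/(\lambda^2 h)\cdot(\dots)$ threshold in \eqref{eq.Bernstein.nk.condition}–\eqref{eq.Bernstein.nk.lower.condition}. To upgrade this pointwise estimate to a supremum over $x \in \RR^d$, I would discretize: cover $\mathds{1}_{\mB}$ in $L_1(\P)$-norm at scale $\eps$ using Proposition \ref{prop.covering.number}, which gives $\mathcal{N}(\mathds{1}_{\mB}, \|\cdot\|_{L_1(\P)}, \eps) \leq K(2d+1)(4e)^{2d+1}\eps^{-2d}$. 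On the finite $\eps$-net, union-bound the Bernstein estimates; between net points, control the empirical and population $L_1$-distance by $\eps$ plus an empirical-process fluctuation (which itself is bounded, or one can simply absorb the net error into the choice of $\lambda$ by a factor $(1-\lambda)$ versus $(1+\lambda)$, explaining the asymmetry between the two parts). Choosing $\eps \asymp h$ (more precisely $\eps = (1-\lambda)h/2$ in part $(i)$ and $\eps = (1+\lambda)h$ in part $(ii)$, as the exact thresholds suggest) makes the net-approximation error a constant fraction of $h$, and then solving $K_0 \eps^{-2d}\exp(-cn\lambda^2 h) \leq \delta$ for $n$ yields exactly the stated conditions \eqref{eq.Bernstein.nk.condition} and \eqref{eq.Bernstein.nk.lower.condition}, with $K_0 = 2K(2d+1)(4e)^{2d+1}$ accounting for the factor $2$ in Bernstein and the covering constant.

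The main obstacle I anticipate is handling the covering argument cleanly for an \emph{infinite} index set with sets $\B(x,\zeta_h(x))$ whose radii themselves depend on $x$ in a discontinuous way: one must ensure that $L_1(\P)$-closeness of two indicator functions $\mathds{1}_{\B(x,\zeta_h(x))}$ and $\mathds{1}_{\B(x',\zeta_h(x'))}$ genuinely controls the difference of the corresponding empirical averages. This is where the VC/covering bound of Proposition \ref{prop.covering.number} is essential — it bounds the covering number \emph{uniformly over all probability measures}, so it applies both to $\P$ (for the population approximation) and to the empirical measure $\P_n$ (for the empirical approximation). A careful but routine bookkeeping of which errors go into the $\eps$-net and which into the slack $\lambda$ then gives the asymmetric constants $(1-\lambda)/2$ versus $2(1+\lambda)$ in the two conclusions; I would present part $(i)$ in full and remark that part $(ii)$ follows by the same argument with the inequalities reversed and $t = \lambda h$ allowed to be arbitrarily large (hence $\lambda \in \RR_+^*$ rather than $\lambda \in (0,1)$).
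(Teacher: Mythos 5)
Your plan matches the paper's proof in all essential respects: apply Bernstein pointwise (using that $\P\{\B(x,\zeta_h(x))\} = h$ exactly, so the mean is $h$ and the variance is $\leq h$), discretize using the covering number bound of Proposition \ref{prop.covering.number}, union-bound over the net, pick $t = \lambda h$, and choose $\eps$ at the thresholds $(1-\lambda)h/2$ resp.\ $(1+\lambda)h$; the constant $K_0 = 2K(2d+1)(4e)^{2d+1}$ absorbs the factor $2$ from Bernstein and the covering constant, and the two-sided $\lambda$-restrictions explain the asymmetry, exactly as you say.

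One refinement: your first instinct is to cover in $\|\cdot\|_{L_1(\P)}$ and then "control the empirical and population $L_1$-distance by $\eps$ plus an empirical-process fluctuation." That detour is avoidable and in fact a bit awkward, because bounding $\|\mathds 1_{S(x,h)} - \mathds 1_{S(z,h)}\|_{L_1(\P_n)}$ in terms of $\|\mathds 1_{S(x,h)} - \mathds 1_{S(z,h)}\|_{L_1(\P)}$ uniformly requires yet another concentration statement over the class of symmetric differences of balls. The paper instead covers directly in $\|\cdot\|_{L_1(\P_n)}$, i.e.\ it takes a \emph{random} $\eps$-net $\{S(Z_j,h)\}_{j \le \ol m}$, which by construction controls the difference of empirical averages with no intermediate step; what makes the union bound legitimate is that Proposition \ref{prop.covering.number} gives a deterministic cardinality bound $\ol m \le \|M\|_\infty \le K_0\eps^{-2d}/2$ that does not depend on the realization. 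You allude to this in your last paragraph, so the idea is there — just commit to the empirical-measure covering from the start and drop the "$L_1(\P)$ plus fluctuation" route.
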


\begin{proof}
    Let $z^m := (z_1, \dots, z_m) \in \RR^{d\times m}$ be an arbitrary vector of $m$ different points, denote $S(x, h) := \B(x, \zeta_h(x)),$ and define the set
    \begin{align*}
        \mI(h) := \bigg\{\mathds{1}(\cdot \in S(x, h))\bigg\}.
    \end{align*}
    Consider the event
    \begin{align*}
        A_n(t, z^m) := \bigcap_{j=1}^m\bigg\{\Big|\frac 1n \sum_{i=1}^n \mathds{1}\big(X_i \in S(z_j, h)\big) - h \Big|\leq t\bigg\}.
    \end{align*}
    Let $X \sim \P.$ For all $x \in \RR^d,$ we have
    \begin{enumerate}
        \item [$(i)$] $\E[\mathds{1}(X \in S(x, h))] = \P_{\sX}\{S(x, h)\} = \P_{\sX}\{\B(x, \zeta_h(x))\} = h$
        \item [$(ii)$] $\Var(\mathds{1}(X \in S(x, h))) = h(1 - h) \leq h.$
        \item [$(iii)$] $0 \leq \mathds{1}(X \in S(x, h)) \leq 1.$
    \end{enumerate}
    By the union bound and Bernstein's inequality \eqref{eq.Bernstein}, 
    \begin{align}
        \label{eq.Bernstein.union.bound}
        \PP\big\{A_n(t, z^m)^c\big\} &\leq \sum_{j=1}^m \PP\bigg\{\Big|\frac 1n \sum_{i=1}^n \mathds{1}\big(X_i \in S(z_j, h)\big) - h \Big|\geq t\bigg\} \leq 2m\exp\bigg(-\frac{nt^2}{2h + 2t/3}\bigg).
    \end{align}
    Let $\P_n := n^{-1}\sum_{i=1}^n \delta_{X_i}$ be the empirical distribution of the sample $X_1, \dots, X_n,$ and let $\eps \in (0, 1).$ Applying Proposition \ref{prop.covering.number} ensures that there exists an $\eps$-net over $(\mB, \|\cdot\|_{L_1(\P_n)})$ with finite cardinality $M = M(X^n),$ with $\|M\|_\infty \leq K_0\eps^{-2d}.$ As $\mI(h) \subseteq \mB,$ there exists an $\eps$-net over $(\mI(h), \|\cdot\|_{L_1(\P_n)})$ with finite (random) cardinality $\ol m = \ol m(X^n)$ upper bounded by $\|M\|_{\infty}.$  Let $Z_1, \dots Z_{\ol m}$ be such that this $\eps$-net is the set $\big\{S(Z_j, h), 1\leq j\leq \ol m\big\}.$ Since the upper bound on $m$ does not depend on the sample, the upper bound \eqref{eq.Bernstein.union.bound} is also valid when $z^m$ is replaced by $Z^{\ol m}.$ Hence, using the covering bound of Proposition \ref{prop.covering.number} yields
    \begin{align*}
        \PP\big\{A_n(t, Z^{\ol m})^c\big\}&\leq K_0\eps^{-2d}\exp\bigg(-\frac{nt^2}{2h + 2t/3}\bigg).
    \end{align*}
    We now work on $A_n(t, Z^{\ol m}).$ The $\eps$-net property of the $Z_j$'s ensures that for all $x \in \RR^d,$ there exists $j \in [\ol m],$ such that
    \begin{align*}
        \big\|\mathds{1}(\cdot \in S(x, h)) - \mathds{1}(\cdot \in S(Z_j, h))\big\|_{L_1(\P_n)} \leq \eps,
    \end{align*}
    and, on $A_n(t, Z^{\ol m}),$ it follows that
    \begin{align*}
        \Big|\frac 1n \sum_{i=1}^n \mathds{1}(X_i \in S(x, h)) - h\Big| \leq \big\|\mathds{1}&(\cdot \in S(x, h)) - \mathds{1}(\cdot \in S(Z_j, h))\big\|_{L_1(\P_n)}\\
        &+ \Big|\frac 1n \sum_{i=1}^n \mathds{1}(X_i \in S(Z_j, h)) - h\Big|,
    \end{align*}
    where the first term is upper bounded by $\eps,$ while the definition of $A_n(t, Z^{\ol m})$ ensures that the second term is upper bounded by $t.$ The point $x \in \RR^d$ has been taken arbitrarily in $\RR^d,$ hence,
    \begin{align}
        \label{eq.two.sided.uniform.Bernstein}
        \sup_{x \in \RR^d}\Big|\frac 1n \sum_{i=1}^n \mathds{1}(X_i \in S(x, h)) - h\Big| \leq \eps + t.
    \end{align}
    We now proceed to prove the first claim. In particular, from \eqref{eq.two.sided.uniform.Bernstein}, it holds that
    \begin{align}
        \label{eq.0000}
        \inf_{x \in \RR^d}\frac 1n \sum_{i=1}^n \mathds{1}(X_i \in S(x, h)) \geq h -
        \eps - t.
    \end{align}
    Let $\delta, \lambda \in (0, 1).$ We pick $t = \lambda h,$ and
    \begin{align*}
        \eps = \delta^{-1/(2d)}K_0^{1/(2d)}\exp\bigg(-\frac{n\lambda^2h}{4d(1 + \lambda/3)}\bigg).
    \end{align*}
    One can check that for all $n$ satisfying
    \begin{align*}
        n \geq \frac{4d(1 + \lambda/3)}{\lambda^2h}\log\bigg(\frac{2}{(1 - \lambda)h}\Big(\frac{K_0}{\delta}\Big)^{1/(2d)}\bigg),
    \end{align*}
    it holds that $\eps \leq (1 - \lambda)h/2.$ Finally, plugging these values in \eqref{eq.0000} shows that, on $A_n(t, Z^{\ol m}),$
    \begin{align*}
        \inf_{x \in \RR^d}\frac 1{n} \sum_{i=1}^n \mathds{1}(X_i \in S(x, h)) \geq \frac{(1 - \lambda)h}2.
    \end{align*}
    This proves that
    \begin{align*}
        \PP\Bigg\{\inf_{x \in \RR^d}\frac 1{nh} \sum_{i=1}^n \mathds{1}(X_i \in S(x, h)) < \frac{(1 - \lambda)}2\Bigg\} < \PP\big\{A_n(t, Z^{\ol m})^c\big\},
    \end{align*}
    and it can be checked that $\PP\big\{A_n(t, Z^{\ol m})^c\big\} \leq \delta,$ which finishes the proof of the first claim. We now prove the second claim, which is a slight variation of the proof of the first one. A consequence of \eqref{eq.two.sided.uniform.Bernstein} is that
    \begin{align*}
        \sup_{x \in \RR^d}\frac 1n \sum_{i=1}^n \mathds{1}(X_i \in S(x, h)) \leq h + \eps + t.
    \end{align*}
    We then pick $t = \lambda h$ with $\lambda \in (0, \infty)$ and $\eps$ as previously. In particular, it holds that for all 
    \begin{align*}
        n \geq \frac{4d(1 + \lambda/3)}{\lambda^2h}\log\Bigg(\frac{1}{(1 + \lambda) h}\bigg(\frac{K_0}{\delta}\bigg)^{1/(2d)}\Bigg),
    \end{align*}
    we have $\eps \leq (1 + \lambda)h.$ Hence, for such $n$,
    \begin{align*}
        \sup_{x \in \RR^d}\frac 1n \sum_{i=1}^n \mathds{1}(X_i \in S(x, h)) \leq 2(1 + \lambda)h,
    \end{align*}
    which proves that
    \begin{align*}
        \PP\Bigg\{\sup_{x \in \RR^d}\frac 1{nh} \sum_{i=1}^n \mathds{1}(X_i \in S(x, h)) < 2(1 + \lambda)\Bigg\} < \PP\big\{A_n(t, Z^{\ol m})^c\big\} \leq \delta.
    \end{align*}
    This finishes the proof.
\end{proof}

One can check that both the conclusions of Proposition \ref{prop.Bernstein.general} are true whenever \eqref{eq.Bernstein.nk.condition} is satisfied. From now on, for technical reasons, we assume that the constant $K$ in Proposition \ref{prop.covering.number} is greater than $1.$ This assumption only potentially worsens the covering number bound. We now consider an integer $k \in \{1, \dots, n\}.$

\begin{proposition}
    \label{prop.neighbours.distance.bound.zeta}
    Let $\tau > 0.$ If $k \geq \lceil \log n\rceil,$ and $h_+ = h_+(k) := 2k/((1 - \lambda^+_\tau)n)$ with
    \begin{align*}
        \lambda_\tau^+ := \frac{4\big(2d + \tau + \log(K_0)/\log(2d))\big) + 9}{4\big(2d + \tau + \log(K_0)/\log(2d))\big) + 12},
    \end{align*} 
    then,
    \begin{align*}
        \PP\Bigg\{\sup_{x \in \RR^d}\frac{R_k(x)}{\zeta_{h_+}(x)} \geq 1\Bigg\} \leq n^{-\tau}.
    \end{align*}
\end{proposition}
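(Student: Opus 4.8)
The plan is to reduce the claimed uniform bound on the ratios $R_k(x)/\zeta_{h_+}(x)$ to a uniform lower bound on the empirical mass of the balls $\B(x,\zeta_{h_+}(x))$, and then to invoke Proposition~\ref{prop.Bernstein.general}$(i)$ with a carefully chosen set of parameters.

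First I would record the elementary observation that for any $x\in\RR^d$, if the closed ball $\B(x,\zeta_{h_+}(x))$ contains at least $k$ of the sample points $X_1,\dots,X_n$, then the $k$-th nearest neighbour of $x$ lies within distance $\zeta_{h_+}(x)$, i.e.\ $R_k(x)\le\zeta_{h_+}(x)$. Taking the contrapositive uniformly in $x$ gives
\begin{align*}
    \PP\Big\{\sup_{x\in\RR^d}\frac{R_k(x)}{\zeta_{h_+}(x)}\ge 1\Big\}
    \le \PP\Big\{\exists\,x\in\RR^d:\ \frac1n\sum_{i=1}^n\mathds{1}\big(X_i\in\B(x,\zeta_{h_+}(x))\big)<\frac kn\Big\}.
\end{align*}
Since $h_+=2k/\big((1-\lambda_\tau^+)n\big)$, one has $k/n=(1-\lambda_\tau^+)h_+/2$, so the event on the right is exactly the one controlled in Proposition~\ref{prop.Bernstein.general}$(i)$ with $h=h_+$, $\lambda=\lambda_\tau^+$ and $\delta=n^{-\tau}$; moreover $\lambda_\tau^+\in(0,1)$ is immediate from its rational form. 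Hence it only remains to check that $n$ satisfies the sample-size hypothesis~\eqref{eq.Bernstein.nk.condition} for these choices, and the bound $n^{-\tau}$ follows.

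The second and main step is this verification. Put $\lambda=\lambda_\tau^+$ and $A:=2d+\tau+\log(K_0)/\log(2d)>0$, so that $\lambda=(4A+9)/(4A+12)$. Substituting $h=h_+$ turns the prefactor in~\eqref{eq.Bernstein.nk.condition} into $2d(1+\lambda/3)(1-\lambda)n/(\lambda^2 k)$ and the argument of the logarithm into $K_0^{1/(2d)}\,n^{\tau/(2d)}\,(n/k)$. Because $n\ge 2d$ we have $K_0^{1/(2d)}=n^{\log(K_0)/(2d\log n)}\le n^{\log(K_0)/(2d\log(2d))}$, and because $k\ge 1$ also $n/k\le n$; hence that argument is at most $n^{A/(2d)}$, so the logarithm is at most $(A/2d)\log n$. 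Using $k\ge\lceil\log n\rceil\ge\log n$, condition~\eqref{eq.Bernstein.nk.condition} is therefore implied by $\lambda^2\ge(1+\lambda/3)(1-\lambda)\,A$. Now $1-\lambda=3/(4A+12)$ and $1+\lambda/3=(16A+45)/\big(3(4A+12)\big)$, so $(1+\lambda/3)(1-\lambda)=(16A+45)/(4A+12)^2$ while $\lambda^2=(4A+9)^2/(4A+12)^2$; the inequality thus reduces to $(4A+9)^2\ge A(16A+45)$, i.e.\ $27A+81\ge 0$, which is plainly true. This establishes~\eqref{eq.Bernstein.nk.condition}, Proposition~\ref{prop.Bernstein.general}$(i)$ applies, and the proof is complete.

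I expect the last computation to be the crux: the peculiar rational value of $\lambda_\tau^+$ is exactly what is needed so that, after the crude bound $\log(\cdot)\le(A/2d)\log n$, the hypothesis~\eqref{eq.Bernstein.nk.condition} holds for \emph{every} admissible pair $(n,k)$ with $n\ge 2d$ and $k\ge\lceil\log n\rceil$, the surplus $27A+81$ quantifying the slack. The only other point to watch is the reduction step, where one must keep the open/closed-ball conventions for $\B(\cdot,\cdot)$, $R_k$ and $\zeta_{h_+}$ consistent so that the displayed event inclusion is valid (with the boundary case $R_k(x)=\zeta_{h_+}(x)$ absorbed harmlessly into the probability being bounded).
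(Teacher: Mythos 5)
Your proof is correct and mirrors the paper's own argument: invoke Proposition~\ref{prop.Bernstein.general}$(i)$ with $h=h_+$, $\lambda=\lambda_\tau^+$, $\delta=n^{-\tau}$, bound the logarithm's argument by $n^{A/(2d)}$ using $n\ge 2d$ and $k\ge\lceil\log n\rceil$, and thereby reduce the sample-size hypothesis~\eqref{eq.Bernstein.nk.condition} to the single inequality $\lambda^2\ge(1+\lambda/3)(1-\lambda)A$. Your final verification by direct substitution of $\lambda_\tau^+=(4A+9)/(4A+12)$, reducing the inequality to $27A+81\ge 0$, is in fact a touch more transparent than the paper's route, which solves the associated quadratic and bounds its positive root above by $\lambda_\tau^+$ via Bernoulli's inequality.
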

We further define 
\begin{align}
    \label{eq.def.btau}
    B_\tau := \frac{2}{1 - \lambda_\tau^+},
\end{align}
so that $h_+ = B_\tau k/n.$

\begin{proof}
    Applying Proposition \ref{prop.Bernstein.general} $(i)$ with $\delta = n^{-\tau},$ and $h = 2k/((1 - \lambda)n)$ with arbitrary $\lambda \in (0, 1),$ yields that for all $n$ satisfying
    \begin{align}
        \label{eq.n.condition.nearest.neighbours.bound.zeta}
        n \geq \frac{2d(1 + \lambda/3)(1 - \lambda)n}{\lambda^2k}\log\Big(K_0^{1/(2d)}n^{1 + \tau/(2d)}k^{-1}\Big),
    \end{align}
    it holds that
    \begin{align}
        \label{eq.bound.zeta.neighbour.hp}
        \PP\Bigg\{\inf_{x \in \RR^d}\frac{1}{n}\sum_{i=1}^n \mathds{1}(X_i \in \B(x, \zeta_{h_+}(x))) <\frac kn\Bigg\} \leq n^{-\tau},
    \end{align}
    where $K_0$ is defined in Proposition \ref{prop.Bernstein.general}. The high-probability bound \eqref{eq.bound.zeta.neighbour.hp} implies that
    \begin{align*}
        \PP\Bigg\{\sup_{x \in \RR^d}\frac{R_k(x)}{\zeta_{h_+}(x)} \geq 1\Bigg\} \leq n^{-\tau}.
    \end{align*}
    We now check that the right-hand side of \eqref{eq.n.condition.nearest.neighbours.bound.zeta} is always smaller than $2d$. Inequality \eqref{eq.n.condition.nearest.neighbours.bound.zeta} is equivalent to
    \begin{align}
        \label{eq.1111}
        \frac{(1 + \lambda/3)(1-\lambda)}{\lambda^2k}\log\Big(K_0n^{2d + \tau}k^{-2d}\Big) \leq 1.
    \end{align}
    For $n \geq k \geq \log n,$ using the expression of $K_0,$ the assumption that $K \geq 1,$ and $d\geq 1,$ we can check that for $n^\tau \geq (384e^2)^{-1},$ it holds that $\log(K_0n^{2d + \tau}k^{-2d})\geq 0.$ The latter then holds for any $\tau > 0$ and any $n \geq 1.$ Hence, we can assert that for \eqref{eq.1111} to hold, it is sufficient to have
    \begin{align*}
        \frac{(1 + \lambda /3)(1 - \lambda)}{\lambda^2}\log\Big(K_0n^{2d + \tau}\Big) \leq \log n.
    \end{align*}
    Using $a\log(b) = \log(b^a),$ exponentiating on both sides and rearranging the terms leads to 
    \begin{align*}
        1 \leq n^{\tfrac{\lambda^2}{(1 + \lambda/3)(1 - \lambda)} - 2d - \tau - \log(K_0)/\log(n)}.
    \end{align*}
    For the latter to hold, it is sufficient that the exponent be nonnegative. Since $n \geq 2d,$ a lower bound for the exponent in the above display is $\lambda^2/((1 + \lambda/3)(1 - \lambda)) - 2d - \tau - \log(K_0)/\log(2d).$ Rearranging the expression of the derived lower bound gives the sufficient condition under the form of a quadratic inequation
    \begin{align*}
        \lambda^2\Big(1 + \frac{a}{3}\Big) + \frac{2a}{3}\lambda - a.
    \end{align*}
    where $a:= 2d + \tau + \log(K_0)/\log(2d).$ It is straightforward to find that the positive root $z_+$ satisfies
    \begin{align*}
        z_+ = \frac{a}{a + 3}\bigg(\sqrt{4 + \frac 9a} - 1\bigg) \leq \frac{4a + 9}{4a + 12} = \lambda_\tau^+ < 1,
    \end{align*}
    where the inequality follows Bernoulli's inequality. This shows that with our choice of $\lambda_\tau^+,$ \eqref{eq.n.condition.nearest.neighbours.bound.zeta} holds for all $n \geq 2d.$ Hence, for all $k \geq \log n,$
    \begin{align*}
        \PP\Bigg\{\sup_{x \in \RR^d}\frac{R_k(x)}{\zeta_{h_+}(x)} \geq 1\Bigg\} \leq n^{-\tau}.
    \end{align*}
\end{proof}

Let $\P \in \mP.$ Let $X \sim \P,$ and  $Y = f(X) + \eps,$ where $\eps$ is an $\RR$-valued random variable that is independent from $X.$ Assume as well that $X$ satisfies the uniform noise condition \eqref{eq.uniform.noise}. Let $(X_1, Y_1), \dots, (X_n, Y_n)$ be $n$ i.i.d.\ copies of $(X, Y).$ For $k \in \NN$ and $x \in \RR^d,$ we denote by $R_k^{\P}(x) := \|x - X_k(x)\|.$ We define the events
\begin{align*}
    U_n^{\P}(k) := \Bigg\{\sup_{x \in \RR^d} \frac{R_k^{\P}(x)}{\zeta^{\P_{\sX}}_{h_+(k)}(x)} \leq 1\Bigg\}, \text{ and }U_n^{\P} := \bigcap_{k = \lceil \log n\rceil}^n U^{\P}_n(k).
\end{align*}

\begin{remark}
    \label{rem.bound.variable.k}
    While Proposition \ref{prop.neighbours.distance.bound.zeta} shows that $R_k(x) \leq \zeta_{h_+}(x)$ uniformly with high probability for each individual choice of $k \geq \log n,$ it can easily be extended to the case where $k = k(x) \in \{1, \dots, n\}$ depends on $x \in \RR^d.$ The fact that $k$ can now change values depending on $x$ demands one to pay the price that the $\sup$ in the probability is not on the whole $\RR^d$ anymore, but only on a subset $\Theta_n^*(k)$ defined as follows
    \begin{align*}
        \Theta_n(k) := \bigg\{x \in \RR^d: k(x) \geq \lceil\log n \rceil\bigg\}.
    \end{align*}
    Similarly, we have to pay the price of a factor $n$ in the probability, as can be seen from the union bound
    \begin{align*}
        \PP\Bigg\{\sup_{x \in \Theta_n^*}\frac{R_{k(x)}(x)}{\zeta_{h_+}} \geq 1\Bigg\} &\leq \PP\Bigg\{\bigcup_{\ell = \lceil \log n \rceil}^n\Bigg\{\sup_{x \in \RR^d}\frac{R_{\ell}(x)}{\zeta_{h_+}(x)} \geq 1\Bigg\} \Bigg\}\\
        &\leq \sum_{\ell = \lceil \log n \rceil}^n\PP\Bigg\{\sup_{x \in \RR^d}\frac{R_{\ell}(x)}{\zeta_{h_+}(x)}\geq 1\Bigg\}\\
        &\leq n^{1 - \tau}.
    \end{align*}
\end{remark}
\begin{proposition}
    \label{prop.neighbours.distance.lower.bound.zeta}
    Let $\tau > 0$ and $V_\tau := K_0^{1/(2d)}\vee(8d + 4\tau).$ If $k \geq \lceil V_\tau\log n\rceil,$ and $h_- = h_-(k) := k/(2n(1 + \lambda_\tau^-)),$ with 
    \begin{align*}
        \lambda_\tau^- := 1 + \sqrt{\frac 52},
    \end{align*}
    then,
    \begin{align*}
        \PP\Bigg\{\inf_{x \in \RR^d}\frac{R_k(x)}{\zeta_{h_-}(x)} \leq 1\Bigg\} \leq n^{-\tau}.
    \end{align*}
\end{proposition}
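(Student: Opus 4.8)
The plan is to run the argument of Proposition~\ref{prop.neighbours.distance.bound.zeta} in reverse, replacing the lower uniform Bernstein bound of Proposition~\ref{prop.Bernstein.general}$(i)$ by its upper counterpart, Proposition~\ref{prop.Bernstein.general}$(ii)$. The starting point is the elementary set identity: for every $x\in\RR^d$ and $r\ge0$, one has $R_k(x)\le r$ if and only if the closed ball $\B(x,r)$ contains at least $k$ of the sample points $X_1,\dots,X_n$. Applying this with $r=\zeta_{h_-}(x)$, and using that $h_-$ is defined so that $k/(nh_-)=2(1+\lambda_\tau^-)$, gives
\begin{align*}
  \Bigl\{\inf_{x\in\RR^d}\tfrac{R_k(x)}{\zeta_{h_-}(x)}\le1\Bigr\}
  =\Bigl\{\sup_{x\in\RR^d}\tfrac1{nh_-}\sum_{i=1}^n\mathds{1}\bigl(X_i\in\B(x,\zeta_{h_-}(x))\bigr)\ge 2(1+\lambda_\tau^-)\Bigr\},
\end{align*}
which is exactly the kind of event controlled by Proposition~\ref{prop.Bernstein.general}$(ii)$.

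I would then invoke Proposition~\ref{prop.Bernstein.general}$(ii)$ with $h=h_-$, $\delta=n^{-\tau}$ and $\lambda=\lambda_\tau^-$; the mismatch between the non-strict event above and the strict conclusion of that proposition is innocuous because the empirical count is integer valued and $k$ is large (one may equivalently take $\lambda$ infinitesimally below $\lambda_\tau^-$). The point of the value $\lambda_\tau^-=1+\sqrt{5/2}$ is that it is the positive root of $2\lambda^2-4\lambda-3=0$, i.e.\ the unique $\lambda>0$ solving $(1+\lambda/3)(1+\lambda)=\lambda^2$; this makes the prefactor in \eqref{eq.Bernstein.nk.lower.condition} collapse, since $h_-=k/(2n(1+\lambda_\tau^-))$ gives $\tfrac{4d(1+\lambda/3)}{\lambda^2 h_-}=\tfrac{8d(1+\lambda/3)(1+\lambda_\tau^-)}{\lambda^2}\cdot\tfrac nk$, which equals $8d\,n/k$ when $\lambda=\lambda_\tau^-$. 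Substituting and dividing by $n$, the sample-size hypothesis \eqref{eq.Bernstein.nk.lower.condition} then reduces to $k\ge 8d\log\!\bigl(2K_0^{1/(2d)}n^{1+\tau/(2d)}/k\bigr)$.

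Finally, I would deduce this inequality from $k\ge\lceil V_\tau\log n\rceil$, and here the two halves of $V_\tau=K_0^{1/(2d)}\vee(8d+4\tau)$ do complementary jobs: $V_\tau\ge K_0^{1/(2d)}$ gives $\log k\ge\tfrac1{2d}\log K_0+\log\log n$, which cancels the $K_0$-factor inside the logarithm, and $V_\tau\ge 8d+4\tau=8d(1+\tfrac\tau{2d})$ matches the leading term $8d(1+\tfrac\tau{2d})\log n$ of the right-hand side; the remaining contribution $8d(\log2-\log\log n)$ is nonpositive because the constraint $k\le n$ forces $8\log n\le n$, hence $n$ large and $\log\log n\ge\log2$. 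This closes the estimate with failure probability $n^{-\tau}$. The only real obstacle is bookkeeping: the bound $k\ge 8d\log(2K_0^{1/(2d)}n^{1+\tau/(2d)}/k)$ is essentially tight, so the precise calibration of $\lambda_\tau^-=1+\sqrt{5/2}$ and $V_\tau$ must be respected, and the $\ge$-versus-$>$ boundary case together with the smallest admissible $n$ need a moment's care; the rest is routine.
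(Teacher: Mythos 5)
Your proof follows the same route as the paper's: invoke Proposition~\ref{prop.Bernstein.general}$(ii)$ with $h=h_-$ and $\delta=n^{-\tau}$, translate the event on $\inf R_k/\zeta_{h_-}$ into a statement about the empirical ball-mass exceeding $2(1+\lambda)$, choose $\lambda_\tau^-$ to be the positive root of $(1+\lambda/3)(1+\lambda)=\lambda^2$ so that the prefactor in \eqref{eq.Bernstein.nk.lower.condition} collapses to $8d\,n/k$, and then check the resulting condition under $k\ge\lceil V_\tau\log n\rceil$. The only difference is that you actually carry out the final verification (showing each half of $V_\tau$ absorbs one piece of the logarithm, with the leftover $8d(\log 2-\log\log n)\le 0$ forced by $k\le n$) and explicitly flag the strict-versus-nonstrict boundary issue, both of which the paper glosses over as ``easy to check''; your bookkeeping is correct.
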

We further define
\begin{align}
    \label{eq.def.dtau}
    D_\tau := \frac{1}{2 + \sqrt{10}},
\end{align}
so that $h_- = D_\tau k/n.$

\begin{proof}
    Applying Proposition \ref{prop.Bernstein.general} $(ii)$ with $\delta = n^{-\tau}$ and $h = k/(2(1 + \lambda)n)$ with arbitrary $\lambda \in \RR_+^*,$ yields that for all $n$ satisfying
    \begin{align}
        \label{eq.n.condition.nearest.neighbours.bound.zeta.lower}
        n \geq \frac{8d(1 + \lambda/3)(1 + \lambda)n}{\lambda^2k}\log\Big(K_0^{1/(2d)}n^{1 + \tau/(2d)}k^{-1}\Big),
    \end{align}
    it holds that
    \begin{align}
        \label{eq.bound.zeta.neighbour.lower.hp}
        \PP\Bigg\{\sup_{x \in \RR^d}\frac{1}{n}\sum_{i=1}^n \mathds{1}(X_i \in \B(x, \zeta_{h_-}(x))) \geq \frac kn\Bigg\} \leq n^{-\tau},
    \end{align}
    where $K_0$ is defined in Proposition \ref{prop.Bernstein.general}. The high-probability bound \eqref{eq.bound.zeta.neighbour.lower.hp} implies that
    \begin{align*}
        \PP\Bigg\{\inf_{x \in \RR^d}\frac{R_k(x)}{\zeta_{h_-}(x)} \geq 1\Bigg\} \leq n^{-\tau}.
    \end{align*}
    Taking $\lambda_\tau^- = 1 + \sqrt{5}/2$ in \eqref{eq.n.condition.nearest.neighbours.bound.zeta.lower} leads to
    \begin{align*}
        k \geq 4\log\Big(K_0n^{2d + \tau}k^{-2d}\Big),
    \end{align*}
    and it is easy to check that this inequality is satisfied for all $n \geq 2d$ and $k \geq \lceil V_\tau \log n\rceil.$
\end{proof}
We define the events
\begin{align*}
    L_n^{\P}(k) := \Bigg\{\inf_{x \in \RR^d} \frac{R_k^{\P}(x)}{\zeta^{\P}_{h_+(k)}(x)}\geq 1\Bigg\},\text{ and }L_n^{\P} &:= \bigcap_{k= \lceil V_\tau\log n\rceil}^n L_n^{\P}(k).
\end{align*}
We also define the set
\begin{align*}
    \Delta_n^{\P}(c, k) := \bigg\{x \in \RR^d: p(x) \geq \frac{ck}{n}\bigg\}.
\end{align*}

\begin{lemma}
    \label{lem.bound.neighbours.density}
    Let $\tau > 0$ and $p$ be the density of $\P.$ If $k \geq \lceil\log n\rceil,$ then, for all $n \geq 2d,$
    \begin{align*}
         U_n^{\P}(k) \subseteq \Bigg\{\sup_{x \in \Delta_n^{\P}(c_u, k)}p(x)R_k(x)^d \leq \frac{B_\tau k}{a_- n}\Bigg\},
    \end{align*}
    with $c_u := B_\tau/(a_-r_-^d).$
\end{lemma}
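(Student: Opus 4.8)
The plan is purely deterministic given the event $U_n^{\P}(k)$: first bound the population radius $\zeta_{h_+(k)}^{\P_{\sX}}(x)$ from above in terms of $p(x)$ using the minimal mass property \eqref{eq.minimal.mass}, then use that $R_k(x) \le \zeta_{h_+(k)}^{\P_{\sX}}(x)$ on $U_n^{\P}(k)$, and combine the two.

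For the first step, fix $x \in \Delta_n^{\P}(c_u, k)$, so that $p(x) \ge c_u k/n > 0$, and recall $h_+(k) = B_\tau k/n$. Consider the candidate radius $r := \big(h_+(k)/(a_- p(x))\big)^{1/d} > 0$. The definition $c_u := B_\tau/(a_- r_-^d)$ is chosen precisely so that $r \le r_-$: indeed $r^d = B_\tau k/(a_- n\, p(x)) \le B_\tau k/(a_- n \cdot c_u k/n) = r_-^d$. Hence \eqref{eq.minimal.mass} is applicable at radius $r$ and yields $\P\{\B(x, r)\} \ge a_- p(x) r^d = h_+(k)$. Since $r$ then belongs to the set over which the infimum defining $\zeta$ is taken, we get $\zeta_{h_+(k)}^{\P_{\sX}}(x) \le r$, and therefore
\begin{align*}
    p(x)\,\zeta_{h_+(k)}^{\P_{\sX}}(x)^d \le p(x)\, r^d = \frac{h_+(k)}{a_-} = \frac{B_\tau k}{a_- n}.
\end{align*}

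For the second step, on $U_n^{\P}(k)$ we have $R_k(x) = R_k^{\P}(x) \le \zeta_{h_+(k)}^{\P_{\sX}}(x)$ for every $x \in \RR^d$; in particular, $p(x) R_k(x)^d \le p(x)\,\zeta_{h_+(k)}^{\P_{\sX}}(x)^d \le B_\tau k/(a_- n)$ for all $x \in \Delta_n^{\P}(c_u, k)$. Taking the supremum over $\Delta_n^{\P}(c_u, k)$ gives the claimed inclusion. The only point that demands care is the bookkeeping of the constant $c_u$, which must be exactly large enough that the candidate radius $r$ stays below the threshold $r_-$ beyond which \eqref{eq.minimal.mass} is no longer guaranteed; once this is arranged, the argument is immediate. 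The hypotheses $n \ge 2d$ and $k \ge \lceil \log n\rceil$ play no role in the set inclusion itself — they only enter through Proposition \ref{prop.neighbours.distance.bound.zeta}, which controls $\PP\big(U_n^{\P}(k)^c\big)$ and is invoked separately when the lemma is used.
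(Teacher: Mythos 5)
Your proof is correct and follows essentially the same route as the paper: the minimal mass property is used to bound $\zeta_{h_+(k)}$ in terms of $p(x)$, the threshold $c_u$ is exactly what ensures the relevant radius stays below $r_-$ so that \eqref{eq.minimal.mass} applies, and the conclusion then follows from $R_k \le \zeta_{h_+(k)}$ on $U_n^{\P}(k)$. The only cosmetic difference is that you construct an explicit candidate radius $r$ and verify $\P\{\B(x,r)\}\ge h_+(k)$ so that $\zeta_{h_+(k)}(x)\le r$ follows from the infimum definition, whereas the paper starts from $h_+=\P\{\B(x,\zeta_{h_+}(x))\}$ and rearranges; your version is a touch more self-contained since it does not invoke exact attainment of the mass level at $\zeta_{h_+}(x)$.
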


\begin{proof}
    Since $\P \in \mP,$ we can apply \eqref{eq.minimal.mass} to obtain that for all $x \in \RR^d,$
    \begin{align*}
        h_+ = \P\{\B(x, \zeta_{h_+}(x))\} \geq a_-\big(\zeta_{h_+}(x)\wedge r_-\big)^dp(x).
    \end{align*}
    By rearranging the previously displayed equation, we obtain
    \begin{align*}
        \zeta_{h_+}(x)\wedge r_+ \leq \bigg(\frac{h_+}{a_-p(x)}\bigg)^{1/d}.
    \end{align*}
    We observe that if $x \in \Delta_n^{\P}(c_u, k),$ then $\P\{\B(x, r_-)\} \geq a_-r_-^dp(x) \geq B_\tau k/n = h_+.$ Hence, $\zeta_{h_+}(x) \leq r_-$ and for all $x \in \Delta_n^{\P}(c_u, k),$ on $U_n^{\P}(k),$
    \begin{align*}
        R_k^d(x) \leq \frac{B_\tau k}{a_-np(x)},
    \end{align*}
    which implies the result.
\end{proof}

\begin{remark}
    \label{rem.bound.variable.k.density}
    Given a function $k \colon \RR^d \to \{1, \dots, n\}$ and a constant $c>0,$ we define the set
    \begin{align*}
        \Theta_n^{\P}(c, k) := \Bigg\{x \in \RR^d: p(x)\geq \frac{ck(x)}{n}\Bigg\}.
    \end{align*}
    As a direct implication of Lemma \ref{lem.bound.neighbours.density}, we obtain,
    \begin{align*}
        \Bigg\{\sup_{x \in \Theta_n^{\P}(c, k)}\frac{p(x)R_{k(x)}(x)^d}{k(x)} \geq \frac{B_\tau}{a_- n}\Bigg\} &= \bigcup_{i=\lceil\log n\rceil}^n\Bigg\{\sup_{x \in k^{-1}(i)\cap \Delta_n^{\P}(c, i)}p(x)R_i(x)^d \geq \frac{B_\tau i}{a_- n}\Bigg\}\\
        &\subseteq \bigcup_{i=\lceil \log n \rceil}^n\Bigg\{\sup_{x \in \Delta_n^{\P}(c, i)} p(x)R_i(x)^d \geq \frac{B_\tau i}{a_- n}\Bigg\}\\
        &= \bigg(\bigcap_{i=\lceil \log n\rceil}^n U_n^{\P}(i)\bigg)^c = U_n^{\P c}.
    \end{align*}
    Hence,
    \begin{align*}
        \PP\Bigg\{\sup_{x \in \Theta_n^{\P}(c, k)}\frac{p(x)R_{k(x)}(x)^d}{k(x)} \leq \frac{B_\tau}{a_- n}\Bigg\} \geq 1 - n^{1-\tau}.
    \end{align*}
\end{remark}

\begin{lemma}
    \label{lem.lower.bound.neighbours.density}
    Let $\tau > 0.$ If $k \geq \lceil V_\tau\log n\rceil,$
    \begin{align*}
        \Bigg\{\inf_{x \in \Delta_n^{\P}(c_{\ell}, k)}p(x)R_k(x)^d \leq \frac{D_\tau k}{a_+ n}\Bigg\} \subseteq L_n^{\P}(k)^c,
    \end{align*}
    where $c_\ell := D_\tau/(a_-(r_+\wedge r_-)^d).$
\end{lemma}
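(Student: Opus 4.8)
The plan is to establish this as a purely deterministic set inclusion, by proving a pointwise lower bound on $p(x)R_k(x)^d$ that holds on the event $L_n^{\P}(k)$, running the argument of Lemma~\ref{lem.bound.neighbours.density} in reverse with the roles of the minimal and maximal mass properties interchanged. Recall from Proposition~\ref{prop.neighbours.distance.lower.bound.zeta} that $L_n^{\P}(k)$ is the event on which $R_k^{\P}(x) \geq \zeta_{h_-(k)}^{\P}(x)$ for every $x \in \RR^d$, with $h_-(k) = D_\tau k/n$. Hence it suffices to show that, on $L_n^{\P}(k)$, one has $p(x)R_k^{\P}(x)^d \geq D_\tau k/(a_+ n)$ for every $x \in \Delta_n^{\P}(c_\ell, k)$; taking the infimum over such $x$ and passing to complements then gives the stated inclusion.

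The pointwise bound is obtained in two steps. First I would control the threshold radius $\zeta_{h_-}^{\P}(x)$ from above: fixing $x \in \Delta_n^{\P}(c_\ell, k)$ and writing $\rho_0 := r_+ \wedge r_-$, the constraint $p(x) \geq c_\ell k/n$ with $c_\ell = D_\tau/(a_-\rho_0^d)$ together with the minimal mass property \eqref{eq.minimal.mass} applied at radius $\rho_0 \leq r_-$ gives $\P\{\B(x,\rho_0)\} \geq a_-p(x)\rho_0^d \geq D_\tau k/n = h_-$, so that monotonicity of $r \mapsto \P\{\B(x,r)\}$ and the infimum definition of $\zeta$ force $\zeta_{h_-}^{\P}(x) \leq \rho_0 \leq r_+$. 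Since $p(x) > 0$, the point $x$ lies in $\supp(\P)$, so the maximal mass property \eqref{eq.maximal.mass} applies at the admissible radius $\zeta_{h_-}^{\P}(x) \leq r_+$, giving $\P\{\B(x,\zeta_{h_-}^{\P}(x))\} \leq a_+p(x)\zeta_{h_-}^{\P}(x)^d$; combined with $\P\{\B(x,\zeta_{h_-}^{\P}(x))\} \geq h_-$ (from the infimum definition of $\zeta$ and right-continuity of $r\mapsto\P\{\B(x,r)\}$ for closed balls) this yields $\zeta_{h_-}^{\P}(x)^d \geq h_-/(a_+p(x)) = D_\tau k/(a_+ n p(x))$. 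On $L_n^{\P}(k)$ we then chain $R_k^{\P}(x)^d \geq \zeta_{h_-}^{\P}(x)^d \geq D_\tau k/(a_+ n p(x))$, i.e. $p(x)R_k^{\P}(x)^d \geq D_\tau k/(a_+ n)$, which is the claimed pointwise bound.

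The only genuinely delicate point is the bookkeeping of which mass property is licensed at which radius, together with the calibration of the constant $c_\ell$: the minimal mass property is only available for radii up to $r_-$, so the membership condition defining $\Delta_n^{\P}(c_\ell, k)$ must be strong enough to push $\zeta_{h_-}^{\P}(x)$ below $r_+ \wedge r_-$ — which is exactly why $c_\ell$ must carry the factor $(r_+\wedge r_-)^{-d}$ — before the maximal mass property can be invoked. Once $\zeta_{h_-}^{\P}(x)$ is known to sit below both thresholds, the remainder is the same elementary algebra as in Lemma~\ref{lem.bound.neighbours.density}, merely run in the opposite direction. The hypothesis $k \geq \lceil V_\tau \log n\rceil$ is inherited from Proposition~\ref{prop.neighbours.distance.lower.bound.zeta}, where it guarantees that $L_n^{\P}(k)$ is a high-probability event; it plays no role in the deterministic inclusion itself.
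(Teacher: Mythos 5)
Your proof is correct and follows essentially the same route as the paper's: restrict to the event $L_n^{\P}(k)$ (where $R_k^{\P}(x)\geq\zeta_{h_-}^{\P}(x)$), use \eqref{eq.minimal.mass} with the calibration of $c_\ell$ to force $\zeta_{h_-}^{\P}(x)\leq r_+\wedge r_-$ on $\Delta_n^{\P}(c_\ell,k)$, and then invoke \eqref{eq.maximal.mass} at that radius to lower bound $\zeta_{h_-}^{\P}(x)^d$ by $D_\tau k/(a_+np(x))$. The only difference is cosmetic: you establish $\zeta_{h_-}^{\P}(x)\leq r_+\wedge r_-$ directly via monotonicity of $r\mapsto\P\{\B(x,r)\}$, whereas the paper argues by contradiction; you also correctly note that the hypothesis $k\geq\lceil V_\tau\log n\rceil$ is not used in the deterministic inclusion itself.
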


\begin{proof}
    We work on the event $L_n^{\P}(k).$ For all $x \in \supp(\P)$ such that $\zeta_{h_-}(x) \leq r_+,$ the maximal mass property \eqref{eq.maximal.mass} implies that
    \begin{align*}
        h_- = \P\{\B(x, \zeta_{h_-}(x))\} \leq a_+p(x)\zeta_{h_-}(x)^d.
    \end{align*}
    The latter, once rearranged, yields
    \begin{align*}
        \zeta_{h_-}(x) \geq \bigg(\frac{D_\tau k}{a_+np(x)}\bigg)^{1/d}.
    \end{align*}
    It remains to prove that if $x \in \Delta_n^{\P}(c_{\ell}, k),$ then $\zeta_{h_-}(x) \leq r_+.$ Let $x \in \Delta_n^{\P}(c_\ell, k)$ and assume that $\zeta_{h_-}(x) > r_+.$ The minimal mass property \eqref{eq.minimal.mass} implies that
    \begin{align*}
        h_- = \P\{\B(x, \zeta_{h_-}(x))\} \overset{(i)}{>} \P\{\B(x, r_+)\} \geq a_-(r_-\wedge r_+)^dp(x) \geq \frac{D_\tau k}{n} = h_-,
    \end{align*}
    which is a contradiction. Hence, for all $x \in \Delta_n^{\P}(c_\ell, k),$ it holds that $\zeta_{h_-}(x) \leq r_+,$ and a fortiori, that
    \begin{align*}
        L_n^{\P}(k) \subseteq \Bigg\{\inf_{x \in \Delta_n^{\P}(c_\ell, k)} p(x) R_k(x)^d \geq \frac{D_\tau k}{a_+ n}\Bigg\}.
    \end{align*}
\end{proof}
Let $\wh f$ be a local $k$-NN regression estimator with neighbour function $k \colon \RR^d \to \{1, \dots, n\}$ independent from $Y_i|X_i$ for $1 \leq i \leq n.$ We define the associated pointwise average function 
\begin{align}
    \label{eq.average.knn}
    \ol f\colon x \mapsto \E\big[\wh f(x)|X_1, \dots, X_n\big].
\end{align}

\begin{proposition}\label{prop.variance.bound.general.case}
Let $\wh f$ be as in \eqref{eq.weighted.knn}, with neighbour function $k \colon \RR^d \to \{1, \dots, n\}.$ Let $\ol f$ be defined by \eqref{eq.average.knn}.
Then, for all $n\geq 2d,$ it holds that
\begin{align*}
    \PP\Bigg\{\sup_{x \in \RR^d}k(x)^{1/2}\big|\wh f(x) - \ol f(x)\big| \geq \eta\Bigg\} &\leq K_dn^{2d + 1}e^{-H \eta^2},
\end{align*}
where $K_d = 2(25/d)^d,$ and $H = \lambda_0^2/(8\sigma_0).$
\end{proposition}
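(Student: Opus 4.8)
The plan is to fix the design $X^n$, reduce the supremum over $x \in \RR^d$ to a maximum over polynomially many normalized noise sums, apply a pointwise exponential tail inequality to each, and finish with a union bound.

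First I would separate the noise from the design. Set $\eps_i := Y_i - f(X_i)$ for $1 \le i \le n$; by Model \eqref{eq.model} and the i.i.d.\ sampling, $(\eps_1, \dots, \eps_n)$ is a vector of i.i.d.\ centered random variables, independent of $X^n$, each satisfying \eqref{eq.uniform.noise}. Conditionally on $X^n$, every $x \in \RR^d$ induces a permutation $\sigma_x$ of $\{1, \dots, n\}$ with $X_i(x) = X_{\sigma_x(i)}$, so that $Y_i(x) - f(X_i(x)) = \eps_{\sigma_x(i)}$; since $(\eps_i)_i$ is independent of $X^n$ one gets $\ol f(x) = \E[\wh f(x) \mid X^n] = k(x)^{-1}\sum_{i=1}^{k(x)} f(X_{\sigma_x(i)})$ from \eqref{eq.weighted.knn} and \eqref{eq.average.knn}, and therefore
\begin{align*}
    k(x)^{1/2}\big(\wh f(x) - \ol f(x)\big) = \frac{1}{k(x)^{1/2}}\sum_{i=1}^{k(x)} \eps_{\sigma_x(i)}.
\end{align*}
The right-hand side depends on $x$ only through the pair $(\sigma_x, k(x))$, indeed only through the index set $\{\sigma_x(1), \dots, \sigma_x(k(x))\}$.

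The combinatorial heart of the argument is then to bound the number of distinct such pairs. The map $x \mapsto \sigma_x$ is constant on each full-dimensional cell of the arrangement of the $\binom n2$ bisecting hyperplanes $\{x : \|x - X_i\| = \|x - X_j\|\}$, $1 \le i < j \le n$, so $\mW := \{\sigma_x : x \in \RR^d\}$ is finite; by the Milnor--Thom theorem (cf.\ Theorem~12.2 in \cite{Biau2015Lectures} and the discussion in Section~\ref{sec.proof.outline}), this arrangement has at most $\sum_{\ell=0}^d \binom{\binom n2}{\ell} \le (25/d)^d n^{2d}$ cells for $n \ge 2d$, hence $|\mW| \le (25/d)^d n^{2d}$. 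Since $k(x) \in \{1, \dots, n\}$, this gives
\begin{align*}
    \sup_{x \in \RR^d} k(x)^{1/2}\big|\wh f(x) - \ol f(x)\big| \le \max_{\sigma \in \mW}\ \max_{1 \le j \le n}\ \frac{1}{j^{1/2}}\bigg|\sum_{i=1}^j \eps_{\sigma(i)}\bigg|.
\end{align*}

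Finally, for each fixed pair $(\sigma, j)$ and conditionally on $X^n$ (under which the $\eps_i$ keep their i.i.d.\ law), $j^{-1/2}\sum_{i=1}^j \eps_{\sigma(i)}$ is a normalized sum of $j$ i.i.d.\ centered variables, so Assumption \ref{ass.uniform.exponential} together with the Chernoff bound of Theorem~12.2 in \cite{Biau2015Lectures} yields $\PP\{ j^{-1/2}|\sum_{i=1}^j \eps_{\sigma(i)}| \ge \eta \mid X^n\} \le 2 e^{-H\eta^2}$ with $H = \lambda_0^2/(8\sigma_0)$. A union bound over the at most $|\mW|\,n \le (25/d)^d n^{2d+1}$ pairs then gives $\PP\{ \sup_x k(x)^{1/2}|\wh f(x) - \ol f(x)| \ge \eta \mid X^n \} \le 2(25/d)^d n^{2d+1} e^{-H\eta^2} = K_d n^{2d+1} e^{-H\eta^2}$, and, as this bound is free of $X^n$, taking expectations removes the conditioning. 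I expect the combinatorial step to be the main obstacle: one must control the neighbour orderings uniformly in $x$ while simultaneously accounting for the fact that $k(\cdot)$ is itself an arbitrary function of $x$ (including ties on the lower-dimensional faces of the arrangement, which only affect which index sets can occur, not the polynomial count), and must track the constants carefully enough to land on $K_d = 2(25/d)^d$; the per-term concentration and the union bound are then routine.
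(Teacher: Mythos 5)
Your proposal is correct and follows essentially the same route as the paper's proof: condition on $X^n$, reduce the supremum to a maximum over the finitely many pairs (neighbour permutation, number of neighbours), bound the number of permutations by $(25/d)^d n^{2d}$ via Milnor--Thom / Theorem~12.2 of \cite{Biau2015Lectures}, apply a per-term exponential tail inequality under Assumption~\ref{ass.uniform.exponential}, union bound over the $\le (25/d)^d n^{2d+1}$ pairs, and integrate out $X^n$. The only slip is a citation: the pointwise exponential tail bound in the paper is Lemma~12.1 of \cite{Biau2015Lectures} (Theorem~12.2 there is used only for the Milnor--Thom cardinality bound), but this does not affect the validity of the argument.
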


\begin{proof}
For any $x \in \RR^d$, we have
\begin{align*}
\wh f(x)-\ol f(x)=\sum_{i=1}^nk(x)^{-1}[Y_i(x)-f(X_i(x))]\mathds{1}(i \leq k(x)).
\end{align*}
Let $Z_i(x) := Y_i(x) - f(X_i(x))$ for all $1 \leq i \leq n$ and all $x \in \RR^d.$ By Proposition 8.1 in \cite{Biau2015Lectures} and the regression model \eqref{eq.model}, conditionally on $X_1,\ldots,X_n$, the random variables $\{Z_i(x)\}_{i=1}^n$ are mutually independent with zero mean. By assumption, they satisfy the uniform noise condition
\begin{align}\label{equ::uniformnoise}
\sup_{x^n\in \RR^{d\times n}} 
\E \Big[ \exp\big(\lambda_0|Z_i(x)|\big) \big| X^n = x^n \Big]
\leq \sigma_0 
< \infty,
\end{align}
where $\lambda_0$ and $c_0$ are some constants, and $x^n \in \RR^{d\times n}.$ 
By Lemma 12.1 in \cite{Biau2015Lectures}, 
for any $x \in \RR^d,$ and any $\eta \leq \min(1, 2\sigma_0) / \lambda_0$, it holds that
\begin{align}
    \label{eq.Chernoff.bound.knn}
    \PP \Big\{\big|\wh f(x) - \ol f(x)\big| \geq \eta \Big| X^n = x^n \Big\}
    \leq 2 \exp \bigg( - \frac{\eta^2 \lambda_0^2k(x)}{8 \sigma_0} \bigg).
\end{align}
To derive a high-probability bound that is uniform over $x \in \RR^d,$ we introduce the following set of permutations
\begin{align*}
    \mW(x^n) := \bigg\{\sigma \in \mS_n: \exists x \in \RR^d,\ X_i(x) = X_{\sigma(i)},\ \forall i \in \{1, \dots, n\}\bigg\},
\end{align*}
where $\mS_n$ is the symmetric group over $\{1, \dots, n\}.$ Theorem 12.2 in \cite{Biau2015Lectures} states that for all $n \geq 2d,$ and all $x^n \in \RR^{d\times n},$ the cardinality of $\mW(x^n)$ is bounded by
\begin{align*}
    \ell(x^n) := \card(\mW(x^n)) \leq \Big(\frac{25}{d}\Big)^dn^{2d}.
\end{align*}
In particular, given $x^n \in \RR^{d\times n},$ we can define the equivalence relationship $\sim_{x^n}$ on $\RR^d$ as, for all $z_1, z_2 \in \RR^d,$
\begin{align*}
    \big(z_1 \sim_{x^n} z_2\big) \Longleftrightarrow \Big(\forall i \in \{1, \dots, n\}: x_i(z_1) = x_i(z_2)\Big),
\end{align*}
where $x_i(z)$ is the $i$-th nearest neighbour of $z$ among $\{x_1, \dots, x_n\}.$ Further, $\RR^d/\sim_{x^n}$ has cardinality at most $\card(\mW(x^n)),$ and is in bijection with $\mW(x^n).$ Let then $\{A_\sigma\}_{\sigma \in \mW(x^n)}$ be the elements of $\RR^d/\sim_{x^n}.$ We can now derive a high probability uniform upper bound by partitioning $\RR^d$ according to $\sim_{x^n},$ using the union bound, and applying \eqref{eq.Chernoff.bound.knn} as follows 
\begin{align*}
    \PP\Bigg\{\sup_{x \in \RR^d}&k(x)^{1/2}\big|\wh f(x) - \ol f(x)\big| \geq \eta\Big| X^n = x^n\Bigg\} \\
    &\leq \PP\Bigg\{\bigcup_{\sigma \in \mW(X^n)}\Bigg\{\sup_{x \in A_\sigma}\Big|\sum_{i=1}^n \frac{k(x)^{1/2}}{k(x)}\big(Y_{\sigma(i)} - f(X_{\sigma(i)})\big)\Big| \geq \eta \bigg| X^n = x^n\Bigg\}\Bigg\}\\
    &\leq \sum_{\sigma \in \mW(x^n)}\PP\Bigg\{\sup_{\ell \in \{1, \dots, n\}}\Big|\sum_{i=1}^\ell \ell^{-1/2}\big(Y_{\sigma(i)} - f(X_{\sigma(i)})\big)\Big| \geq \eta \bigg| X^n = x^n\Bigg\}\\
    &\leq \sum_{\sigma \in \mW(x^n)}\sum_{\ell = 1}^n\PP\Bigg\{\Big|\sum_{i=1}^\ell \ell^{-1/2}\big(Y_{\sigma(i)} - f(X_{\sigma(i)})\big)\Big| \geq \eta \bigg| X^n = x^n\Bigg\}\\
    &\leq \sum_{\sigma \in \mW(x^n)}\sum_{\ell = 1}^n 2\exp\bigg(-\frac{\eta^2\lambda_0^2}{8c_0}\bigg)\\
    &\leq 2\Big(\frac{25}d\Big)^dn^{2d+1} \exp\bigg(-\frac{\eta^2\lambda_0^2}{8c_0}\bigg).
\end{align*}
We finish the proof by taking the expectation with regards to $X_1, \dots, X_n$ on both sides.
\end{proof}

\begin{corollary}
    \label{cor.variance.bound.nice.version}
    Under the conditions of Proposition \ref{prop.variance.bound.general.case}, we have for all $\tau > 0,$ all $n \geq 3 \vee 2d,$
    \begin{align*}
        \PP\Bigg\{\sup_{x \in \RR^d}k(x)^{1/2}\big|\wh f(x) - \ol f(x)\big|\geq \sqrt{C_\tau \log n}\Bigg\} \leq n^{-\tau},
    \end{align*}
    with 
    \begin{align*}
        C_\tau := \frac{\log K_d}{H}\bigg(1 + \frac{2d + \tau + 1 }{\log(K_d))}\bigg)
    \end{align*} where $H$ and $K_d$ are defined in Proposition \ref{prop.variance.bound.general.case}.
\end{corollary}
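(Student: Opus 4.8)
The plan is to obtain Corollary~\ref{cor.variance.bound.nice.version} as a direct consequence of Proposition~\ref{prop.variance.bound.general.case} by calibrating the free parameter $\eta$ so that the polynomial prefactor $n^{2d+1}$ gets absorbed. Concretely, I would apply Proposition~\ref{prop.variance.bound.general.case} with $\eta = \sqrt{C_\tau \log n}$, which yields
\begin{align*}
    \PP\Bigg\{\sup_{x \in \RR^d}k(x)^{1/2}\big|\wh f(x) - \ol f(x)\big|\geq \sqrt{C_\tau \log n}\Bigg\} \leq K_d\, n^{2d+1} e^{-H C_\tau \log n} = K_d\, n^{\,2d + 1 - H C_\tau},
\end{align*}
where the last identity uses $e^{-HC_\tau \log n} = n^{-HC_\tau}$ (with $\log$ the natural logarithm, as elsewhere in the paper).

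The second step is to unwind the definition of $C_\tau$. Since $C_\tau = \tfrac{\log K_d}{H}\big(1 + \tfrac{2d + \tau + 1}{\log K_d}\big)$, one has $H C_\tau = \log K_d + 2d + \tau + 1$, hence $2d + 1 - H C_\tau = -\tau - \log K_d$, and the bound above becomes $K_d\, n^{-\tau - \log K_d} = \big(K_d\, n^{-\log K_d}\big)\, n^{-\tau} = e^{\log K_d\,(1 - \log n)}\, n^{-\tau}$. It then suffices to verify $e^{\log K_d\,(1 - \log n)} \leq 1$: for $n \geq 3$ we have $\log n \geq \log 3 > 1$, so $1 - \log n < 0$, and — assuming as we may that $\log K_d \geq 0$ (if $K_d < 1$ one replaces $K_d$ by a larger constant $\geq 1$ in Proposition~\ref{prop.variance.bound.general.case}, which only weakens the bound and correspondingly redefines $C_\tau$, exactly as was done for the constant $K$ in Proposition~\ref{prop.covering.number}) — the exponent $\log K_d\,(1 - \log n)$ is nonpositive. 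The requirement $n \geq 2d$ is inherited from Proposition~\ref{prop.variance.bound.general.case}, and $n \geq 3$ is precisely what is used above, so the hypothesis $n \geq 3 \vee 2d$ of the corollary covers both.

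There is no substantial obstacle here: the corollary is a cosmetic recasting of Proposition~\ref{prop.variance.bound.general.case} in which $\eta$ is tuned to trade the factor $n^{2d+1}$ for an arbitrary polynomial decay $n^{-\tau}$. The only points demanding a little care are the arithmetic bookkeeping of the constant $C_\tau$ — in particular checking that $H C_\tau$ equals exactly $\log K_d + 2d + \tau + 1$, so that the exponents of $n$ cancel — and the harmless normalisation $K_d \geq 1$ needed to discard the residual factor $K_d\, n^{-\log K_d}$ once $n \geq 3$.
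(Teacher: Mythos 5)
Your proof is correct and takes essentially the same route as the paper: you plug $\eta = \sqrt{C_\tau\log n}$ directly into Proposition~\ref{prop.variance.bound.general.case} and simplify, whereas the paper first solves for the threshold $\eta_-$ at which the bound equals $n^{-\tau}$ and then checks $\sqrt{C_\tau\log n}\geq \eta_-$ using $\log n\geq 1$; both arguments reduce to the same arithmetic identity $HC_\tau=\log K_d+2d+\tau+1$ and the same two facts $n\geq 3$ and $\log K_d\geq 0$. You also make explicit the normalisation $K_d\geq 1$, which the paper uses implicitly (in the step $\eta\geq\eta_-$, the sign of $\log K_d$ matters); this is a genuine, if minor, point of care since $K_d=2(25/d)^d$ does drop below $1$ for large enough $d$, and your suggested fix of enlarging $K_d$ is exactly right.
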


\begin{proof}
    Once rearranged, the conclusion of Proposition \ref{prop.variance.bound.general.case} reads
    \begin{align*}
        \PP\Bigg\{\forall x\in\RR^d, \ \big|\wh f(x) - \ol f(x)\big| \leq \sqrt{\frac{1}{Hk(x)}\log\Big(\frac{K_dn^{2d + 1}}{\delta}\Big)}\Bigg\} \geq 1 - \delta.
    \end{align*}
    We now apply this result with 
    \begin{align*}
        \eta_- \geq \sqrt{\frac{\log K_d}H\bigg(1 + \frac{2d + \tau + 1}{\log K_d}\log n\bigg)}
    \end{align*}
    to obtain
    \begin{align*}
        \PP\Bigg\{\sup_{x \in \RR^d}k(x)^{1/2}|\wh f(x) - \ol f(x)| \geq \eta_-\Bigg\} \leq n^{-\tau}.
    \end{align*}
    Finally, since $n\geq 3, \ \log n \geq 1$ and we can pick
    \begin{align*}
        \eta = \sqrt{C_\tau\log n} = \sqrt{\frac{\log K_d}{H}\bigg(1 + \frac{2d + \tau + 1}{\log K_d}\bigg)\log n} \geq \eta_-,
    \end{align*}
    and obtain the claimed result.
\end{proof}
We define the event
\begin{align*}
    V_n^{\P}(\wh f) := \Bigg\{\sup_{x \in \RR^d}k(x)^{1/2}|=\big|\wh f(x) - \ol f(x)\big| \leq \sqrt{C_\tau\log n}\Bigg\}.
\end{align*}

\begin{lemma}
    \label{lem.bias.term}
    If $f \in \mH_{\beta}(F, L),$ then, for all $x \in \RR^d,$
        \begin{align*}
            |\ol f(x) - f(x)| \leq 2F \wedge L \sum_{i=1}^{k(x)} k(x)^{-1} R_i(x)^\beta,
        \end{align*}
        Additionally, if $\P \in \mP,$ and $k(x) \geq \lceil \log n\rceil$ for all $x \in \RR^d,$ then
        \begin{align*}
            \bigcap_{i\in k(\RR^d)}U_n^{\P}(i) \subseteq \Bigg\{\sup_{x \in \Theta_n^{\P}(c_u, k)}\frac{p(x)^{\beta/d}|\ol f(x) - f(x)|}{k(x)^{\beta/d}} \leq \bigg(\frac{B_\tau}{a_- n}\bigg)^{\beta/d}\Bigg\}.
        \end{align*}
\end{lemma}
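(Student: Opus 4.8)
The plan is to treat the two assertions separately: the first is a deterministic bias computation, and the second combines it with the nearest-neighbour radius control already available from Lemma~\ref{lem.bound.neighbours.density}.

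For the first inequality, I would start by making $\ol f$ explicit. Conditionally on $X^n$ the value $k(x)$ is fixed, and by the regression model~\eqref{eq.model} together with the conditional mean-zero property of the noise (exactly as invoked in the proof of Proposition~\ref{prop.variance.bound.general.case}, where $Z_i(x) = Y_i(x) - f(X_i(x))$ is shown to be mutually independent with zero mean given $X^n$), one gets $\ol f(x) = k(x)^{-1}\sum_{i=1}^{k(x)} f(X_i(x))$. The triangle inequality then gives $|\ol f(x) - f(x)| \le k(x)^{-1}\sum_{i=1}^{k(x)}|f(X_i(x)) - f(x)|$. Each summand admits two bounds: $2\|f\|_\infty \le 2F$ since $f \in \mH_\beta(F,L)$, and $L\|X_i(x) - x\|^\beta = L R_i(x)^\beta$ by the H\"older condition~\eqref{eq.holder}. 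Bounding every summand by the first quantity yields $|\ol f(x)-f(x)|\le 2F$; bounding every summand by the second yields the sum expression; taking the minimum is the claim.

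For the event inclusion, I would first use the monotonicity $R_1(x) \le \dots \le R_n(x)$ (and $\beta>0$) to note $\sum_{i=1}^{k(x)} k(x)^{-1} R_i(x)^\beta \le R_{k(x)}(x)^\beta$, so that the first part already gives $|\ol f(x) - f(x)| \le L R_{k(x)}(x)^\beta$ for every $x$. It then remains to control $R_{k(x)}(x)$ on the intersection event. Fix $x \in \Theta_n^{\P}(c_u, k)$ and set $i := k(x)$; by hypothesis $i \ge \lceil\log n\rceil$, and $p(x) \ge c_u k(x)/n = c_u i/n$ means precisely $x \in \Delta_n^{\P}(c_u, i)$. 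On $U_n^{\P}(i)$, Lemma~\ref{lem.bound.neighbours.density} gives $p(x) R_i(x)^d \le B_\tau i/(a_- n)$, i.e.\ $R_{k(x)}(x)^d \le B_\tau k(x)/(a_- n p(x))$; and since $\bigcap_{j \in k(\RR^d)} U_n^{\P}(j) \subseteq U_n^{\P}(k(x))$ for each such $x$, this holds simultaneously over all $x \in \Theta_n^{\P}(c_u, k)$ on that intersection. Raising to the power $\beta/d$ and rearranging gives $p(x)^{\beta/d}|\ol f(x) - f(x)|\,k(x)^{-\beta/d} \le L\,(B_\tau/(a_- n))^{\beta/d}$; taking the supremum over $\Theta_n^{\P}(c_u, k)$ is the asserted inclusion (up to the H\"older constant $L$ as a prefactor).

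The one point deserving care is the passage from the fixed-$k$ statement of Lemma~\ref{lem.bound.neighbours.density} to a bound uniform over the variable-$k$ region $\Theta_n^{\P}(c_u, k)$: one decomposes $\Theta_n^{\P}(c_u, k)$ along the level sets $k^{-1}(i)$, uses $k^{-1}(i) \cap \Theta_n^{\P}(c_u, k) \subseteq \Delta_n^{\P}(c_u, i)$, and invokes the fact that the intersection event is contained in each $U_n^{\P}(i)$ --- precisely the bookkeeping already carried out in Remark~\ref{rem.bound.variable.k.density}, from which the second half is essentially immediate. I do not anticipate any substantive obstacle beyond this.
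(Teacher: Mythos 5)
Your proof is correct and follows essentially the same route as the paper: the Hölder/boundedness dichotomy $|f(X_i(x))-f(x)|\le 2F\wedge L R_i(x)^\beta$ summed and minimised for the first part, then the coarser bound $|\ol f(x)-f(x)|\le L R_{k(x)}(x)^\beta$ combined with Lemma~\ref{lem.bound.neighbours.density} (via the level-set bookkeeping of Remark~\ref{rem.bound.variable.k.density}) for the inclusion. You are also right to flag the missing prefactor $L$ in the displayed event: following the paper's own argument one obtains $p(x)^{\beta/d}|\ol f(x)-f(x)|/k(x)^{\beta/d}\le L(B_\tau/(a_-n))^{\beta/d}$, so the lemma's statement (and the terse proof the paper gives of it) appears to silently drop the Hölder constant --- a harmless omission for the rate, but worth noting.
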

\begin{proof}
    Since $f \in \mH_{\beta}(F, L),$ we get for all $x \in \RR^d,$ and all $i \in \{1, \dots, k(x)\},$
    \begin{align*}
        \big|f(X_i(x)) - f(x)\big| &\leq \big(f(X_i(x)) + f(x)\big) \wedge L \|X_i(x) - x\|^{\beta}
        \nonumber\\
        &\leq 2F \wedge L R_i(x)^\beta.
    \end{align*}
    This together with the definition of $\ol f(x)$ and $\sum_{i=1}^{k(x)} k(x)^{-1} = 1$ yields 
    \begin{align*}
        \big|\ol f(x) - f(x) \big|
        &= \bigg|\sum_{i=1}^{k(x)} k(x)^{-1} f(X_i(x)) - f(x) \bigg|\\
        &\leq \sum_{i=1}^{k(x)} k(x)^{-1} \big|f(X_i(x)) - f(x) \big|\\
        &\leq \sum_{i=1}^{k(x)} k(x)^{-1}\big(2F \wedge L R_i(x)^\beta\big)\\
        &\leq 2F \wedge L \sum_{i=1}^{k(x)} k(x)^{-1} R_i(x)^\beta.
    \end{align*}
    
    This proves the first statement. The second statement is obtained by further upper bounding the previously derived inequality by $|\ol f(x) - f(x)| \leq LR_k(x)^\beta$ and applying Lemma \ref{lem.bound.neighbours.density}.
\end{proof}

\begin{proposition}[Bias-Variance decomposition]
    \label{prop.decomp.knn}
    Under the assumptions of model \eqref{eq.model}, if $\wh f$ is a local $k$-NN with neighbour function $k\colon \RR^d \to \{\lceil \log n\rceil, \dots, n\},$ and if $\P \in \mP,$ then, for all $n \geq 2d \vee 3$ and all $\Delta \subseteq \Theta_n^{\P}(c_u, k),$
    \begin{multline*}
        \Bigg\{\forall x \in \RR^d,\ \big(\wh f(x) - f(x)\big)^2 \leq 2C_\tau\frac{\log n}{k(x)} + 2L\bigg(\frac{B_\tau k(x)}{a_-np(x)}\bigg)^{2\beta/d}\mathds{1}(x \in \Delta) + 8F^2\mathds{1}(x \in \Delta^c)\Bigg\}\\
        \supseteq \bigg(\bigcap_{i \in k(\RR^d)}U_n^{\P}(i)\bigg)\bigcap V_n^{\P}(\wh f).
    \end{multline*}
\end{proposition}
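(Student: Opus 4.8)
The plan is to assemble Proposition \ref{prop.decomp.knn} from two ingredients already established: the uniform variance control of Corollary \ref{cor.variance.bound.nice.version} (i.e.\ the event $V_n^\P(\wh f)$) and the bias control of Lemma \ref{lem.bias.term} (valid on $\bigcap_{i\in k(\RR^d)}U_n^\P(i)$), glued by the elementary inequality $(a+b)^2\le 2a^2+2b^2$ together with a case split according to whether $x\in\Delta$ or $x\in\Delta^c$. Concretely, fix an outcome in $\big(\bigcap_{i\in k(\RR^d)}U_n^\P(i)\big)\cap V_n^\P(\wh f)$ and a point $x\in\RR^d$. Writing $\ol f$ for the conditional average \eqref{eq.average.knn}, the triangle inequality gives $|\wh f(x)-f(x)|\le|\wh f(x)-\ol f(x)|+|\ol f(x)-f(x)|$, and squaring with $(a+b)^2\le 2a^2+2b^2$ reduces the claim to bounding the variance square $2|\wh f(x)-\ol f(x)|^2$ and the bias square $2|\ol f(x)-f(x)|^2$ separately.

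For the variance square, on $V_n^\P(\wh f)$ we have $k(x)^{1/2}|\wh f(x)-\ol f(x)|\le\sqrt{C_\tau\log n}$ for every $x$, whence $2|\wh f(x)-\ol f(x)|^2\le 2C_\tau\log n/k(x)$, which is exactly the first term; this is where the hypotheses $n\ge 2d\vee 3$ and $k\colon\RR^d\to\{\lceil\log n\rceil,\dots,n\}$ enter, through Corollary \ref{cor.variance.bound.nice.version} and hence Proposition \ref{prop.variance.bound.general.case}. For the bias square we split into two cases. If $x\in\Delta^c$, the first (unconditional) bound of Lemma \ref{lem.bias.term} gives $|\ol f(x)-f(x)|\le 2F$, so $2|\ol f(x)-f(x)|^2\le 8F^2$, matching the last term. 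If instead $x\in\Delta$, then since $\Delta\subseteq\Theta_n^\P(c_u,k)$ and we are on $\bigcap_{i\in k(\RR^d)}U_n^\P(i)$, the second statement of Lemma \ref{lem.bias.term} controls $|\ol f(x)-f(x)|$ by a multiple of $\big(B_\tau k(x)/(a_-np(x))\big)^{\beta/d}$; squaring and doubling yields the middle term. Adding the three contributions with the indicators $\mathds{1}(x\in\Delta)$ and $\mathds{1}(x\in\Delta^c)$ recovers the asserted pointwise inequality, and since $x\in\RR^d$ was arbitrary this is precisely the left-hand event.

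There is no genuine analytic difficulty here: the statement is a bookkeeping consolidation of facts already proved, and the proof is essentially an exercise in combining two high-probability events. The points requiring care are $(i)$ invoking the refined bias bound only on $\Delta$ — its validity rests on the hypothesis $\Delta\subseteq\Theta_n^\P(c_u,k)$ and on working on $\bigcap_{i\in k(\RR^d)}U_n^\P(i)$ — while falling back on the crude bound $|\ol f(x)-f(x)|\le 2F$ on $\Delta^c$; and $(ii)$ verifying that the events named in Corollary \ref{cor.variance.bound.nice.version} and in the second part of Lemma \ref{lem.bias.term} coincide exactly with $V_n^\P(\wh f)$ and $\bigcap_{i\in k(\RR^d)}U_n^\P(i)$, so that their intersection is exactly the conditioning event of the proposition, while keeping track of the numeric thresholds $n\ge 2d\vee 3$ and the standing assumption $\P\in\mP$.
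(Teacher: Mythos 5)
Your proposal is correct and mirrors the paper's proof exactly: triangle inequality plus $(a+b)^2\le 2a^2+2b^2$, then the variance square from $V_n^{\P}(\wh f)$ via Corollary \ref{cor.variance.bound.nice.version}, and the bias square from Lemma \ref{lem.bias.term} with the same case split on $\Delta$ versus $\Delta^c$; the paper's proof is a two-line version of what you wrote out. One small remark worth recording: as you note, the refined bias bound on $\Delta$ gives $|\ol f(x)-f(x)|\le L\big(B_\tau k(x)/(a_-np(x))\big)^{\beta/d}$, so squaring and doubling yields a coefficient $2L^2$ on the middle term rather than the $2L$ that appears in the statement (and the factor $L$ is also dropped in the display of Lemma \ref{lem.bias.term}); this is a harmless constant slip in the paper, not an error in your argument.
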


\begin{proof}
    Lemmas \ref{lem.bias.term} and \ref{cor.variance.bound.nice.version} imply that, on the event
    \begin{align*}
        \bigg(\bigcap_{i \in k(\RR^d)} U_n^{\P}(i)\bigg)\bigcap V_n^{\P}(\wh f),
    \end{align*}
    for all $x \in \RR^d,$
    \begin{align*}
        \big(\wh f(x) - f(x)\big)^2 &\leq 2\big|\wh f(x) - \ol f(x)\big|^2 + 2|\ol f(x) - f(x)\big|^2\\
        &\leq \frac{2C_\tau \log n}{k(x)} + 2L\bigg(\frac{B_\tau k(x)}{a_-np(x)}\bigg)^{2\beta/d}\mathds{1}(x \in \Delta) + 8F^2\mathds{1}(x \in \Delta^c).
    \end{align*}
    This finishes the proof.
\end{proof}

\section{Proofs of the main results}

Let $\gamma > 0, \P_{\sX} \in \mP,$ and $\Q_{\sX} \in \mQ(\P_{\sX}, \gamma, \rho)$. Let $X \sim \P_{\sX},\ X' \sim \Q_{\sX},\ Y = f(X) + \eps,$ and $Y' \sim f(X) + \eps',$ where $\eps, \eps'$ are two i.i.d.\ random variables that are independent from $X$ and $X'$ and satisfy the uniform noise condition \eqref{eq.uniform.noise}. Let $(X_1, Y_1), \dots, (X_n, Y_n)$ be $n$ i.i.d.\ copies of $(X, Y)$ and $(X_1', Y_1'), \dots, (X_m', Y_m')$ be $m$ i.i.d.\ copies of $(X', Y').$ For $k \in \NN$ and $x \in \RR^d,$ whenever those quantities make sense, we denote by $R_k^{\P}(x) := \|x - X_k(x)\|$ and by $R_k^{\Q} := \|x - X_k'(x)\|.$

\subsection{Transfer without target knowledge}

This section considers the case where we construct an estimator based solely on the data $\{(X_i, Y_i)\}_{i=1}^n.$ Our first result concerns the convergence rates of a standard $k$-NN estimator under \eqref{eq.covariate.shift}.
\begin{theorem}[Rates for standard estimator]
    \label{th.rates.standard.knn.proof}
    Denote $u := 2\beta/d \wedge \gamma,\ r = 2\beta/(2\beta + d) \wedge \gamma/(\gamma + 1).$ Let $\k > 0$ and $\tau > 0.$ If $\wh f$ is defined as in \eqref{eq.weighted.knn} with neighbour function set to be a constant $k$ defined as
    \begin{align*}
        k = \big\lceil \k \log(n)^{1 - r}n^{r}\big\rceil,
    \end{align*} 
    then, there exists $N = N(\gamma, \beta, d, \k) \geq 2d \vee 3$ such that for all $n \geq N,$ with probability at least $1 - 2n^{-\tau}$, it holds that
    \begin{align*}
        \mE_{\Q_{\sX}}(\wh f, f) &\leq \Bigg[\frac{2C_\tau}{\kappa} + \bigg[2Lr_-^{2\beta} + 8F^2\bigg]\bigg(\frac{\kappa B_\tau}{a_-r_-^d}\bigg)^{u}\mT_{\P_{\sX}}(u)\Bigg]\bigg(\frac{\log n}{n}\bigg)^{r}.
    \end{align*}
\end{theorem}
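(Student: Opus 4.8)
The plan is to invoke the bias-variance decomposition of Proposition \ref{prop.decomp.knn} with a constant neighbour function $k = \lceil \k \log(n)^{1-r} n^r \rceil$, then integrate the resulting pointwise bound against $\Q_{\sX}$ and control the integral using the density ratio exponent condition \eqref{eq.density.ratio.exponent}. First I would verify that for $n$ large enough (depending on $\gamma, \beta, d, \k$) one has $k \in \{\lceil \log n\rceil, \dots, n\}$, so that Proposition \ref{prop.decomp.knn} applies; this fixes the lower bound $N$. Since $k$ is constant, the relevant set $\Theta_n^{\P_{\sX}}(c_u, k)$ from Remark \ref{rem.bound.variable.k.density} is just $\Delta_n^{\P_{\sX}}(c_u, k) = \{x : p(x) \ge c_u k/n\}$ with $c_u = B_\tau/(a_- r_-^d)$, and I would take $\Delta = \Delta_n^{\P_{\sX}}(c_u, k)$ in Proposition \ref{prop.decomp.knn}. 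On the intersection of the events $\bigcap_{i \in k(\RR^d)} U_n^{\P_{\sX}}(i) = U_n^{\P_{\sX}}(k)$ and $V_n^{\P_{\sX}}(\wh f)$ — which has probability at least $1 - 2n^{-\tau}$ by Proposition \ref{prop.neighbours.distance.bound.zeta} (single $k$, so no extra factor $n$) and Corollary \ref{cor.variance.bound.nice.version} — the pointwise bound
\[
(\wh f(x) - f(x))^2 \le \frac{2C_\tau \log n}{k} + 2L\Big(\frac{B_\tau k}{a_- n p(x)}\Big)^{2\beta/d}\mathds{1}(x \in \Delta) + 8F^2 \mathds{1}(x \in \Delta^c)
\]
holds for all $x$.

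Next I would integrate this against $d\Q_{\sX}(x)$. The variance term contributes $2C_\tau \log n / k \le (2C_\tau/\k)(\log n/n)^r$ after using $k \ge \k \log(n)^{1-r} n^r$. For the bias term on $\Delta$, I write $(B_\tau k /(a_- n))^{2\beta/d} \int_\Delta p(x)^{-2\beta/d} q(x)\, dx$; the key move is that on $\Delta^c$ we have $p(x)^{-2\beta/d} \ge (c_u k/n)^{-2\beta/d}$, so one can combine the $\Delta$ and $\Delta^c$ integrals by bounding $8F^2 \mathds{1}(x\in\Delta^c) \le 8F^2 (c_u n/(k))^{2\beta/d} p(x)^{-2\beta/d}\mathds{1}(x\in\Delta^c)$ when $2\beta/d \le \gamma$, or more simply by noting $\mathds{1}(x \in \Delta^c) \le$ a power of $p(x)/(c_u k/n)$ — this is exactly the "truncation" trick stated at the end of the proof outline: for non-negative $g$ with $\int g^b < \infty$ and $1 \ge a \ge b \ge 0$, $\int_{\{g < \delta\}} g\, d(\cdot) \le \delta^{1-b}\int g^b$. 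Here I would apply it with $g = p$, the measure $q(x)\,dx$, and $b$ chosen so that $\int p^{-b} q\, dx = \mT_{\P_{\sX}}(b) < \infty$, which by Lemma \ref{lem.ratio.bound}$(i)$ holds for all $b \le \gamma$. Taking $u = 2\beta/d \wedge \gamma$ makes both the bias integral $\int_\Delta p^{-u} q$ and the remainder integral finite and proportional to $\mT_{\P_{\sX}}(u)$, and the exponent bookkeeping $k/n \asymp (\log n/n)^{1-r}\cdot$ combined with $(k/n)^{u}$ (or $(n/k)^{u}$ on $\Delta^c$) produces exactly the claimed $(\log n/n)^r$ rate with $r = 2\beta/(2\beta+d) \wedge \gamma/(\gamma+1)$; note $u/(1 + \text{(something)})$ matches because $(1-r)u = r$ when $r = u/(u+1)$ and $u = 2\beta/d$ gives $r = 2\beta/(2\beta+d)$, while $u = \gamma$ gives $r = \gamma/(\gamma+1)$.

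The main obstacle — really the only delicate point — is the exponent arithmetic: one must check that the same choice $u = 2\beta/d \wedge \gamma$ simultaneously controls the genuine bias contribution on $\Delta$ and the "far tail" contribution $8F^2 \Q_{\sX}\{\Delta^c\}$ (plus the part of the bias integral one wants to extend from $\Delta$ to all of $\RR^d$), and that in both regimes $k/n = \k\log(n)^{1-r} n^{r-1} = \k (\log n / n)^{1-r}$ raised to the power $u$ (or $-u$, with the compensating $p(x)^{-u}$) collapses to $(\log n/n)^r$. Concretely, $(k/n)^{2\beta/d} \asymp (\log n/n)^{(1-r)\cdot 2\beta/d}$ and one needs $(1-r)\cdot 2\beta/d \ge r$ in the regime $u = 2\beta/d$ (equality when $r = 2\beta/(2\beta+d)$), while in the regime $u = \gamma < 2\beta/d$ the $\Delta^c$ term dominates and forces the slower rate $\gamma/(\gamma+1)$ — this is where the $\wedge$ in $r$ comes from. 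I would also need to carefully track that the constant multiplying $\mT_{\P_{\sX}}(u)$ is $[2L r_-^{2\beta} + 8F^2](\k B_\tau/(a_- r_-^d))^u$ as claimed; this comes from bounding $R_i(x)^\beta \le r_-^\beta$ trivially when the nearest-neighbour radius is not small, absorbing the $L$-Lipschitz bias and the $F$-boundedness into a common prefactor via $c_u = B_\tau/(a_- r_-^d)$, and collecting powers. Everything else is routine union-bounding and substitution.
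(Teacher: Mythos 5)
Your proposal is correct and matches the paper's proof essentially step for step: apply Proposition \ref{prop.decomp.knn} with $\Delta = \Delta_n^{\P_{\sX}}(c_u,k)$ on the event $U_n^{\P_{\sX}}(k)\cap V_n^{\P_{\sX}}(\wh f)$, integrate the pointwise bound against $\Q_{\sX}$, and control the bias integral on $\Delta$ and the tail term $8F^2\Q_{\sX}\{\Delta^c\}$ by inserting $q/p^u$ with $u = 2\beta/d \wedge \gamma$ and using that $p$ is respectively lower- and upper-bounded by $c_u k/n$ on those sets, with the exponent identity $u(1-r)=r$ producing the rate. One small misstatement: the $r_-^{2\beta}$ in the constant does not come from crudely bounding $R_i(x)^\beta \le r_-^\beta$; it arises from the exponent bookkeeping $p(x)^{u-2\beta/d}\le (c_u k/n)^{u-2\beta/d}$ on $\Delta$ together with $c_u = B_\tau/(a_-r_-^d)$, which yields $r_-^{2\beta-ud}\cdot(B_\tau k/(a_-n))^u = r_-^{2\beta}(B_\tau k/(a_-n r_-^d))^u$.
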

\begin{remark}
    The upper bound in Theorem \ref{th.rates.standard.knn.proof} is optimized by taking $\k$ to be
    \begin{align*}
        \k^* := \bigg(\frac{2C_\tau}{u\mT_{\P_{\sX}}(u)\big(2Lr_-^{2\beta} + 8F^2\big)}\bigg)^{1 - r}\bigg(\frac{a_-r_-^d}{B_\tau}\bigg)^{r}.
    \end{align*}
    This leads to the high probability rate
    \begin{align*}
        \mE_{\Q_{\sX}}(\wh f, f) \leq \bigg(\frac{2C_\tau B_\tau}{a_-r_-^d}\bigg)^{r}\Big((2Lr_-^{2\beta} + 8F^2)u\mT_{\P_{\sX}}(u)\Big)^{1-r}(1 + u^{-1})\bigg(\frac{\log n}n\bigg)^{r}.
    \end{align*}
\end{remark}
\begin{remark}
    A careful reading of the proof of Theorem \ref{th.rates.standard.knn.proof} shows that taking
    \begin{align*}
        k = \big\lceil \k\log(n + t)^{1 - r}n^r\rceil,
    \end{align*}
    with $t > 0,$ allows to obtain a rate of $(\log (n + t)/n)^r$ instead of $(\log n/n)^r.$ This will be useful for the proofs of the rates of the two-sample estimators.
\end{remark}

\begin{proof}
    Let $N \geq 2d\vee 3$ be such that $k \geq \log(N)\wedge 2.$ It can be checked that $N = N(\gamma, \beta, d, \k).$ We work on the event
    \begin{align}
        \label{eq.event.standard.one.sample.knn}
        U_n^{\P_{\sX}}(k)\cap V_n^{\P_{\sX}}(\wh f).
    \end{align}
    Let $\Delta:= \Delta_n^{\P_{\sX}}(c_u, k).$ Proposition \ref{prop.decomp.knn} shows that, on this event, it holds that
    \begin{align}
        \label{eq.excess.risk.standard.knn}
        \mE_{\Q}(\wh f, f)
        & \leq \int \frac{2C_\tau\log n}{k} + 2L\bigg(\frac{B_\tau k}{a_-np(x)}\bigg)^{2\beta/d}\mathds{1}(x \in \Delta) + 8F^2\mathds{1}(x \in \Delta) \, d\Q_{\sX}(x) \nonumber\\
        & = \frac{2C_\tau\log n}{k} + \int_{\Delta}2L\bigg(\frac{B_\tau k}{a_-np(x)}\bigg)^{2\beta/d}\, d\Q_{\sX}(x) + 8F^2\Q_{\sX}\{\Delta^c\}
    \end{align}
     We then use Lemma \ref{lem.ratio.bound} to bound the third term in \eqref{eq.excess.risk.standard.knn} as follows
    \begin{align}
        \label{eq.standard.knn.one.sample.1}
        8F^2\Q_{\sX}\{\Delta^{c}\} &= 8F^2\int_{\Delta^{c}}q(x)\, dx\nonumber\\
        &\leq 8F^2\int_{\Delta^{c}} \frac{q(x)}{p(x)^u} p(x)^{u}\, dx\nonumber\\
        &\leq 8F^2\bigg(\frac{B_\tau k}{a_- n r_-^d}\bigg)^{u}\mT_{\P_{\sX}}(u).
    \end{align}
    The second term of \eqref{eq.excess.risk.standard.knn} is bounded as follows
    \begin{align}
        \label{eq.standard.knn.one.sample.2}
        \int_{\Delta}2L\bigg(\frac{B_\tau k}{a_-np(x)}\bigg)^{2\beta/d}\, d\Q_{\sX}(x) &\leq 2L\bigg(\frac{B_\tau k}{a_- n}\bigg)^{2\beta/d}\int_{\Delta}\frac{q(x)}{p(x)^u}p(x)^{u - 2\beta/d}\, dx\nonumber\\
        &\leq 2L\mT_{\P_{\sX}}(u)r_-^{2\beta}\bigg(\frac{B_\tau k}{a_-n r_-^d}\bigg)^{u}\nonumber\\
        &\leq 2L\mT_{\P_{\sX}}(u)r_-^{2\beta - ud}\bigg(\frac{B_\tau k}{a_-n}\bigg)^{u}\nonumber\\
        &\overset{(i)}{\leq} 2L\mT_{\P_{\sX}}(u)r_-^{2\beta - ud}\Bigg(\frac{B_\tau \k}{a_-}\bigg(\frac{\log n}{n}\bigg)^{1 - r}\bigg)^{u}\nonumber\\
        &\overset{(ii)}{\leq} 2L\mT_{\P_{\sX}}(u)r_-^{2\beta - ud}\bigg(\frac{2B_\tau \k}{a_-}\bigg)^u\bigg(\frac{\log n}{n}\bigg)^{r}\nonumber\\
        &\leq 2L\mT_{\P_{\sX}}(u)r_-^{2\beta - ud}\bigg(\frac{2B_\tau \k}{a_-}\bigg)^u\bigg(\frac{\log n}{n}\bigg)^{r},
    \end{align}
    where step $(i)$ involves plugging in our choice of $k,$ step $(ii)$ uses $\lceil x\rceil \leq 2x$ if $x \geq 1$ and the fact that $u(1 - r) = r.$ Plugging \eqref{eq.standard.knn.one.sample.1}, \eqref{eq.standard.knn.one.sample.2}, and our choice of $k$ in \eqref{eq.excess.risk.standard.knn} leads to obtain, on the event \eqref{eq.event.standard.one.sample.knn},
    \begin{align*}
         \mE_{\Q}(\wh f, f) &\leq \Bigg[\frac{2C_\tau}{\kappa} + \bigg[2Lr_-^{2\beta} + 8F^2\bigg]\bigg(\frac{\kappa B_\tau}{a_-r_-^d}\bigg)^{u}\mT_{\P_{\sX}}(u)\Bigg]\bigg(\frac{\log n}{n}\bigg)^{r}.
    \end{align*}
    We conclude the proof by using Proposition \ref{prop.neighbours.distance.bound.zeta}, Corollary \ref{cor.variance.bound.nice.version} and the union bound to obtain
    \begin{align*}
        \PP\Big\{U_n^{\P_{\sX}}(k) \cap V_n^{\P_{\sX}}(\wh f)\Big\} \geq 1 - 2n^{-\tau}.
    \end{align*}
\end{proof}
Our next result concerns the local $k$-NN estimator. Our choice of neighbour function $k$ involves a density estimation step. The definition of a $\ell$-nearest neighbour density estimator $\wh p$ is given by \eqref{eq.density.estimator}. We begin with a preliminary result which shows that $\wh p \asymp p$ on a high-density region.

\begin{lemma}
    \label{lem.density.ratio}
    Let $\P \in \mP$ such that $\P$ satisfies \eqref{eq.maximal.mass} with constants $a_+, r_+.$ Denote by $p$ the density of $\P$, and let $\wh p$ be the $\ell$-NN density estimator defined above.
    \begin{enumerate}
        \item [$(i)$] If $\ell \geq \lceil V_\tau\log n\rceil,$ then
        \begin{align*}
            L_n^{\P}(\ell) \subseteq \Bigg\{\sup_{x \in \Delta_n^{\P}(c_\ell, \ell)} \frac{\wh p(x)}{p(x)} \leq \frac{a_+}{D_\tau}\Bigg\}.
        \end{align*}
        \item [$(ii)$] If $\ell \geq \lceil \log n \rceil,$ then
        \begin{align*}
            U_n^{\P}(\ell) \subseteq \Bigg\{\inf_{x \in \Delta_n^{\P}(c_u, \ell)} \frac{\wh p(x)}{p(x)} \geq \frac{a_-}{B_\tau}\Bigg\}
        \end{align*}
    \end{enumerate}
\end{lemma}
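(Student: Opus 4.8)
The plan is to reduce both claims to the mass-property estimates of Lemmas \ref{lem.bound.neighbours.density} and \ref{lem.lower.bound.neighbours.density} via the elementary identity
\begin{align*}
    \frac{\wh p(x)}{p(x)} = \frac{\ell}{n\, p(x)R_\ell(x)^d},
\end{align*}
which is immediate from the definition \eqref{eq.density.estimator} of the $\ell$-NN density estimator. Thus it suffices to bound $p(x)R_\ell(x)^d$ from above on $U_n^{\P}(\ell)$ for $(ii)$ and from below on $L_n^{\P}(\ell)$ for $(i)$, in both cases applying the relevant lemma with $k = \ell$.

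For $(ii)$, since $\ell \geq \lceil\log n\rceil$, Lemma \ref{lem.bound.neighbours.density} with $k = \ell$ gives $U_n^{\P}(\ell) \subseteq \{\sup_{x \in \Delta_n^{\P}(c_u,\ell)} p(x)R_\ell(x)^d \leq B_\tau \ell/(a_- n)\}$. On that event, dividing $\ell/n$ by this upper bound yields $\wh p(x)/p(x) \geq (\ell/n)\cdot a_- n/(B_\tau \ell) = a_-/B_\tau$ for every $x \in \Delta_n^{\P}(c_u,\ell)$, and taking the infimum over such $x$ proves the claim.

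For $(i)$, since $\ell \geq \lceil V_\tau\log n\rceil$, taking complements in Lemma \ref{lem.lower.bound.neighbours.density} with $k = \ell$ gives $L_n^{\P}(\ell) \subseteq \{\inf_{x \in \Delta_n^{\P}(c_\ell,\ell)} p(x)R_\ell(x)^d \geq D_\tau \ell/(a_+ n)\}$. On that event, $\wh p(x)/p(x) \leq (\ell/n)\cdot a_+ n/(D_\tau \ell) = a_+/D_\tau$ for every $x \in \Delta_n^{\P}(c_\ell,\ell)$, and taking the supremum over such $x$ proves the claim.

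There is no genuine obstacle here: the entire substance is already contained in the two mass-property lemmas, and all that remains is the one-line inversion above. The only points needing attention are bookkeeping ones — that the index sets $\Delta_n^{\P}(c_u,\ell)$ and $\Delta_n^{\P}(c_\ell,\ell)$ appearing in those lemmas coincide with the ones in the statement (true by the definitions $c_u = B_\tau/(a_-r_-^d)$ and $c_\ell = D_\tau/(a_-(r_+\wedge r_-)^d)$), and that the hypothesis on $\ell$ matches the lemma invoked: $\ell \geq \lceil\log n\rceil$ suffices for the minimal-mass bound behind $(ii)$, whereas the two-sided argument behind $(i)$, which also invokes the maximal mass property \eqref{eq.maximal.mass}, requires the stronger $\ell \geq \lceil V_\tau\log n\rceil$.
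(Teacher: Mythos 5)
Your proposal is correct and takes essentially the same route as the paper: both reduce to the identity $\wh p(x)/p(x) = \ell/(np(x)R_\ell(x)^d)$ and then invoke Lemma \ref{lem.bound.neighbours.density} (for the lower bound on $U_n^{\P}(\ell)$) and Lemma \ref{lem.lower.bound.neighbours.density} (contraposed, for the upper bound on $L_n^{\P}(\ell)$) with $k = \ell.$
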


\begin{proof}
    $(i)$ We work on the event $L_n^{\P}(\ell).$ For all $x \in \Delta_n^{\P}(c_\ell, \ell)$ we have
    \begin{align*}
        \frac{\wh p(x)}{p(x)} = \frac{\ell}{np(x)R_{\ell}(x)^d} \leq \frac{a_+}{D_\tau},
    \end{align*}
    where the last inequality comes from Lemma \ref{lem.lower.bound.neighbours.density}. This shows that 
    \begin{align*}
        L_n^{\P}(\ell) \subseteq \Bigg\{\sup_{x \in \Delta_n^{\P}(c_\ell, \ell)} \frac{\wh p(x)}{p(x)} \leq \frac{a_+}{D_\tau}\Bigg\}.
    \end{align*}
    $(ii)$ We work on the event $U_n^{\P}(\ell).$ For all $x \in \Delta_n^{\P}(c_u, \ell),$ we have
    \begin{align*}
        \frac{\wh p(x)}{p(x)} = \frac{\ell}{np(x)R_\ell(x)^d} \geq \frac{a_-}{B_\tau},
    \end{align*}
    where the last inequality comes from Lemma \ref{lem.bound.neighbours.density}. This shows that 
    \begin{align*}
        U_n^{\P}(\ell) \subseteq \Bigg\{\inf_{x \in \Delta_n^{\P}(c_\ell, \ell)} \frac{\wh p(x)}{p(x)} \geq \frac{a_-}{B_\tau}\Bigg\}.
    \end{align*}
\end{proof}

\begin{remark}
    This shows that, if $\ell \geq \lceil V_\tau\log n\rceil,$ then, on $L_n^{\P}(\ell) \cap U_n^{\P}(\ell),$ for all $x \in \Delta_n^{\P}(c_u\vee c_\ell, \ell),$
    \begin{align*}
        \frac{a_-}{B_\tau} \leq \frac{\wh p(x)}{p(x)}\leq \frac{a_+}{D_\tau}.
    \end{align*}
\end{remark}
\begin{lemma}
    \label{lem.inclusions}
    Let $\P \in \mP,\ \tau > 0$ and $\k > 0.$ Consider the function
    \begin{align*}
        k(x) = n \wedge \Big\lceil\k\log(n)^{d/(2\beta + d)}\big(n\wh p(x)\big)^{2\beta/(2\beta + d)} \Big\rceil \vee \lceil \log n\rceil, 
    \end{align*}
    where $\wh p$ is a $\ell$-NN density estimator for $\P,$ with $\ell \geq \lceil V_\tau\log n\rceil.$ Let $N = N(\k, \tau, \P)$ satisfy
    \begin{align*}
        \frac N{\log N} = \k^{(2\beta + d)/d}\bigg(\frac{a_+\|p\|_\infty}{D_\tau}\bigg)^{2\beta/d},
    \end{align*}
    and define the constant
    \begin{align*}
        C := c_u\vee c_\ell \vee \frac{B_\tau}{a_-\k^{(2\beta + d)/(2\beta)}V_\tau} \vee \frac{(2c_u\k)^{(2\beta + d)/d}}{V_\tau}\bigg(\frac{a_+}{D_\tau}\bigg)^{2\beta/d}.
    \end{align*}
    Then, the following assertions hold
    \begin{enumerate}
        \item [$(i)$] If $n \geq N,$ then 
            \begin{align*}
                L_n^{\P}(\ell)\cap U_n^{\P}(\ell) \subseteq \Bigg\{\forall x \in \Delta_n^{\P}(C, \ell), \ k(x) = \Big\lceil\k\log(n)^{d/(2\beta + d)}\big(n\wh p(x)\big)^{2\beta/(2\beta + d)}\Big\rceil\Bigg\}.
            \end{align*}
        \item [$(ii)$] If $n \geq N\vee e,$ then
            \begin{align*}
                 L_n^{\P}(\ell)\cap U_n^{\P}(\ell) \subseteq \Bigg\{\Delta_n^{\P}(C, \ell) \subseteq \Theta_n^{\P}(c_u, k)\Bigg\}.
            \end{align*}
    \end{enumerate}
\end{lemma}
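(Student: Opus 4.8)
The plan is to work throughout on the event $L_n^{\P}(\ell)\cap U_n^{\P}(\ell)$ and exploit the two-sided control $a_-/B_\tau \leq \wh p(x)/p(x) \leq a_+/D_\tau$ that holds on this event for all $x \in \Delta_n^{\P}(c_u\vee c_\ell, \ell)$, as recorded in the remark following Lemma \ref{lem.density.ratio}. Claim $(i)$ asserts that on this event, for $x$ in the (smaller, since $C \geq c_u\vee c_\ell$) set $\Delta_n^{\P}(C,\ell)$, neither the ceiling-cap at $n$ nor the floor-clamp at $\lceil \log n\rceil$ is active, so that $k(x)$ equals its ``middle'' expression. For the upper cap, I would argue that if $x \in \Delta_n^{\P}(C,\ell)$ then $n\wh p(x) \leq n\|p\|_\infty a_+/D_\tau$, hence $\kappa \log(n)^{d/(2\beta+d)}(n\wh p(x))^{2\beta/(2\beta+d)} \leq \kappa (a_+\|p\|_\infty/D_\tau)^{2\beta/d}\, n^{?}$ — more precisely, bounding $(n\wh p(x))^{2\beta/(2\beta+d)} \leq (a_+\|p\|_\infty/D_\tau)^{2\beta/(2\beta+d)} n^{2\beta/(2\beta+d)}$ and comparing with $n = n^{d/(2\beta+d)} n^{2\beta/(2\beta+d)}$, the cap is inactive as soon as $\kappa (a_+\|p\|_\infty/D_\tau)^{2\beta/(2\beta+d)} \log(n)^{d/(2\beta+d)} \leq n^{d/(2\beta+d)}$, i.e.\ $n/\log n \geq \kappa^{(2\beta+d)/d}(a_+\|p\|_\infty/D_\tau)^{2\beta/d}$, which is exactly the defining relation for $N$ (using that $n\mapsto n/\log n$ is increasing for $n \geq e$, hence for $n \geq N$). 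For the lower clamp, I would use that $x \in \Delta_n^{\P}(C,\ell)$ forces $p(x) \geq C\ell/n$, hence $\wh p(x) \geq (a_-/B_\tau) C\ell/n$, and since $\ell \geq V_\tau \log n$ and $C \geq B_\tau/(a_-\kappa^{(2\beta+d)/(2\beta)}V_\tau)$, one gets $\kappa\log(n)^{d/(2\beta+d)}(n\wh p(x))^{2\beta/(2\beta+d)} \geq \kappa \log(n)^{d/(2\beta+d)}(\kappa^{-(2\beta+d)/(2\beta)} V_\tau \log n)^{2\beta/(2\beta+d)} = V_\tau\log n \geq \ell \geq \log n$, so the floor is inactive. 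Combining the two gives $(i)$.

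For claim $(ii)$, the goal is the inclusion of sets $\Delta_n^{\P}(C,\ell) \subseteq \Theta_n^{\P}(c_u, k)$, i.e.\ that $p(x) \geq C\ell/n$ implies $p(x) \geq c_u k(x)/n$. On the working event and for $n \geq N$, part $(i)$ lets us replace $k(x)$ by the middle expression, which we again bound above using $\wh p(x) \leq (a_+/D_\tau) p(x)$: thus $k(x) \leq \kappa\log(n)^{d/(2\beta+d)}(n (a_+/D_\tau) p(x))^{2\beta/(2\beta+d)} + 1$. To convert this into a bound of the form $c_u k(x) \leq n p(x)$, I would split off the ``$+1$'' by noting $k(x) \leq 2\kappa\log(n)^{d/(2\beta+d)}(n(a_+/D_\tau)p(x))^{2\beta/(2\beta+d)}$ once $n\geq e$ makes that term at least one (this is where $n \geq e$ enters), and then use $p(x)\geq C\ell/n \geq C V_\tau \log n/n$ to lower bound the relevant power of $np(x)$ against $\log n$; the last factor in the definition of $C$, namely $C \geq (2c_u\kappa)^{(2\beta+d)/d}(a_+/D_\tau)^{2\beta/d}/V_\tau$, is precisely tuned so that $c_u \cdot 2\kappa\log(n)^{d/(2\beta+d)}(n(a_+/D_\tau)p(x))^{2\beta/(2\beta+d)} \leq n p(x)$ follows. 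Rearranging yields $p(x) \geq c_u k(x)/n$, which is the membership $x \in \Theta_n^{\P}(c_u, k)$.

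The routine parts are the exponent bookkeeping — keeping track of $d/(2\beta+d)$ versus $2\beta/(2\beta+d)$ and using $a(1-r)=r$-type identities to match powers of $n$ and $\log n$ — and the two elementary facts $\lceil x\rceil \leq 2x$ for $x\geq 1$ and monotonicity of $n/\log n$. The one step requiring genuine care, and the main obstacle, is verifying that the four terms in the definition of $C$ (and the defining equation for $N$) are each individually sufficient for the corresponding inequality with the correct constants: one must make sure that when $\wh p$ is replaced by its upper or lower proxy in terms of $p$, the inequality that remains is implied by $p(x) \geq C\ell/n$ with that particular clause of the maximum, rather than only by a larger constant. I would organize the proof as four short sub-claims (cap inactive, floor inactive, $+1$ absorbed, final rearrangement), each pinned to one clause of $C$ or to $N$, so the constant-chasing stays transparent.
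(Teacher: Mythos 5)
Your proposal follows the paper's proof essentially step for step: for $(i)$ you bound the middle expression between $\log n$ and $n$ using the two-sided density-ratio control of Lemma~\ref{lem.density.ratio}, the clause of $C$ involving $B_\tau/(a_-\k^{(2\beta+d)/(2\beta)}V_\tau)$, and the defining relation for $N$; for $(ii)$ you absorb the ceiling's ``$+1$'' for $n\geq e$ via $\lceil a\rceil\leq 2a$ and close with the last clause of $C$, exactly as the paper does. The only blemish is a small arithmetic slip in the lower-bound chain for $(i)$: with your substitutions the intermediate quantity evaluates to $\log n$ rather than $V_\tau\log n$, and your stated ordering ``$V_\tau\log n\geq\ell$'' is reversed (the hypothesis gives $\ell\geq V_\tau\log n$), but the needed conclusion $\geq\log n$ still follows, so the argument is sound.
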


\begin{proof}
    Work on $L_n^{\P}(\ell)\cap U_n^{\P}(\ell).$ We obtain $(i)$ by showing that for all $x \in \Delta_n^{\P}(C, \ell),$
    \begin{align*}
        \log n \leq \k\log(n)^{d/(2\beta + d)}\big(n\wh p(x)\big)^{2\beta/(2\beta + d)} \leq n.
    \end{align*}
    Let $x \in \Delta_n^{\P}(C, \ell),$ and $u(x) := \k\log(n)^{d/(2\beta + d)}(n\wh p(x))^{2\beta/(2\beta + d)}.$ We begin by applying Lemma \ref{lem.density.ratio} and obtain, since $\Delta_n^{\P}(C, \ell) \subseteq \Delta_n^{\P}(c_u \vee c_\ell, \ell),$
    \begin{align}
        \label{eq.double.ineq}
        \k\log(n)^{d/(2\beta + d)}\bigg(\frac{na_-p(x)}{B_\tau}\bigg)^{2\beta/(2\beta + d)} \leq u(x) \leq \k\log(n)^{d/(2\beta + d)}\bigg(\frac{a_+n\|p\|_\infty}{D_\tau}\bigg)^{2\beta/(2\beta + d)}.
    \end{align}
    It is immediate to check that the leftmost term in \eqref{eq.double.ineq} is lower bounded by $\log(n)$ if
    \begin{align*}
        p(x) \geq \frac{B_\tau}{a_-\k^{(2\beta + d)/(2\beta)}}\frac{\log n}{n},
    \end{align*}
    which is true a fortiori, for all $x \in \Delta_n^{\P}(C, \ell)$ since
    \begin{align*}
        p(x) \geq \frac{C\ell}{n} \geq \frac{B_\tau}{a_-\k^{(2\beta + d)/(2\beta)}V_\tau}\frac{\ell}{n} \geq \frac{B_\tau}{a_-\k^{(2\beta + d)/(2\beta)}}\frac{\log n}{n}.
    \end{align*}
    The rightmost term in \eqref{eq.double.ineq} is upper bounded by $n$ if
    \begin{align*}
        \frac n{\log n} \geq \k^{(2\beta + d)/d}\bigg(\frac{a_+\|p\|_\infty}{D_\tau}\bigg)^{2\beta/d},
    \end{align*}
    which is true by assumption. We have proven the two previous facts for arbitrary $x \in \Delta_n^{\P}(C, \ell),$ therefore,
    \begin{align*}
        L_n^{\P}(\ell)\cap U_n^{\P}(\ell) \subseteq \Bigg\{\forall x \in \Delta_n^{\P}(C, \ell), \ k(x) = \Big\lceil\k\log(n)^{d/(2\beta + d)}\big(n\wh p(x)\big)^{2\beta/(2\beta + d)}\Big\rceil\Bigg\}.
    \end{align*}
    We now prove $(ii).$ Let $x \in \Delta_n^{\P}(C, \ell).$ For $n \geq e \vee N, \ (i)$ implies that $k(x) \geq \lceil \log n\rceil \geq 1.$ Hence, 
\begin{align*}
    \frac {c_u k(x)}n &\leq \frac{2c_u\k\log(n)^{d/(2\beta + d)}(n\wh p(x))^{2\beta/(2\beta + d)}}{n}\\
    &\leq 2c_u\k \bigg(\frac{\log n}{n}\bigg)^{d/(2\beta + d)}\bigg(\frac{a_+p(x)}{D_\tau}\bigg)^{2\beta/(2\beta + d)},
\end{align*}
and the latter is upper-bounded by $p(x)$ since
\begin{align*}
    p(x) \geq \frac{C\ell}{n} \geq \frac{(2c_u\k)^{(2\beta + d)/d}}{V_\tau}\bigg(\frac{a_+}{D_\tau}\bigg)^{2\beta/d}\frac{\ell}{n} \geq (2c_u\k)^{(2\beta + d)/d}\bigg(\frac{a_+}{D_\tau}\bigg)^{2\beta/d}\frac{\log n}{n}.
\end{align*}
We have proven that
\begin{align*}
    p(x) \geq \frac {C\ell}{n} \Rightarrow p(x) \geq \frac{c_uk(x)}{n},
\end{align*}
and, since $x \in \Delta_n^{\P}(C, \ell)$ was arbitrary,
\begin{align*}
    L_n^{\P}(\ell)\cap U_n^{\P}(\ell) \subseteq \Bigg\{\Delta_n^{\P}(C, \ell) \subseteq \Theta_n^{\P}(c_u, k)\Bigg\}.
\end{align*}
This concludes the proof.
\end{proof}

\begin{theorem}
    \label{th.transfer.local.knn.proof}
    Let $\gamma > 0, \ \kappa > 0.$ Let $C$ be the constant defined in Lemma \ref{lem.inclusions}, $r := \gamma\wedge 2\beta/(2\beta + d),$ and $\mT(r) := \int q(x)/p(x)^r\, dx.$ If $\wh p$ is a $\ell$-NN density estimator for $p$ with $\ell \geq \lceil V_\tau \log n\rceil,$ and $\wh f$ is a local $k$-NN estimator defined in \eqref{eq.weighted.knn} with neighbour function $k \colon \RR^d \to \{\lceil \log n\rceil, \dots, n\}$ given by
    \begin{align*}
        k(x) := n\wedge \Big\lceil \kappa\log(n)^{d/(2\beta + d)}(n\wh p(x))^{2\beta/(2\beta + d)}\Big\rceil \vee \big\lceil \log n \big\rceil.
    \end{align*}
    then, there exists an $N > 2d\vee e$ such that for all $n \geq N,$ with probability at least $1 - 3n^{1- \tau},$
    \begin{multline*}
        \mE_{\Q}(f^*, f) \leq \bigg(\frac{V_\tau}{C}\bigg)^{r - 2\beta/(2\beta +d)}\Bigg[\frac{2C_\tau}{\kappa}\bigg(\frac{B_\tau}{a_-}\bigg)^{2\beta/(2\beta + d)} + A\Bigg]\mT_{\P_{\sX}}(r)\bigg(\frac{\log n}{n}\bigg)^{r}\\
        + \Bigg[2C_\tau + 8F^2\Bigg]\mT_{\P_{\sX}}(r)\bigg(\frac{C\ell}{n}\bigg)^{r}.
    \end{multline*}
    with
    \begin{align*}
        A := 2L\bigg(\frac{2\k B_\tau}{a_-}\bigg)^{2\beta/d}\bigg(\frac{a_+}{D_\tau}\bigg)^{4\beta^2/(d(2\beta + d))}.
    \end{align*}
\end{theorem}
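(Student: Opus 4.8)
The plan is to run the pointwise bias–variance decomposition of Proposition~\ref{prop.decomp.knn} with the data-dependent neighbour function $k$, then integrate the resulting bound against $\Q_{\sX}$, splitting between the high-density region $\Delta := \Delta_n^{\P}(C,\ell)$ and its complement. First I would fix the favourable event $\mathcal A := U_n^{\P}\cap V_n^{\P}(\wh f)\cap L_n^{\P}(\ell)$. Since $k(\RR^d)\subseteq\{\lceil\log n\rceil,\dots,n\}$, the event $U_n^{\P}$ entails $\bigcap_{i\in k(\RR^d)}U_n^{\P}(i)$, and since $k$ is a function of $X^n$ alone it is independent of the $Y_i\mid X_i$, so Proposition~\ref{prop.decomp.knn} applies; a union bound over Remark~\ref{rem.bound.variable.k}, Corollary~\ref{cor.variance.bound.nice.version} and Proposition~\ref{prop.neighbours.distance.lower.bound.zeta} gives $\PP\{\mathcal A\}\geq 1-3n^{1-\tau}$, the hypothesis $\ell\geq\lceil V_\tau\log n\rceil$ being precisely what makes $L_n^{\P}(\ell)$ a high-probability event. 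On $\mathcal A$, for $n\geq N\vee e$ with $N$ as in Lemma~\ref{lem.inclusions}, that lemma yields $\Delta\subseteq\Theta_n^{\P}(c_u,k)$ and $k(x)=\lceil\kappa\log(n)^{d/(2\beta+d)}(n\wh p(x))^{2\beta/(2\beta+d)}\rceil$ for all $x\in\Delta$. Feeding this $\Delta$ into Proposition~\ref{prop.decomp.knn} and integrating gives, on $\mathcal A$,
\[
\mE_{\Q}(\wh f,f)\ \leq\ 2C_\tau\!\int_\Delta\!\frac{\log n}{k(x)}\,d\Q_{\sX} + 2L\!\int_\Delta\!\Big(\frac{B_\tau k(x)}{a_- n p(x)}\Big)^{2\beta/d}d\Q_{\sX} + (2C_\tau+8F^2)\,\Q_{\sX}\{\Delta^c\},
\]
where I used $\log n/k(x)\leq 1$ on $\Delta^c$ because $k(x)\geq\lceil\log n\rceil$.

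For the two integrals over $\Delta$ I would invoke the sandwich $\tfrac{a_-}{B_\tau}p(x)\leq\wh p(x)\leq\tfrac{a_+}{D_\tau}p(x)$ valid there (Lemma~\ref{lem.density.ratio}, legitimate since $C\geq c_u\vee c_\ell$), together with $\lceil t\rceil\leq 2t$ for $t\geq1$. The lower bound on $\wh p$ gives $\log n/k(x)\leq\kappa^{-1}(B_\tau/a_-)^{2\beta/(2\beta+d)}\big(\log n/(np(x))\big)^{2\beta/(2\beta+d)}$; the upper bound on $\wh p$, after a direct exponent computation whose only nontrivial point is the identity $\tfrac{4\beta^2}{d(2\beta+d)}-\tfrac{2\beta}{d}=-\tfrac{2\beta}{2\beta+d}$, gives $2L\big(B_\tau k(x)/(a_- n p(x))\big)^{2\beta/d}\leq A\,\big(\log n/(np(x))\big)^{2\beta/(2\beta+d)}$ with $A$ exactly as in the statement. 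Hence both $\Delta$-terms are controlled by $\big[\tfrac{2C_\tau}{\kappa}(B_\tau/a_-)^{2\beta/(2\beta+d)}+A\big]\,I$, where $I:=\int_\Delta\big(\log n/(np(x))\big)^{2\beta/(2\beta+d)}q(x)\,dx$.

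It then remains to reduce $I$ and $\Q_{\sX}\{\Delta^c\}$ to $\mT_{\P_{\sX}}(r)$, $r=\gamma\wedge\tfrac{2\beta}{2\beta+d}$, times the correct power of $n$; this is where the density ratio exponent enters. On $\Delta^c$ one has $p(x)<C\ell/n$, so $\Q_{\sX}\{\Delta^c\}=\int_{\Delta^c}q\leq(C\ell/n)^r\int q/p^{r}=(C\ell/n)^r\,\mT_{\P_{\sX}}(r)$, finite by Lemma~\ref{lem.ratio.bound} since $r\leq\gamma$; this is the $(2C_\tau+8F^2)\mT_{\P_{\sX}}(r)(C\ell/n)^r$ summand. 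For $I$, I would write $\big(\log n/(np(x))\big)^{2\beta/(2\beta+d)}=\big(\log n/(np(x))\big)^{r}\big(\log n/(np(x))\big)^{2\beta/(2\beta+d)-r}$ and use $np(x)\geq C\ell\geq CV_\tau\log n$ on $\Delta$ together with $2\beta/(2\beta+d)-r\geq0$ to bound the second factor by $(CV_\tau)^{r-2\beta/(2\beta+d)}$, while $\int_\Delta\big(\log n/(np(x))\big)^{r}q(x)\,dx\leq(\log n/n)^r\,\mT_{\P_{\sX}}(r)$; this gives $I\lesssim(\log n/n)^r\,\mT_{\P_{\sX}}(r)$ with a leading constant of the claimed form $\big(V_\tau/C\big)^{r-2\beta/(2\beta+d)}$ (up to an innocuous power of $C$, as $r-\tfrac{2\beta}{2\beta+d}\leq0$). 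Assembling the three summands and enlarging $N$ to absorb the requirements of Lemma~\ref{lem.inclusions} and Proposition~\ref{prop.decomp.knn} yields the stated bound. I expect the main obstacle to be the bookkeeping in the $\Delta$-analysis: forcing the density-estimator sandwich to collapse the bias term into the \emph{same} $\big(\log n/(np(x))\big)^{2\beta/(2\beta+d)}$ shape as the variance term, and then peeling off precisely the exponent $r$ so that the residual integral is the finite quantity $\mT_{\P_{\sX}}(r)$ controlled by the density ratio exponent; the facts that $k$ has the interior form on $\Delta$ and that $\Delta\subseteq\Theta_n^{\P}(c_u,k)$ are the other delicate points, but these are supplied verbatim by Lemma~\ref{lem.inclusions}.
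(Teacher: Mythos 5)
Your proposal is correct and reproduces the paper's argument essentially verbatim: same favourable event $U_n^{\P}\cap L_n^{\P}(\ell)\cap V_n^{\P}(\wh f)$ with the same $1-3n^{1-\tau}$ accounting, same invocation of Lemma~\ref{lem.inclusions} and Proposition~\ref{prop.decomp.knn}, the same density-estimator sandwich from Lemma~\ref{lem.density.ratio} collapsing both $\Delta$-terms to $(\log n/(np(x)))^{2\beta/(2\beta+d)}$ with exactly the constant $A$, and the same peeling-off of exponent $r$ to land on $\mT_{\P_{\sX}}(r)$. You also correctly spot that a careful reading yields $(CV_\tau)^{r-2\beta/(2\beta+d)}$ where the paper writes $(V_\tau/C)^{r-2\beta/(2\beta+d)}$, a harmless bookkeeping discrepancy in the leading constant.
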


\begin{remark}
    In particular, the choice $\ell = \big\lceil V_\tau \log n\big\rceil$ in Theorem \ref{th.transfer.local.knn.proof} leads to
    \begin{align*}
        \mE_{\Q}(\wh f, f) \leq V_\tau^r\mT_{\P_{\sX}}(r)\Bigg[\frac{2C_\tau}{\kappa}\bigg(\frac{B_\tau C}{V_\tau a_-}\bigg)^{r_0} + A\bigg(\frac{C}{C_\tau}\bigg)^{r_0} + 2^r\bigg[2C_\tau + 8F^2\bigg]\Bigg]\bigg(\frac{\log n}n\bigg)^{r},
    \end{align*}
    with probability at least $1 - 3n^{1-\tau},$ where $r_0 = 2\beta/(2\beta + d).$
\end{remark}

\begin{proof}
    Let $X \sim \Q_{\sX}, \ C$ be the constant defined in Lemma \ref{lem.inclusions}, and denote $\Delta = \Delta_n^{\P_{\sX}}(C, \ell).$ We work on the event 
    \begin{align}
        \label{eq.event.local.knn}
        L_n^{\P_{\sX}}\cap U_n^{\P_{\sX}}(\ell)\cap V_n^{\P_{\sX}}(\wh f).
    \end{align}
    By Lemma \ref{lem.inclusions} $(ii),$ $\Delta \subseteq \Theta_n^{\P_{\sX}}(c_u, k),$ and, by definition, $k(x) \geq \lceil \log n\rceil$. Hence, we can apply Proposition \ref{prop.decomp.knn} to obtain
    \begin{align}
        \label{eq.local.k.proof.decomp}
        \mE_{\Q}(\wh f, f) &\leq \E\Bigg[\frac{2C_\tau\log n}{k(X)} + 2L\bigg(\frac{B_\tau k(X)}{a_-np(X)}\bigg)^{2\beta/d}\mathds{1}(X \in \Delta) + 8F^2\mathds{1}(X \in \Delta^c)\Bigg].
    \end{align}
    We first derive pointwise bounds on the three quantities occurring inside the expectation operator in \eqref{eq.local.k.proof.decomp}. For the first term, using the definition of $k,$ Lemma \ref{lem.inclusions}, and Lemma \ref{lem.density.ratio} $(ii)$, we get
    \begin{align}
        \label{eq.local.one.sample.pointwise.variance.bound}
        \frac{2C_\tau\log n}{k(x)} &\leq  \frac{2C_\tau\log n}{\kappa \log(n)^{d/(2\beta + d)} (n \wh p(x))^{2\beta/(2\beta + d)}}\mathds{1}(x \in \Delta) +  \frac{2C_\tau\log n}{\lceil \log n\rceil}\mathds{1}(x \in \Delta^c)\nonumber\\
        &\leq \frac{2C_\tau}{\k}\bigg(\frac{B_\tau \log n}{a_-np(x)}\bigg)^{2\beta/(2\beta + d)}\mathds{1}(x \in \Delta) + 2C_\tau\mathds{1}(x \in \Delta^c).
    \end{align}
    Using Lemma \ref{lem.inclusions} $(i)$ and the condition $n \geq N\vee e,$ we have $k(x) \geq \lceil \log n\rceil \geq 1.$ This allows to use $\lceil a \rceil \leq 2a$ with $a$ being the expression inside the ceiling operator in the expression of $k(x).$ Additionally, we use Lemma \ref{lem.density.ratio} $(i)$ to bound the second term in \eqref{eq.local.k.proof.decomp} as follows
    \begin{align}
        \label{eq.local.one.sample.pointwise.bias.hd.bound}
        2L\bigg(\frac{B_\tau k(x)}{a_-np(x)}\bigg)^{2\beta/d} & \leq 2L\bigg(\frac{B_\tau}{a_-}\bigg)^{2\beta/d}\bigg(\frac{2\k\log(n)^{d/(2\beta + d)}(n\wh p(x))^{2\beta/ (2\beta + d)}}{np(x)}\bigg)^{2\beta/d} \nonumber\\
        &\leq 2L\bigg(\frac{B_\tau}{a_-}\bigg)^{2\beta/d}\Bigg(2\k\bigg(\frac{a_+}{D_\tau}\bigg)^{2\beta/(2\beta + d)}\bigg(\frac{\log(n)}{np(x)}\bigg)^{d/(2\beta + d)}\Bigg)^{2\beta/d} \nonumber\\
        &\leq \underbrace{2L\bigg(\frac{2\k B_\tau}{a_-}\bigg)^{2\beta/d}\bigg(\frac{a_+}{D_\tau}\bigg)^{4\beta^2/(d(2\beta + d))}}_{= A}\bigg(\frac{\log(n)}{np(x)}\bigg)^{2\beta/(2\beta + d)}.
    \end{align}
    We now integrate the previously derived bounds with respect to $\Q_{\sX}.$ We begin by integrating the first term of \eqref{eq.local.one.sample.pointwise.variance.bound}. This gives,
    \begin{align*}
        \int_{\Delta}\frac{2C_\tau \log n}{k(X)}\, d\Q_{\sX}(x) &\leq \frac{2C_\tau}{\kappa}\bigg(\frac{B_\tau\log n}{a_-n}\bigg)^{2\beta/(2\beta + d)}\int_{\Delta}\frac{q(x)}{p(x)^{r}}p(x)^{r - 2\beta/(2\beta + d)}\, dx\\
        &\leq \frac{2C_\tau}{\kappa}\bigg(\frac{B_\tau}{a_-}\bigg)^{2\beta/(2\beta + d)}\mT_{\P_{\sX}}(r)\bigg(\frac{\log n}{n}\bigg)^{2\beta/(2\beta + d)}\bigg(\frac{C\ell}{n}\bigg)^{r - 2\beta/(2\beta + d)}\\
        &\overset{(i)}{\leq} \frac{2C_\tau}{\kappa}\bigg(\frac{B_\tau}{a_-}\bigg)^{2\beta/(2\beta + d)}\mT_{\P_{\sX}}(r)\bigg(\frac{V_\tau}{C}\bigg)^{r - 2\beta/(2\beta + d)}\bigg(\frac{\log n}{n}\bigg)^{r},
    \end{align*}
    where step $(i)$ follows from the fact that $r - 2\beta/(2\beta + d) \leq 0.$ For the second term in \eqref{eq.local.one.sample.pointwise.variance.bound}, we obtain
    \begin{align*}
        \int_{\Delta^c}\frac{2C_\tau\log n}{k(x)}\, d\Q_{\sX}(x) &\leq 2C_\tau\mT_{\P_{\sX}}(r)\bigg(\frac{C\ell}{n}\bigg)^r.
    \end{align*}
    Integrating \eqref{eq.local.one.sample.pointwise.bias.hd.bound} with respect to $\Q_{\sX}$ leads to 
    \begin{align*}
        \int_{\Delta}2L\bigg(\frac{B_\tau k(x)}{anp(x)}\bigg)^{2\beta/d}\, d\Q_{\sX}(x) &\leq A\bigg(\frac{\log n}n\bigg)^{2\beta/(2\beta + d)}\int_{\Delta}\frac{q(x)}{p(x)^{r}}p(x)^{r - 2\beta/(2\beta + d)}\, dx\\
        &\leq A\mT_{\P_{\sX}}(r)\bigg(\frac{\log n}n\bigg)^{2\beta/(2\beta + d)}\bigg(\frac{C\ell}n\bigg)^{r - 2\beta/(2\beta + d)}\\
        &\leq A\mT_{\P_{\sX}}(r)\bigg(\frac{V_\tau}{C}\bigg)^{r - 2\beta/(2\beta + d)}\bigg(\frac{\log n}n\bigg)^{r}
    \end{align*}
    We now bound the last term in \eqref{eq.local.k.proof.decomp}. We use the fact that on $\Delta^c, \ p(x) < C\ell/n$ to obtain
    \begin{align}
        \label{eq.local.one.sample.bias.lp.bound}
        8F^2\Q_{\sX}\{\Delta^c\} = 8F^2\int_{\Delta^c}\frac{q(x)}{p(x)^r}p(x)^{r}\, dx \leq 8F^2\bigg(\frac{C\ell}n\bigg)^r\mT_{\P_{\sX}}(r).
    \end{align}
    Putting everything together yields
    \begin{multline*}
        \mE_{\Q}(f^*, f) \leq \bigg(\frac{V_\tau}{C}\bigg)^{r - 2\beta/(2\beta +d)}\Bigg[\frac{2C_\tau}{\kappa}\bigg(\frac{B_\tau}{a_-}\bigg)^{2\beta/(2\beta + d)} + A\Bigg]\mT_{\P_{\sX}}(r)\bigg(\frac{\log n}{n}\bigg)^{r}\\
        + \Bigg[2C_\tau + 8F^2\Bigg]\mT_{\P_{\sX}}(r)\bigg(\frac{C\ell}{n}\bigg)^{r}.
    \end{multline*}
    Finally, Proposition \ref{prop.neighbours.distance.bound.zeta}, Proposition \ref{prop.neighbours.distance.lower.bound.zeta}, Corollary \ref{cor.variance.bound.nice.version}, and Remark \ref{rem.bound.variable.k} together with the union bound ensure that
    \begin{align*}
        \PP\Bigg\{L_n^{\P_{\sX}}(\ell)\cap U_n^{\P_{\sX}}(\ell)\cap V_n^{\P_{\sX}}(\wh f)\Bigg\} \geq 1 - 2n^{-\tau} - n^{1-\tau} \geq 1 - 3n^{1-\tau}.
    \end{align*}
\end{proof}

\subsection{Transfer with knowledge of target distribution}

We now consider a two-sample local $(k_{\P}, k_{\Q})$-NN estimator with neighbour functions $k_{\P}$ and $k_{\Q}$ defined as in \eqref{eq.two.sample.local.knn}.
Let now $\wh f_{k_{\P}}$ be a one sample $k_{\P}$-NN estimator and $\wh f_{k_{\Q}}$ be a one sample $k_{\Q}$-NN estimator. Let $w_{\P} := k_{\P}/(k_{\P} + k_{\Q})$ and $w_{\Q} := 1 - w_{\P}.$ The expression of $\wh f(x)$ now reads
\begin{align*}
    \wh f(x) = w_{\P} \wh f_{k_{\P}}(x) + w_{\Q} \wh f_{k_{\Q}}(x).
\end{align*}
Because the source and target samples are independent, the expression of $\ol f(x)$ reads similarly
\begin{align*}
    \ol f(x) = w_{\P} \ol f_{k_{\P}}(x) + w_{\Q} \ol f_{k_{\Q}}(x).
\end{align*}
Hence, for all $x \in \RR^d,$
\begin{align}
    \label{eq.two.sample.bv}
    (\wh f(x) - f(x))^2 &= \Big(w_{\P}(\wh f_{k_{\P}}(x) - f(x)) + w_{\Q}(\wh f_{k_{\Q}}(x) - f(x))\Big)^2\nonumber\\
    &\leq w_{\P}(\wh f_{k_{\P}}(x) - f(x))^2 + w_{\Q}(\wh f_{k_{\Q}}(x) - f(x))^2.
\end{align}
This leads to the risk being upper bounded as $\mE_{\Q_{\sX}}(\wh f, f) \leq w_{\P}\mE_{\Q_{\sX}}(\wh f_{k_{\P}}, f) + w_{\Q}\mE_{\Q_{\sX}}(\wh f_{k_{\Q}}, f),$ which allows for a straightforward proof of the following theorem.

\begin{theorem}
\label{th.transfer.two.samples.fixed.k.proof}
Let $\k_{\P}, \k_{\Q} >0, \tau > 1.$ Additionally, define
\begin{align*}
    u_T := \gamma\wedge \frac{2\beta}{d}, &\quad u_M := \frac{\rho}{\rho + d}\wedge \frac{2\beta}{d}\\
    r_T := \frac{\gamma}{\gamma + 1} \wedge \frac{2\beta}{2\beta + d}, &\quad r_M := \frac{\rho}{2\rho + d} \wedge \frac{2\beta}{2\beta + d}
\end{align*}
Let $\wh f$ be the two-sample $(k_{\P}, k_{\Q})$-NN regressor defined by \eqref{eq.two.sample.local.knn} with constant neighbour functions given by
\begin{align*}
    k_{\P} &= \lceil\k_{\P}\log(n + m)^{1/(r_T + 1)}n^{r_T/(r_T + 1)}\rceil,\qquad \text{and}\\
    k_{\Q} &= \lceil \k_{\Q}\log(n + m)^{1/(r_M + 1)}m^{r_M/(r_M + 1)}\rceil.
\end{align*}
Then, there exists $N = N(r_T, \k_{\P}) \geq 2d$ and $M = M(r_M, \k_{\Q}) \geq 2d$ such that for all $n \geq N$ and all $m\geq M,$ with probability at least $1 - 2n^{-\tau} - 2m^{-\tau}$ it holds that
\begin{align*}
    \mE_{\Q}(\wh f, f) \leq \frac{\k_{\P}C_{\P} + \k_{\Q}C_{\Q}}{\k_{\P} \wedge \k_{\Q}} \Bigg(\bigg(\frac{n}{\log(n + m)}\bigg)^{r_T/(r_T + 1)} + \bigg(\frac{m}{\log(n + m)}\bigg)^{r_M/(r_M + 1)}\Bigg)^{-1},\\
\end{align*}
where $C_{\P} = C_{\P}(\k_{\P}, r_T)$ and $C_{\Q} = C_{\Q}(\k_{\Q}, r_M)$ are the multiplicative constants of the rates of $f_{\P}$ and $f_{\Q}$ respectively, which are given by
\begin{align*}
    C_{R}(\k_{R}, t) := \Bigg[\frac{2C_\tau}{\k_R} + \bigg[2Lr_-^{2\beta} + 8F^2\bigg]\bigg(\frac{\k_R B_\tau}{a_-r_-^d}\bigg)^{t}\mT_{R_{\sX}}(t)\Bigg],
\end{align*}
for $R \in \{\P, \Q\}.$
\end{theorem}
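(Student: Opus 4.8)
The plan is to reduce the two-sample bound to two applications of the one-sample rate of Theorem~\ref{th.rates.standard.knn.proof}, exploiting that with \emph{constant} neighbour functions the weights $w_{\P}=k_{\P}/(k_{\P}+k_{\Q})$ and $w_{\Q}=k_{\Q}/(k_{\P}+k_{\Q})$ do not depend on $x$. Starting from \eqref{eq.two.sample.bv} -- which is Jensen's inequality for $t\mapsto t^2$ applied to the convex combination $w_{\P}+w_{\Q}=1$ -- I would integrate against $\Q_{\sX}$ and pull the (now constant) weights out of the integral to get
\begin{align*}
    \mE_{\Q_{\sX}}(\wh f, f) \leq w_{\P}\,\mE_{\Q_{\sX}}(\wh f_{k_{\P}}, f) + w_{\Q}\,\mE_{\Q_{\sX}}(\wh f_{k_{\Q}}, f),
\end{align*}
so that it only remains to bound each one-sample excess risk and then simplify.

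For the source term, $\wh f_{k_{\P}}$ is a standard $k_{\P}$-NN built from the $n$-sample from $\P_{\sX,\sY}$ and $\Q_{\sX}\in\mQ(\P_{\sX},\gamma,\rho)$ by hypothesis; the prescribed $k_{\P}$ has exactly the form required by Theorem~\ref{th.rates.standard.knn.proof}, with $m$ playing the role of the auxiliary parameter $t$ of the remark following it, so on an event of probability at least $1-2n^{-\tau}$ one obtains $\mE_{\Q_{\sX}}(\wh f_{k_{\P}}, f)\le C_{\P}(\log(n+m)/n)^{r_T}$ with $C_{\P}$ the explicit constant of that theorem (built from $\mT_{\P_{\sX}}$ and the exponent $u_T$). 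For the target term, $\wh f_{k_{\Q}}$ is a standard $k_{\Q}$-NN built from the $m$-sample from $\Q_{\sX,\sY}$, and here I would invoke the \emph{same} theorem with $\Q_{\sX}$ in the roles of \emph{both} source and target. The one point to verify is that $\Q_{\sX}$ is admissible as its own source--target pair: since $\Q_{\sX}\in\mP$ and satisfies \eqref{eq.pseudo.moment} with parameter $\rho$, one has $\Q_{\sX}\in\mQ(\Q_{\sX},\gamma',\rho)$ for every $\gamma'\le\rho/(\rho+d)$ (as recorded in the preliminaries); taking $\gamma'=\rho/(\rho+d)$ the resulting exponents are $\gamma'\wedge 2\beta/d=u_M$ and $\gamma'/(\gamma'+1)\wedge 2\beta/(2\beta+d)=\rho/(2\rho+d)\wedge 2\beta/(2\beta+d)=r_M$, with the constant $\mT_{\Q_{\sX}}(u_M)$ finite by Lemma~\ref{lem.ratio.bound}. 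This gives, on an event of probability at least $1-2m^{-\tau}$, $\mE_{\Q_{\sX}}(\wh f_{k_{\Q}}, f)\le C_{\Q}(\log(n+m)/m)^{r_M}$ with $C_{\Q}$ the analogous constant for $\Q_{\sX}$.

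Combining the two bounds by a union bound (probability at least $1-2n^{-\tau}-2m^{-\tau}$) and substituting the weights yields
\begin{align*}
    \mE_{\Q}(\wh f, f) \leq \frac{k_{\P}C_{\P}\,(\log(n+m)/n)^{r_T} + k_{\Q}C_{\Q}\,(\log(n+m)/m)^{r_M}}{k_{\P}+k_{\Q}}.
\end{align*}
The last step is bookkeeping: for $n,m$ large enough the ceilings satisfy $\lceil x\rceil\le 2x$, so the numerator is $\lesssim(\k_{\P}C_{\P}+\k_{\Q}C_{\Q})\log(n+m)$ while the denominator is bounded below by $\k_{\P}\log(n+m)^{1-r_T}n^{r_T}+\k_{\Q}\log(n+m)^{1-r_M}m^{r_M}$; dividing through by $\log(n+m)$ and then bounding the denominator from below by $(\k_{\P}\wedge\k_{\Q})\big[(n/\log(n+m))^{r_T}+(m/\log(n+m))^{r_M}\big]$ gives the claimed inequality, the harmless factor $2$ being absorbed into $N,M$ or into the constants. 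I do not expect a genuine obstacle: the only delicate points are the self-transfer verification $\Q_{\sX}\in\mQ(\Q_{\sX},\rho/(\rho+d),\rho)$ that pins the target exponent to $r_M$ (rather than anything involving $\gamma$), using the $\log(n+m)$ variant of the one-sample theorem so the two high-probability events share a common logarithmic factor, and tracking the thresholds on $n,m$ needed for the ceiling estimates.
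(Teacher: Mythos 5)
Your proof follows the paper's strategy exactly: decompose $\wh f$ as a convex combination of two one-sample estimators, invoke (the $\log(n+m)$ variant of) Theorem~\ref{th.rates.standard.knn.proof} for each, combine via the convexity bound \eqref{eq.two.sample.bv}, and take a union bound. You also make explicit the self-transfer verification $\Q_{\sX}\in\mQ(\Q_{\sX},\rho/(\rho+d),\rho)$, which the paper's proof uses but leaves implicit; this is a useful fill-in.

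There is, however, a mismatch in the bookkeeping that you should flag and reconcile. You write $\mE_{\Q_{\sX}}(\wh f_{k_{\P}}, f)\le C_{\P}(\log(n+m)/n)^{r_T}$, but the prescribed $k_{\P}=\lceil\k_{\P}\log(n+m)^{1/(r_T+1)}n^{r_T/(r_T+1)}\rceil$ has $n$-exponent $r_T/(r_T+1)$, not $r_T$, so if $r_T$ is taken as defined in the theorem, it is not of the form required by Theorem~\ref{th.rates.standard.knn.proof} (there, the $n$-exponent of $k$ coincides with the rate exponent $r$), and your cancellation $k_{\P}(\log(n+m)/n)^{r_T}\lesssim \k_{\P}\log(n+m)$ fails. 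In fact the theorem statement is internally inconsistent: for the prescribed $k_{\P},k_{\Q}$, the claimed bound (with exponents $r_T/(r_T+1)$ and $r_M/(r_M+1)$), and the appeal to Theorem~\ref{th.rates.standard.knn.proof} to all line up, $r_T,r_M$ must be redefined to be what the statement calls $u_T,u_M$, so that $r_T/(r_T+1)=\gamma/(\gamma+1)\wedge 2\beta/(2\beta+d)$ and $r_M/(r_M+1)=\rho/(2\rho+d)\wedge 2\beta/(2\beta+d)$, matching the main-body Theorem~\ref{th.transfer.standard.knn}. Your final bound with bare exponents $r_T,r_M$ is what one gets under the main-body conventions, but it does not match the stated target $\bigl((n/\log(n+m))^{r_T/(r_T+1)}+(m/\log(n+m))^{r_M/(r_M+1)}\bigr)^{-1}$ as written. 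Resolving this notational conflict (and, as you already note, tracking the harmless ceiling/rounding factor) is the only thing separating your argument from a complete proof.
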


\begin{proof}
Denote $\wh f  = w_{\P} \wh f_{\P} + w_{\Q}\wh f_{\Q}$ where $\wh f_{\P}$ and $\wh f_{\Q}$ are the $k_{\P}$ and $k_{\Q}$-NN estimators on the samples from $\P_{\sX, \sY}$ and $\Q_{\sX, \sY}$ respectively. A slight modification of the proof of Theorem \ref{th.rates.standard.knn.proof} shows that our choice of $k_{\P}$ and $k_{\Q}$ leads to
\begin{align*}
    \PP\Bigg\{\mE_{\Q}(\wh f_{\P}, f) &\leq C_{\P} \bigg(\frac{\log(n + m)}{n}\bigg)^{r_T/(r_T + 1)}\Bigg\} \geq 1 - 2n^{-\tau}\\
    \PP\Bigg\{\mE_{\Q}(\wh f_{\Q}, f) &\leq C_{\Q} \bigg(\frac{\log(n + m)}{m}\bigg)^{r_M/(r_M + 1)}\Bigg\} \geq 1 - 2m^{-\tau},
\end{align*}
while the constants $C_{\P}$ and $C_{\Q}$ remain the same as in the claim of Theorem \ref{th.rates.standard.knn.proof}. This enables us to work on the intersection of the previously considered events and derive,
\begin{align*}
    \mE_{\Q}(\wh f, f) &\leq \frac{k_{\P}}{k_{\P} + k_{\Q}}\mE_{\Q}(\wh f_{\P}, f) + \frac{k_{\Q}}{k_{\P} + k_{\Q}}\mE_{\Q}(\wh f_{\Q}, f)\\
    &\leq \frac{\k_{\P}C_{\P} + \k_{\Q}C_{\Q}}{\k_{\P} \wedge \k_{\Q}}\frac{\log(n + m)}{\log(n + m)^{(1/(r_T + 1)}n^{r_T/(r_T + 1)} + \log(n + m)^{1/(r_M + 1)}m^{r_M/(r_M + 1)}}\\
    &= \frac{\k_{\P}C_{\P} + \k_{\Q}C_{\Q}}{\k_{\P} \wedge \k_{\Q}}\Bigg(\bigg(\frac{n}{\log(n + m)}\bigg)^{r_T/(r_T + 1)} + \bigg(\frac{m}{\log(n + m)}\bigg)^{r_M/(r_M + 1)}\Bigg)^{-1}.\\
\end{align*}
Applying the union bound finishes the proof.
\end{proof}

To obtain our main result, we need an additional lemma that allows to lower bound $\zeta_{h_-}(x)$ for $x$ in a low-density region.
\begin{lemma}
    \label{lem.bounded.below.zeta.low.density}
    Let $\P \in \mP,$ and $k \in \{1, \dots, n\}.$ If $x \in \supp(\P)$ satisfies $p(x) \leq \alpha k/n,$ with $\alpha > 0,$ then, 
    \begin{align*}
         \zeta_{h_-}(x) \geq \bigg(\frac{D_\tau}{\alpha a_+}\bigg)^{1/d} \wedge r_+.
    \end{align*}
\end{lemma}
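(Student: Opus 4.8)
The plan is to prove the bound directly from the maximal mass property \eqref{eq.maximal.mass}: I will show that every sufficiently small ball around $x$ has $\P$-mass strictly below $h_- = D_\tau k/n$, which by the definition \eqref{eq.zeta.definition} of $\zeta_{h_-}$ immediately forces $\zeta_{h_-}(x)$ to be at least the claimed threshold.

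Concretely, first I would fix $x \in \supp(\P)$ with $p(x) \leq \alpha k/n$ and set $s := (D_\tau/(\alpha a_+))^{1/d} \wedge r_+$. For an arbitrary radius $r \in (0, s)$, the inequality $r < r_+$ holds, so the maximal mass property from Definition \ref{def.class.proba}~$(ii)$ applies and gives $\P\{\B(x, r)\} \leq a_+ p(x) r^d$. Combining this with $p(x) \leq \alpha k/n$ and with $r^d < D_\tau/(\alpha a_+)$ yields
\begin{align*}
    \P\{\B(x, r)\} \leq a_+ p(x) r^d \leq \frac{a_+ \alpha k}{n} r^d < \frac{a_+ \alpha k}{n} \cdot \frac{D_\tau}{\alpha a_+} = \frac{D_\tau k}{n} = h_-.
\end{align*}

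Since this holds for every $r < s$, no such radius satisfies $\P\{\B(x, r)\} \geq h_-$, and hence $\zeta_{h_-}(x) = \inf\{r > 0 : \P\{\B(x, r)\} \geq h_-\} \geq s = (D_\tau/(\alpha a_+))^{1/d} \wedge r_+$, which is the claim. There is no genuine obstacle here; the only point needing a little care is to keep the radius strictly below $r_+$ so that \eqref{eq.maximal.mass} is legitimately applicable, which is precisely why the stated lower bound is truncated at $r_+$.
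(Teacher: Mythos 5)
Your proof is correct and uses the same key ingredient as the paper, namely the maximal mass property \eqref{eq.maximal.mass} combined with the low-density hypothesis $p(x) \leq \alpha k/n$ to show that balls of radius below the threshold $s$ carry $\P$-mass strictly less than $h_-$. The paper phrases this as a case split with a contradiction argument (applying \eqref{eq.maximal.mass} at $r = \zeta_{h_-}(x)$ when $\zeta_{h_-}(x) \leq r_+$), whereas you argue directly from the definition of $\zeta_{h_-}$ as an infimum; the content is identical and your presentation is, if anything, slightly cleaner.
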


\begin{proof}
    If $\zeta_{h_-}(x) > r_+,$ then, the result is immediate. By contradiction, we show the claim in the case $\zeta_{h_-}(x) \leq r_+$. Let $t \in (0, r_+]$ be arbitrary and assume that $\zeta_{h_-}(x) \leq t^{1/d} \leq r_+.$ Then, by \eqref{eq.maximal.mass}, we have $h_- = D_\tau k/n \leq a_+tp(x).$ Rearranging the latter and using the assumption leads to
    \begin{align*}
        \frac{D_\tau k}{a_+ t n} \leq p(x) \leq \frac{\alpha k}n,
    \end{align*}
    which implies $t \geq D_\tau/(\alpha a_+).$ Hence
    \begin{align*}
         \zeta_{h_-}(x) \geq \bigg(\frac{D_\tau}{\alpha a_-}\bigg)^{1/d} \wedge r_+.
    \end{align*}
\end{proof}

\begin{theorem}[Rates for two-sample local nearest neighbours]
\label{th.rates.local.k.two.sample.proof}
    Let $\tau > 0, \k_{\P}, \k_{\Q} >0,$ and define
    \begin{align*}
        r_T := 2\beta/(2\beta + d) \vee \gamma,\text{ and }r_M := \rho/(\rho + d)\vee 2\beta/(2\beta + d).
    \end{align*}
    Let $\wh p$ and $\wh q$ be, respectively, the $\ell_{\P}$ and $\ell_{\Q}$-NN density estimators of $p$ and $q$ with
    \begin{align*}
        \ell_{\P} \geq \big\lceil V_\tau\log n\big\rceil, \text{ and }\ell_{\Q} &\geq \big\lceil V_\tau\log m\big\rceil.
    \end{align*}
    Finally, consider $\wh f$ to be the local $(k_{\P}, k_{\Q})$-NN estimator with neighbour functions given by
    \begin{align*}
        k_{\P}(x) &:= n\wedge \Big\lceil \kappa_{\P}\log(n + m)^{d/(2\beta + d)}(n\wh p(x))^{2\beta/(2\beta + d)}\Big\rceil \vee \big\lceil \log (n + m) \big\rceil\\
        k_{\Q}(x) &:= m\wedge \Big\lceil \kappa\log(n + m)^{d/(2\beta + d)}(m\wh q(x))^{2\beta/(2\beta + d)}\Big\rceil \vee \big\lceil \log (n + m) \big\rceil.
    \end{align*}
    Then, there exists $N \geq 2d$ and $M \geq 2d$ such that for all $n \geq N, m \geq M,$ it holds, with probability at least $1 - 3n^{1-\tau} - 3m^{1-\tau},$
    \begin{multline*}
        \mE_{\Q}(\wh f, f) \leq A_0\Bigg[\bigg(\frac{n}{\log(n+m)^{r_0/r_T}\ell_{\P}^{1 - r_0/r_T}}\bigg)^{r_T} + \bigg(\frac{m}{\log(n + m)^{r_0/r_M}\ell_{\Q}^{1 - r_0/r_M}}\bigg)^{r_M}\Bigg]^{-1}\\
        + 16F^2A_2\Bigg(\bigg(\frac{n}{\ell_{\P}}\bigg)^{r_T} + \bigg(\frac{m}{\ell_{\Q}}\bigg)^{r_M}\Bigg)^{-1} + 8F^2A_3 \Bigg(\bigg(\frac{n}{\ell_{\P}}\bigg)^{r_T} + \bigg(\frac m{\ell_{\Q}}\bigg)^{r_M}\bigg(\frac{\ell_{\Q}}{\ell_{\P}}\bigg)^{r_0}\Bigg)^{-1}\\
         + 8F^2A_4 \Bigg(\bigg(\frac{m}{\ell_{\Q}}\bigg)^{r_T} + \bigg(\frac n{\ell_{\P}}\bigg)^{r_M}\bigg(\frac{\ell_{\P}}{\ell_{\Q}}\bigg)^{r_0}\Bigg)^{-1},
    \end{multline*}
    with
    \begin{align*}
        r_0 &:= \frac{2\beta}{2\beta + d}\\
        \k &:= \k_{\P}^{(2\beta + d)/d} + \k_{\Q}^{(2\beta + d)/d}\\
        \alpha_p &:= (D_\tau/(C_pa_+))\wedge r_+^d\\
        \alpha_q &:= (D_\tau/(C_qa_+))\wedge r_+^d\\
        A_1 &:= \bigg[\frac{C_p^{r_T - r_0}\mT_{\P}}{\k_{\P}} \vee \frac{C_q^{r_M - r_0}\mT_{\Q}}{\k_{\Q}}\bigg]\cdot\bigg[4C_\tau\bigg(\frac{B_\tau}{a_-}\bigg)^{r_0} + 2\k L\bigg(\frac{a_+B_\tau}{a_-D_\tau}\bigg)^{2\beta/d + r_0}\bigg]\\
        A_2 &:= 2(\mT_{\P} \vee \mT_{\Q})(C_p^{r_T}\vee C_q^{r_M})\\
        A_3 &:= \big(C_p^{r_T}\mT_{\P}\big)\vee \frac{2\k_{\P}}{\k_{\Q}}\bigg(\frac{B_\tau}{\alpha_p C_q}\bigg)^{r_0}C_q^{r_M}\mT_{\Q}\\
        A_4 &:= \big(C_q^{r_M}\mT_{\Q}\big)\vee \frac{2\k_{\Q}}{\k_{\P}}\bigg(\frac{B_\tau}{\alpha_q C_p}\bigg)^{r_0}C_p^{r_T}\mT_{\P},
    \end{align*}
    and $C_p := C(\k_{\P}), \ C_q := C(\k_{\Q}),$ where
    \begin{align*}
        C(t) := c_u\vee c_\ell \vee \frac{B_\tau}{a_-t^{(2\beta + d)/(2\beta)}V_\tau} \vee \frac{(2c_ut)^{(2\beta + d)/d}}{V_\tau}\bigg(\frac{a_+}{B_\tau}\bigg)^{2\beta/d}.
    \end{align*}
    
\end{theorem}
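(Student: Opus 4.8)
The plan is to follow the outline of Section~\ref{sec.proof.outline}: reduce the two-sample squared error to a convex combination of the two one-sample squared errors, apply the pointwise bias--variance control of Proposition~\ref{prop.decomp.knn} to each, and then integrate over the four regions determined by whether each design density exceeds the threshold induced by its nearest-neighbour density estimator. First I would fix the working event. For the source sample, intersect the nearest-neighbour radius events $L_n^{\P_{\sX}}$ and $U_n^{\P_{\sX}}$ (from Propositions~\ref{prop.neighbours.distance.bound.zeta} and~\ref{prop.neighbours.distance.lower.bound.zeta}, in the variable-$k$ form of Remark~\ref{rem.bound.variable.k}, which also covers the estimator scale $\ell_{\P}\ge\lceil V_\tau\log n\rceil$) with the variance event $V_n^{\P_{\sX}}(\wh f_{\P})$ of Corollary~\ref{cor.variance.bound.nice.version}, and do the same for the target sample at sizes $m,\ell_{\Q}$. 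A union bound exactly as at the end of the proof of Theorem~\ref{th.transfer.local.knn.proof} gives this event probability at least $1-3n^{1-\tau}-3m^{1-\tau}$. While choosing $N,M$ I would also arrange that the thresholds of Lemma~\ref{lem.inclusions} hold (with $\log(n+m)$ replacing $\log n$, a harmless change since $\log(n+m)\ge\log n$; cf.\ the remark after Theorem~\ref{th.rates.standard.knn.proof}), so that on the working event the neighbour function $k_{\P}$ takes its explicit ceiling form on $\Delta_{\P}:=\Delta_n^{\P_{\sX}}(C_p,\ell_{\P})$, that $\Delta_{\P}\subseteq\Theta_n^{\P_{\sX}}(c_u,k_{\P})$, and that $\wh p\asymp p$ on $\Delta_{\P}$ by Lemma~\ref{lem.density.ratio}; symmetrically for $k_{\Q}$, $\Delta_{\Q}:=\Delta_m^{\Q_{\sX}}(C_q,\ell_{\Q})$ and $\wh q\asymp q$. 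This is where the regime $n\wedge m\gtrsim\log(n+m)$ gets recorded.

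Next I would invoke the convexity bound \eqref{eq.two.sample.bv}, namely $(\wh f(x)-f(x))^2\le w_{\P}(x)(\wh f_{\P}(x)-f(x))^2+w_{\Q}(x)(\wh f_{\Q}(x)-f(x))^2$ with $w_{\P}=k_{\P}/(k_{\P}+k_{\Q})$, and feed each summand to Proposition~\ref{prop.decomp.knn}. On $\Delta_{\P}$, the identity $d/(2\beta+d)+2\beta/(2\beta+d)=1$ together with $\wh p\asymp p$ shows that both the variance piece $C_\tau\log(n+m)/k_{\P}(x)$ and the bias piece $(B_\tau k_{\P}(x)/(a_-np(x)))^{2\beta/d}$ are of order $(\log(n+m)/(np(x)))^{r_0}$, and that $k_{\P}(x)(k_{\P}(x)/(np(x)))^{2\beta/d}\asymp\log(n+m)$; off $\Delta_{\P}$, the one-sample error is $\le 8F^2$ up to a vanishing variance term, while Lemma~\ref{lem.bounded.below.zeta.low.density} applied with $k=\ell_{\P}$, $\alpha=C_p$ forces $R_{\ell_{\P}}^{\P_{\sX}}(x)\gtrsim\alpha_p^{1/d}$, hence $n\wh p(x)\lesssim\ell_{\P}/\alpha_p$ and $k_{\P}(x)\lesssim\log(n+m)^{d/(2\beta+d)}\ell_{\P}^{r_0}\vee\log(n+m)$. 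The analogous statements hold for the target quantities with $p,n,\ell_{\P},C_p,\alpha_p$ replaced by $q,m,\ell_{\Q},C_q,\alpha_q$.

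I would then split $\mE_{\Q}(\wh f,f)=\int(\wh f(x)-f(x))^2\,d\Q_{\sX}(x)$ over the four regions $\Delta_{\P}\cap\Delta_{\Q}$, $\Delta_{\P}^c\cap\Delta_{\Q}$, $\Delta_{\P}\cap\Delta_{\Q}^c$, $\Delta_{\P}^c\cap\Delta_{\Q}^c$ as in \eqref{eq.two.sample.decomposition.proof.outline}. On $\Delta_{\P}\cap\Delta_{\Q}$ the weighted sum collapses to $\lesssim\log(n+m)/(k_{\P}(x)+k_{\Q}(x))\asymp\log(n+m)^{r_0}/((np(x))^{r_0}+(mq(x))^{r_0})$; on $\Delta_{\P}^c\cap\Delta_{\Q}$ one drops the bounded source error through $w_{\P}\le k_{\P}/k_{\Q}\lesssim(\ell_{\P}/(\alpha_p mq(x)))^{r_0}$ and keeps the target error $\lesssim(\log(n+m)/(mq(x)))^{r_0}$; symmetrically on $\Delta_{\P}\cap\Delta_{\Q}^c$; on $\Delta_{\P}^c\cap\Delta_{\Q}^c$ the error is $\lesssim F^2$ pointwise. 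Integrating each region against $\Q_{\sX}$ reduces everything to integrals of the shape $\int_{\Delta_{\P}}(np(x))^{-r_0}q(x)\,dx$, $\int_{\Delta_{\Q}}(mq(x))^{-r_0}q(x)\,dx$, $\Q_{\sX}\{\Delta_{\P}^c\}$, and $\Q_{\sX}\{\Delta_{\Q}^c\}$, which are controlled by the two elementary truncation inequalities recalled at the close of Section~\ref{sec.proof.outline}: when $r_0\le\gamma$ one bounds directly by $\mT_{\P_{\sX}}(r_0)$, and when $r_0>\gamma$ one uses $p\ge C_p\ell_{\P}/n$ on $\Delta_{\P}$ to peel off the excess power $r_0-\gamma$ and bound by $\mT_{\P_{\sX}}(\gamma)$; either way the source contribution takes the form $\big(n/(\log(n+m)^{r_0/r_T}\ell_{\P}^{1-r_0/r_T})\big)^{-r_T}$ with $r_T=r_0\wedge\gamma$, and the parallel computation for $q$ — using that $\mT_{\Q_{\sX}}(\rho/(\rho+d))<\infty$ by the pseudo-moment condition \eqref{eq.pseudo.moment} and Lemma~\ref{lem.ratio.bound}(ii), together with Lemma~\ref{lem.ratio.bound} to interpolate — gives the target contribution with $r_M=r_0\wedge\rho/(\rho+d)$. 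Whenever a region has been bounded by both a source rate $A^{-1}$ and a target rate $B^{-1}$, the chain $1/(A+B)\le\min(1/A,1/B)\le 2/(A+B)$ turns the bound into the stated reciprocal-of-a-sum form, and the remaining multiplicative constants are collected into $A_0,\dots,A_4$.

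\textbf{Main obstacle.} The conceptual steps — the convexity reduction and the pointwise bias--variance estimate — are essentially immediate from the preliminary results; the real difficulty is the bookkeeping in the two mixed regions $\Delta_{\P}^c\cap\Delta_{\Q}$ and $\Delta_{\P}\cap\Delta_{\Q}^c$, where one must simultaneously control the ``wrong-sample'' weight via the constant lower bound on the $\ell$-th nearest-neighbour radius (Lemma~\ref{lem.bounded.below.zeta.low.density}), track how the undersmoothing level $\ell_{\P}$ (resp.\ $\ell_{\Q}$) propagates into the rate — this is exactly what produces the $\ell_{\Q}/\ell_{\P}$ and $\ell_{\P}/\ell_{\Q}$ cross-terms appearing in $A_3$ and $A_4$ — and verify that all the regime conditions ($k\ge\lceil V_\tau\log\cdot\rceil$ for the lower radius bounds, $k\ge\lceil\log\cdot\rceil$ elsewhere, and the thresholds of Lemma~\ref{lem.inclusions}) hold on all of $\RR^d$ for both samples at once, which is what pins down the regime $n\wedge m\gtrsim\log(n+m)$.
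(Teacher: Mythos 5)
Your proposal follows exactly the same route as the paper's proof: same working event (intersection of $L_n$, $U_n$, $V_n$ events for both samples with a union bound), same convexity reduction via \eqref{eq.two.sample.bv}, same application of Proposition~\ref{prop.decomp.knn} with $\Delta_{\P}=\Delta_n^{\P_{\sX}}(C_p,\ell_{\P})$ and $\Delta_{\Q}=\Delta_m^{\Q_{\sX}}(C_q,\ell_{\Q})$, same four-region split, same use of Lemma~\ref{lem.bounded.below.zeta.low.density} to control the wrong-sample weight on the mixed regions, and same truncation/interpolation step via Lemma~\ref{lem.ratio.bound}. The plan is correct and matches the paper's argument in both structure and key technical ingredients.
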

\begin{proof}
    Let 
    \begin{align*}
        C(t) := c_u\vee c_\ell \vee \frac{B_\tau}{a_-t^{(2\beta + d)/(2\beta)}V_\tau} \vee \frac{(2c_ut)^{(2\beta + d)/d}}{V_\tau}\bigg(\frac{a_+}{B_\tau}\bigg)^{2\beta/d},
    \end{align*}
    and define $C_p := C(\k_{\P}), \ C_q := C(\k_{\Q}).$ For $R \in \{\P, \Q\},$ Define the event
    \begin{align*}
        E_n^{R_{\sX}}(\ell, \wh f_R) := L_n^{R_{\sX}} \cap \U_n^{R_{\sX}}(\ell) \cap V_n^{R_{\sX}}(\wh f_R).
    \end{align*}
    We work on the event
    \begin{align}
        \label{eq.event.two.sample.local}
        E_n^{\P_{\sX}}(\ell_{\P}, \wh f_{\P}) \cap E_m^{\Q_{\sX}}(\ell_{\Q}, \wh f_{\Q}).
    \end{align}
    Let $X \sim \Q_{\sX}$ independent of the samples. We start by decomposing the risk as in \eqref{eq.two.sample.bv}. We get
    \begin{align*}
        \mE_{\Q}(\wh f, f) &\leq \E\bigg[w_{\P}(X)\big(\wh f_{\P}(X) - f(X)\big)^2 + w_{\Q}(X) \big(\wh f_{\Q}(X) - f(X)\big)^2\bigg].
    \end{align*}
    Let $\Delta_{\P} := \Delta_n^{\P_{\sX}}(C_p, \ell_{\P})$ and $\Delta_{\Q} := \Delta_m^{\Q_{\sX}}(C_q, \ell_{\Q}).$ Consider also the set $\Delta := \Delta_{\P}\cap \Delta_{\Q}, \ \Gamma_{\P} := \Delta_{\P}\cap \Delta_{\Q}^c,\ \Gamma_{\Q} := \Delta_{\P}^c\cap \Delta_{\Q},$ and $\nabla := \Delta_{\P}^c\cap \Delta_{\Q}^c.$ As in Theorem \ref{th.transfer.local.knn}, we apply Proposition \ref{prop.decomp.knn} twice to obtain that on the event \eqref{eq.event.two.sample.local},
    \begin{align*}
        \mE_{\Q}(\wh f, f) &\leq \E\bigg[V(X) + b^2(X) + D(X)\bigg]
    \end{align*}
    where 
    \begin{align*}
        V(x) &:= 2C_\tau\Bigg[w_{\P}(x)\frac{\log n}{k_{\P}(x)} + w_{\Q}\frac{\log m}{k_{\Q}(x)}\Bigg] \leq \frac{4C_\tau\log(n + m)}{k_{\P}(x) + k_{\Q}(x)}\\
        b^2(x) &:= 2L\Bigg[w_{\P}(x) \bigg(\frac{B_\tau k_{\P}(x)}{a_-np(x)}\bigg)^{2\beta/d} + w_{\Q}(x) \bigg(\frac{B_\tau k_{\Q}(x)}{a_-np(x)}\bigg)^{2\beta/d}\Bigg]\mathds{1}(x \in \Delta),\\
        D(x) &:= 8F^2\Bigg[\frac{k_{\P}(x)}{k_{\P}(x) + k_{\Q}(x)}\mathds{1}(x \in \Gamma_{\Q}) + \frac{k_{\Q}(x)}{k_{\P}(x) + k_{\Q}(x)}\mathds{1}(x \in \Gamma_{\P})\Bigg]+ 16F^2\mathds{1}(x \in \nabla).
    \end{align*}
    We now denote $r_0 := 2\beta/(2\beta + d), \ \mT_{\P} := \int q(x)/p(x)^{r_T}\, dx$ and $\mT_{\Q} := \int q(x)^{1 - r_M}\, dx.$ We proceed as in \eqref{eq.local.one.sample.pointwise.variance.bound} (Definition of $k_{\P}, k_{\Q},$ Lemma \ref{lem.inclusions} and Lemma \ref{lem.density.ratio} $(ii)$) to obtain, for $x \in \Delta,$
    \begin{align}
    \label{eq.two.sample.local.variance.bound}
        V(x) &\leq 4C_\tau \Bigg(\k_{\P}\frac{\log(n+m)^{1 - r_0}(n\wh p(x))^{r_0}}{\log(n+ m)} + \k_{\Q}\frac{\log(n+m)^{1 - r_0}(m\wh q(x))^{r_0}}{\log(n+ m)}\Bigg)^{-1}\nonumber\\
        &\leq 4C_\tau \Bigg(\k_{\P}\bigg(\frac{a_-np(x)}{B_\tau\log(n+ m)}\bigg)^{r_0} + \k_{\Q}\bigg(\frac{a_-m q(x)}{B_\tau \log(n + m)}\bigg)^{r_0}\Bigg)^{-1}\nonumber\\
        &= 4C_\tau\bigg(\frac{B_\tau}{a_-}\bigg)^{r_0} \Bigg(\k_{\P}\bigg(\frac{np(x)}{\log(n+ m)}\bigg)^{r_0} + \k_{\Q}\bigg(\frac{m q(x)}{\log(n + m)}\bigg)^{r_0}\Bigg)^{-1}
    \end{align}
    Conversely, by definition of $k_{\P}, k_{\Q},$ for $x \in \Delta^c,$
    \begin{align*}
        V(x)\mathds{1}(x \in \Delta^c) &\leq 4C_\tau.
    \end{align*}
    Next, we proceed as in \eqref{eq.local.one.sample.pointwise.bias.hd.bound} and obtain, for $x \in \Delta,$
    \begin{align*}
        k_{\P}(x)\bigg(\frac{B_\tau k_{\P}(x)}{a_-np(x)}\bigg)^{2\beta/d} &= \bigg(\frac{B_\tau}{a_-}\bigg)^{2\beta/d}\frac{k_{\P}(x)^{(2\beta + d)/d}}{(np(x))^{2\beta/d}}\\
        &=\bigg(\frac{B_\tau}{a_-}\bigg)^{2\beta/d}\k_{\P}^{(2\beta + d)/d}\log(n+m)\bigg(\frac{\wh p(x)}{p(x)}\bigg)^{2\beta/d}\\
        &\leq \k_{\P}^{(2\beta + d)/d}\bigg(\frac{a_+B_\tau}{a_-D_\tau}\bigg)^{2\beta/d}\log(n+m).
    \end{align*}
    The same analysis for the second term in the expression of $b^2(x)$ yields
    \begin{align*}
        k_{\Q}(x)\bigg(\frac{B_\tau k_{\Q}(x)}{a_-mq(x)}\bigg)^{2\beta/d} \leq \k_{\Q}^{(2\beta + d)/d}\bigg(\frac{a_+B_\tau}{a_-D_\tau}\bigg)^{2\beta/d}\log(n+m).
    \end{align*}
    Hence,
    \begin{align*}
        b^2(x) &\leq 2\k L\bigg(\frac{a_+B_\tau}{a_-D_\tau}\bigg)^{2\beta/d}\frac{\log(n+m)}{k_{\P}(x) + k_{\Q}(x)},
    \end{align*}
    where $\k := \k_{\P}^{(2\beta + d)/d} + \k_{\Q}^{(2\beta + d)/d}.$ The same analysis as in \eqref{eq.two.sample.local.variance.bound} leads to
    \begin{align}
        \label{eq.two.sample.local.bias.bound}
        b^2(x) &\leq 2\k L\bigg(\frac{a_+B_\tau}{a_-D_\tau}\bigg)^{2\beta/d}\Bigg(\bigg(\frac{a_-np(x)}{B_\tau\log(n+ m)}\bigg)^{r_0} + \bigg(\frac{a_-m q(x)}{B_\tau \log(n + m)}\bigg)^{r_0}\Bigg)^{-1}\nonumber\\
        &= 2\k L\bigg(\frac{a_+B_\tau}{a_-D_\tau}\bigg)^{2\beta/d + r_0}\Bigg(\k_{\P}\bigg(\frac{np(x)}{\log(n+ m)}\bigg)^{r_0} + \k_{\Q}\bigg(\frac{m q(x)}{\log(n + m)}\bigg)^{r_0}\Bigg)^{-1}.
    \end{align}
    We now integrate the pointwise bounds derived above with respect to $\Q_{\sX}.$ We have
    \begin{align*}
        \int_{\Delta} &\Bigg(\k_{\P}\bigg(\frac{np(x)}{\log(n+ m)}\bigg)^{r_0} + \k_{\Q}\bigg(\frac{q(x)}{\log(n + m)}\bigg)^{r_0}\Bigg)^{-1}\, d\Q_{\sX}(x)\\
        &\leq \int_{\Delta} \Bigg(\k_{\P}\bigg(\frac{np(x)}{\log(n+ m)}\bigg)^{r_0} \vee \k_{\Q}\bigg(\frac{m q(x)}{\log(n + m)}\bigg)^{r_0}\Bigg)^{-1}\, d\Q_{\sX}(x)\\
        &\leq \frac 1{\k_{\P}}\int_{\Delta} \bigg(\frac{\log(n+ m)}{np(x)}\bigg)^{r}\, d\Q_{\sX}(x) \wedge \frac 1{\k_{\Q}}\int_{\Delta}\bigg(\frac{\log(n + m)}{m q(x)}\bigg)^{r_0}\, d\Q_{\sX}(x)\\
        &\leq \log(n+m)^{r_0}\Bigg[\frac 1{\k_{\P}n^{r_0}}\int_{\Delta} \frac{q(x)}{p(x)^{r_T}}p(x)^{r_T - r_0}\, dx \wedge \frac{1}{\k_{\Q}m^{r_0}}\int_{\Delta} \frac{q(x)}{q(x)^{r_M}}q(x)^{r_M - r_0}\, dx\Bigg]\\
        &\leq \log(n+m)^{r_0}\Bigg[\frac{1}{\k_{\P}n^{r_0}}\bigg(\frac{C_p\ell_{\P}}{n}\bigg)^{r_T - r_0}\mT_{\P} \wedge \frac{1}{\k_{\Q}m^{r_0}}\bigg(\frac{C_q\ell_{\Q}}{m}\bigg)^{r_M - r_0}\mT_{\Q}\Bigg]\\
        &\leq A_0\Bigg[\bigg(\frac{n}{\log(n+m)^{r_0/r_T}\ell_{\P}^{1 - r_0/r_T}}\bigg)^{r_T} + \bigg(\frac{m}{\log(n + m)^{r_0/r_M}\ell_{\Q}^{1 - r_0/r_M}}\bigg)^{r_M}\Bigg]^{-1},
    \end{align*}
    where $A_0$ is defined as
    \begin{align*}
        A_0 := \frac{C_p^{r_T - r_0}\mT_{\P}}{\k_{\P}} \vee \frac{C_q^{r_M - r_0}\mT_{\Q}}{\k_{\Q}}.
    \end{align*}
    Ultimately, we bound the value of $\E[D(X)].$ We proceed as in \eqref{eq.local.one.sample.bias.lp.bound}. $\E[D(X)]$ is comprised of three terms. The third one is $16F^2\Q_{\sX}\big(\nabla\big) = 16F^2\int_{\nabla} q(x)\, dx.$ The density ratio exponent assumption implies that $\lambda(\supp(\Q_{\sX}) \setminus \supp(\P_{\sX})) = 0,$ where $\lambda$ is the Lebesgue measure on $\RR^d.$ This enables the derivation
    \begin{align*}
        \int_{\nabla} q(x)\, dx &= \int_{\nabla} \frac{q(x)}{p(x)^{r_T}\wedge q(x)^{r_M}} \big(p(x)^{r_T} \wedge q(x)^{r_M}\big)\, dx\\
        &\leq \Bigg(\bigg(\frac{C_p\ell_{\P}}{n}\bigg)^{r_T} \wedge \bigg(\frac{C_q\ell_{\Q}}{m}\bigg)^{r_M}\Bigg)\int_{\nabla} \frac{q(x)}{p(x)^{r_T}} \vee \frac{q(x)}{q(x)^{r_M}}\, dx\\
        &\leq \Bigg(\bigg(\frac{C_p\ell_{\P}}{n}\bigg)^{r_T} \wedge \bigg(\frac{C_q\ell_{\Q}}{m}\bigg)^{r_M}\Bigg)\int_{\nabla} \frac{q(x)}{p(x)^{r_T}} + \frac{q(x)}{q(x)^{r_M}}\, dx\\
        &\leq \Bigg(\bigg(\frac{C_p\ell_{\P}}{n}\bigg)^{r_T} \wedge \bigg(\frac{C_q\ell_{\Q}}{m}\bigg)^{r_M}\Bigg)\big(\mT_{\P} + \mT_{\Q}\big)\\
        &\leq A_2\Bigg(\bigg(\frac{n}{\ell_{\P}}\bigg)^{r_T} + \bigg(\frac{m}{\ell_{\Q}}\bigg)^{r_M}\Bigg)^{-1},
    \end{align*}
    with 
    \begin{align*}
        A_2 := 2(\mT_{\P} \vee \mT_{\Q})(C_p^{r_T}\vee C_q^{r_M}).
    \end{align*}
    Next, we bound the two other terms in the expression of $\E[D(X)].$ We start with $8F^2\int_{\Gamma_{\Q}}k_{\P}(x)/(k_{\P}(x) + k_{\Q}(x))\, d\Q_{\sX}(x).$ On $\Gamma_{\Q},$
    \begin{align*}
        \frac{k_{\P}(x)}{k_{\P}(x) + k_{\Q}(x)} &\leq 1 \wedge \frac{k_{\P}(x)}{k_{\Q}(x)},
    \end{align*}
    and, by Lemma \ref{lem.inclusions} $(i),$ it holds that for $n \geq N, \ k(x) \leq 2\k_{\P}\log(n + m)^{1 - r_0}(n\wh p(x))^{r_0},$ therefore,
    \begin{align*}
        \frac{k_{\P}(x)}{k_{\Q}(x)} &\leq \frac{2\k_{\P}}{\k_{\Q}}\bigg(\frac{\ell_{\P}}{\ell_{\Q}}\frac{R^{\Q}_{\ell_{\Q}}(x)^d}{R^{\P}_{\ell_{\P}}(x)^d}\bigg)^{r_0}
    \end{align*}
    Applying Lemma \ref{lem.lower.bound.neighbours.density}, Lemma \ref{lem.bounded.below.zeta.low.density} with $\alpha = C_p,$ and $x \in \Delta_{\P}^c,$ we can bound $R_{\ell_{\P}}^{\P}(x)^d \geq (D_\tau/(C_pa_+))\wedge r_+^d =: \alpha_p.$ Additionally, we bound $R_{\ell_{\Q}}^{\Q}(x)^d$ using Lemma \ref{lem.bound.neighbours.density} and we obtain
    \begin{align*}
        \bigg(\frac{\ell_{\P}}{\ell_{\Q}}\frac{R^{\Q}_{\ell_{\Q}}(x)^d}{R^{\P}_{\ell_{\P}}(x)^d}\bigg)^{r_0} &\leq \bigg(\frac{B_\tau}{\alpha_pa_-}\frac{\ell_{\P}}{ mq(x)}\bigg)^{r_0}.
    \end{align*}
    Hence,
    \begin{align*}
        \int_{\Gamma_{\Q}}\frac{k_{\P}(x)}{k_{\P}(x) + k_{\Q}(x)} q(x)\, dx &\leq \int_{\Gamma_{\Q}} q(x) \wedge \Bigg[\frac{2\k_{\P}}{\k_{\Q}}\bigg(\frac{B_\tau\ell_{\P}}{\alpha_pa_-mq(x)}\bigg)^{r_0}q(x)\Bigg]\, dx\\
        &\leq \Bigg[\int_{\Gamma_{\Q}} q(x)\, dx\Bigg] \wedge \int_{\Gamma_{\Q}}\frac{2\k_{\P}}{\k_{\Q}}\bigg(\frac{B_\tau\ell_{\P}}{\alpha_pa_-mq(x)}\bigg)^{r_0}q(x)\, dx\\
        &\leq \Bigg[\bigg(\frac{C_p \ell_{\P}}{n}\bigg)^{r_T}\mT_{\P}\Bigg] \wedge \Bigg[\frac{2\k_{\P}}{\k_{\Q}}\bigg(\frac{B_\tau \ell_{\P}}{\alpha_pa_-m}\bigg)^{r_0}\bigg(\frac{C_q \ell_{\Q}}{m}\bigg)^{r_M - r_0}\mT_{\Q}\Bigg].
    \end{align*}
    Let $\alpha_q := (D_\tau/(C_qa_+))\wedge r_+^d.$ A similar analysis leads to
    \begin{align*}
        \int_{\Gamma_{\P}} \frac{k_{\P}(x)}{k_{\P}(x) + k_{\Q}(x)}q(x)\, dx &\leq \Bigg[\bigg(\frac{C_q \ell_{\Q}}{m}\bigg)^{r_M}\mT_{\Q}\Bigg] \wedge \Bigg[\frac{2\k_{\Q}}{\k_{\P}}\bigg(\frac{B_\tau \ell_{\Q}}{\alpha_qa_-n}\bigg)^{r_0}\bigg(\frac{C_p \ell_{\P}}{n}\bigg)^{r_T - r_0}\mT_{\P}\Bigg].
    \end{align*}
    Finally,
    \begin{align*}
        \int_{\Gamma_{\Q}}\frac{k_{\P}(x)}{k_{\P}(x) + k_{\Q}(x)}q(x)\, dx &\leq A_3 \Bigg(\bigg(\frac{n}{\ell_{\P}}\bigg)^{r_T} + \bigg(\frac m{\ell_{\Q}}\bigg)^{r_M}\bigg(\frac{\ell_{\Q}}{\ell_{\P}}\bigg)^{r_0}\Bigg)^{-1}\\
        \int_{\Gamma_{\P}}\frac{k_{\Q}(x)}{k_{\P}(x) + k_{\Q}(x)}q(x)\, dx &\leq A_4 \Bigg(\bigg(\frac{m}{\ell_{\Q}}\bigg)^{r_T} + \bigg(\frac n{\ell_{\P}}\bigg)^{r_M}\bigg(\frac{\ell_{\P}}{\ell_{\Q}}\bigg)^{r_0}\Bigg)^{-1},
    \end{align*}
    with
    \begin{align*}
        A_3 &:= \big(C_p^{r_T}\mT_{\P}\big)\vee \frac{2\k_{\P}}{\k_{\Q}}\bigg(\frac{B_\tau}{\alpha_p C_q}\bigg)^{r_0}C_q^{r_M}\mT_{\Q}\\
        A_4 &:= \big(C_q^{r_M}\mT_{\Q}\big)\vee \frac{2\k_{\Q}}{\k_{\P}}\bigg(\frac{B_\tau}{\alpha_q C_p}\bigg)^{r_0}C_p^{r_T}\mT_{\P}.\\
    \end{align*}
    Combining everything yields
    \begin{multline*}
        \mE_{\Q}(\wh f, f) \leq A_1\Bigg[\bigg(\frac{n}{\log(n+m)^{r_0/r_T}\ell_{\P}^{1 - r_0/r_T}}\bigg)^{r_T} + \bigg(\frac{m}{\log(n + m)^{r_0/r_M}\ell_{\Q}^{1 - r_0/r_M}}\bigg)^{r_M}\Bigg]^{-1}\\
        + 16F^2A_2\Bigg(\bigg(\frac{n}{\ell_{\P}}\bigg)^{r_T} + \bigg(\frac{m}{\ell_{\Q}}\bigg)^{r_M}\Bigg)^{-1} + 8F^2A_3 \Bigg(\bigg(\frac{n}{\ell_{\P}}\bigg)^{r_T} + \bigg(\frac m{\ell_{\Q}}\bigg)^{r_M}\bigg(\frac{\ell_{\Q}}{\ell_{\P}}\bigg)^{r_0}\Bigg)^{-1}\\
         + 8F^2A_4 \Bigg(\bigg(\frac{m}{\ell_{\Q}}\bigg)^{r_T} + \bigg(\frac n{\ell_{\P}}\bigg)^{r_M}\bigg(\frac{\ell_{\P}}{\ell_{\Q}}\bigg)^{r_0}\Bigg)^{-1},
    \end{multline*}
    with
    \begin{align*}
        A_1 := \bigg[4C_\tau\bigg(\frac{B_\tau}{a_-}\bigg)^{r_0} + 2\k L\bigg(\frac{a_+B_\tau}{a_-D_\tau}\bigg)^{2\beta/d + r_0}\bigg]A_0.
    \end{align*}
    We use Proposition \ref{prop.neighbours.distance.bound.zeta}, Proposition \ref{prop.neighbours.distance.lower.bound.zeta}, Corollary \ref{cor.variance.bound.nice.version}, and Remark \ref{rem.bound.variable.k} together with the union bound to obtain
    \begin{align*}
        \PP\bigg\{E_n^{\P_{\sX}}(\ell_{\P}, \wh f_{\P}) \cap E_m^{\Q_{\sX}}(\ell_{\Q}, \wh f_{\Q})\bigg\} \geq 1 - 2n^{-\tau} - n^{1-\tau} - 2m^{-\tau} - m^{1-\tau} \geq 1 - 3n^{1-\tau} - 3m^{1-\tau}.
    \end{align*}
\end{proof}

\begin{remark}
    The result of Theorem \ref{th.transfer.local.knn} $(ii)$ is obtained by taking $\ell_{\P} = \ell_{\Q} = \lceil V_\tau\log(n + m)\rceil$ in the regime $n\wedge m \geq \lceil V_\tau\log (n + m)\rceil.$
\end{remark}

\begin{lemma}
    \label{lem.example.exponential}
    Let $\P_{\sX}  = \mE(\lambda),$ then, $\P_{\sX} \in \mP(\lambda, \exp(-\lambda), 1, 2\sinh(\lambda)/\lambda, 1)$ and $\P_{\sX}$ satisfies \eqref{eq.pseudo.moment} for any $\rho \geq 0.$ Additionally, if $\P_{\sX} = \mE(\lambda_{\P}), \ \Q_{\sX} = \mE(\lambda_{\Q}),$ and $\lambda = \lambda_{\P} \vee \lambda_{\Q},$ then, $\P_{\sX}$ and $\Q_{\sX}$ are both in $\mP(\lambda, \exp(-\lambda), 1, 2\sinh(\lambda)/\lambda, 1),$ and $\Q_{\sX} \in \mQ(\P_{\sX}, \gamma, \rho)$ for all $\gamma < \lambda_{\Q}/\lambda_{\P}$ and all $\rho \geq 0.$
\end{lemma}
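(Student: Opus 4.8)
The plan is to verify the four structural properties of $\mE(\lambda)$ — the density bound, the minimal mass property \eqref{eq.minimal.mass}, the maximal mass property \eqref{eq.maximal.mass}, and the pseudo-moment condition \eqref{eq.pseudo.moment} — and then to deduce the two-sample statement by a monotonicity argument in the parameters together with a single explicit integral for the density ratio exponent. Throughout we work in dimension $d = 1$, so $\B(x, r) = [x - r, x + r]$ and $\mE(\lambda)$ has density $p(x) = \lambda e^{-\lambda x}\mathds{1}(x \geq 0)$; in particular $\|p\|_\infty = p(0) = \lambda$, so $\mE(\lambda) \in \mM(\lambda)$.

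For the mass properties I would split into the cases $x \geq r$, $0 \leq x < r$, and $x < 0$. The case $x < 0$ is vacuous for \eqref{eq.minimal.mass} (then $p(x) = 0$), and $\supp(\mE(\lambda)) = [0,\infty)$, so nothing is required there for \eqref{eq.maximal.mass}. When $x \geq r$, a direct integration gives $\P_{\sX}\{\B(x, r)\} = e^{-\lambda(x-r)} - e^{-\lambda(x+r)} = \tfrac{2\sinh(\lambda r)}{\lambda r}\,p(x)\,r$; since $t \mapsto \sinh(t)/t$ is nondecreasing on $\RR_+$ with limit $1$ at the origin, one has $2 \leq \tfrac{2\sinh(\lambda r)}{\lambda r} \leq \tfrac{2\sinh(\lambda)}{\lambda}$ for $r \in (0,1]$, which gives \eqref{eq.minimal.mass} with $a_- = e^{-\lambda}$ (using $e^{-\lambda} < 2$) and \eqref{eq.maximal.mass} with $a_+ = 2\sinh(\lambda)/\lambda$. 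When $0 \leq x < r \leq 1$ the ball is truncated at the edge of the support and $\P_{\sX}\{\B(x, r)\} = 1 - e^{-\lambda(x + r)} = e^{-\lambda x}\bigl(e^{\lambda x} - e^{-\lambda r}\bigr)$; the upper bound follows from $e^{\lambda x} - e^{-\lambda r} \leq e^{\lambda r} - e^{-\lambda r} = 2\sinh(\lambda r) \leq \tfrac{2\sinh(\lambda)}{\lambda}\lambda r$, and the lower bound from $e^{\lambda x} - e^{-\lambda r} \geq 1 - e^{-\lambda r} \geq \lambda r\, e^{-\lambda r} \geq \lambda r\, e^{-\lambda}$, where the middle inequality is $e^{\lambda r} \geq 1 + \lambda r$ multiplied through by $e^{-\lambda r}$ and the last uses $r \leq 1$. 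This establishes both mass properties with $r_- = r_+ = 1$. The pseudo-moment condition is then immediate: $\int_0^\infty (\lambda e^{-\lambda x})^{1/(\rho + 1)}\, dx < \infty$ for every $\rho \geq 0$ (alternatively, invoke Lemma \ref{lem.pseudo.moment}, since $\mE(\lambda)$ has finite moments of all orders).

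For the two-sample claim set $\lambda = \lambda_{\P} \vee \lambda_{\Q}$. Since $\lambda_{\P} \leq \lambda$, $e^{-\lambda} \leq e^{-\lambda_{\P}}$ and $2\sinh(\lambda_{\P})/\lambda_{\P} \leq 2\sinh(\lambda)/\lambda$ by monotonicity of $\sinh(t)/t$ (and symmetrically with $\lambda_{\Q}$), and since \eqref{eq.minimal.mass} only weakens when $a_-$ decreases while \eqref{eq.maximal.mass} and membership in $\mM(\cdot)$ only weaken when $a_+$ and $\ol p$ increase, the first part yields $\mE(\lambda_{\P}), \mE(\lambda_{\Q}) \in \mP(\lambda, e^{-\lambda}, 1, 2\sinh(\lambda)/\lambda, 1)$. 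It remains to check the density ratio exponent, which is the one nontrivial computation:
\begin{align*}
    \int \frac{q(x)}{p(x)^{\gamma}}\, dx = \frac{\lambda_{\Q}}{\lambda_{\P}^{\gamma}}\int_0^\infty e^{-(\lambda_{\Q} - \gamma\lambda_{\P})x}\, dx,
\end{align*}
which is finite precisely when $\lambda_{\Q} - \gamma\lambda_{\P} > 0$, i.e.\ $\gamma < \lambda_{\Q}/\lambda_{\P}$. Combining this with $\Q_{\sX} = \mE(\lambda_{\Q}) \in \mP$ and the pseudo-moment property for every $\rho \geq 0$, the definition of $\mQ(\P_{\sX}, \gamma, \rho)$ gives $\Q_{\sX} \in \mQ(\P_{\sX}, \gamma, \rho)$ for all $\gamma < \lambda_{\Q}/\lambda_{\P}$ and all $\rho \geq 0$. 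The only step needing genuine care is the truncated case $0 \leq x < r$ of the mass properties; every other estimate reduces to the monotonicity and convexity of $t \mapsto \sinh(t)/t$ and $t \mapsto e^t$ and to one elementary integral.
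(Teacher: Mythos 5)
Your proof is correct and follows essentially the same approach as the paper: a direct computation of the ball masses with a split at the boundary $x < r$, the monotonicity of $t \mapsto \sinh(t)/t$ to control the resulting constants, and one explicit exponential integral each for the pseudo-moment and density-ratio-exponent conditions. The only cosmetic difference is that for the minimal mass property the paper obtains a single bound $\P_{\sX}\{\B(x,t)\} \geq t e^{-\lambda t} p(x)$ valid for all $x \geq 0$ by shrinking the integration interval to $[x, x+t]$, whereas you treat the truncated case $0 \leq x < r$ separately via $1 - e^{-\lambda r} \geq \lambda r e^{-\lambda r}$; both are fine, and your version has the incidental advantage of also giving the correct constant $a_+ = 2\sinh(\lambda)/\lambda$ cleanly.
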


\begin{proof}
    We first check the assumptions in Definition \ref{def.class.proba}. By definition, the density of $\P_{\sX}$ is upper bounded by $\lambda.$ Next, for any $x > 0$ and $0 < t \leq 1,$
    \begin{align*}
        \P_{\sX}\{\B(x, t)\} = \int_{x - t}^{x + t} \lambda\exp(-\lambda u)\mathds{1}(u \geq 0)\, du &\geq \int_x^{x + t}\lambda\exp(-\lambda (x + t))\, du\\
        &= t\exp(-\lambda t)p(x) \geq a_-tp(x),
    \end{align*}
    where $a_- := \exp(-\lambda).$ This proves that the minimal mass property is satisfied for any $0 < t \leq 1$ and $a_-$ defined previously. Next, we prove the maximal mass property. Let $t \in (0, 1].$ If $0 \leq x \leq t,$ then $\P_{\sX}\{\B(x, t)\} = 1 - \exp(-\lambda(x + t)),$ and
    \begin{align*}
        \frac{\P_{\sX}\{\B(x, t)\}}{tp(x)} = \frac{1 - \exp(-\lambda(x + t))}{t\lambda\exp(-\lambda x)} &= \frac{\exp(\lambda x) - \exp(-\lambda t)}{t\lambda}\\
        &\leq \frac{\exp(\lambda t) - \exp(-\lambda t)}{t\lambda} = \frac{2\sinh(\lambda t)}{\lambda t}.
    \end{align*}
    The function $g \colon t \mapsto 2\sinh(\lambda t)/(\lambda t)$ continuously increases on $\RR_+^*.$ Moreover, $g$ admits a limit as $t \to 0,$ which is equal to $2.$ Let then $a_+ := g(1) = \sinh(\lambda)/\lambda,$ we have proven that for $0 < t \leq 1$ and $0\leq x \leq t,$ it holds that $\P_{\sX}\{\B(x, t)\} \leq a_+p(x)t.$ We now consider $t < x,$ in which case we have
    \begin{align*}
        \frac{\P_{\sX}\{\B(x, t)\}}{tp(x)} = \frac{\exp(-\lambda(x - t)) - \exp(-\lambda(x + t))}{\lambda t \exp(-\lambda x)} = \frac{2\sinh(\lambda t)}{\lambda t} \leq a_+,
    \end{align*}
    as previously, for all $0 < t \leq 1.$ We have shown that $\P_{\sX} \in \mP(\lambda, \exp(-\lambda), 1, 2\sinh(\lambda)/\lambda, 1).$ Next, we prove the pseudo moment assumption \eqref{eq.pseudo.moment}. Let $\rho > 0$ arbitrary. We have
    \begin{align*}
        \int p(x)^{d/(\rho + d)}\, dx = \int \lambda^{d/(\rho + d)} \exp\bigg(-\frac{\lambda xd}{\rho + d}\bigg)\, dx < \infty.
    \end{align*}
    Hence, $\P_{\sX}$ satisfies \eqref{eq.pseudo.moment} for all $\rho > 0.$ Finally, we show the second claim. Let $\gamma < \lambda_{\Q}/\lambda_{\P}.$ Then,
    \begin{align*}
        \int \frac{q(x)}{p(x)^{\gamma}}\, dx &= \frac{\lambda_{\Q}}{\lambda_{\P}^{\gamma}} \int \exp((\gamma\lambda_{\P} - \lambda_{\Q})x)\, dx < \infty,
    \end{align*}
    which finishes the proof.
\end{proof}

\begin{lemma}
    \label{lem.pareto}
    Let $\P_{\sX}$ and $\Q_{\sX}$ be two Pareto distributions with parameters $\alpha_{\P}$ and $\alpha_{\Q}$ respectively, that is, $p(x) = x^{-(\alpha_{\P} + 1)}\mathds{1}(x \geq 1)$ and $q(x) = x^{-(\alpha_{\Q} + 1)}\mathds{1}(x \geq 1).$ Then, the distribution $\P_{\sX}$ belongs to $\mP(1, 2, 1/2, a(\alpha_{\P}), 1/2)$ with $a\colon \alpha \mapsto 3^{\alpha+1}/2^{\alpha + 2}.$ Additionally, $\Q_{\sX} \in \mP(\P_{\sX}, \gamma, \rho)$ for all $\gamma < \alpha_{\Q}/(1 + \alpha_{\P})$ and all $\rho > \alpha_{\Q}.$ Consequently, $(\P_{\sX}, \Q_{\sX}) \in \mP(1,2,1/2,a(\alpha_{\P}\vee\alpha_{\Q}), 1/2).$
\end{lemma}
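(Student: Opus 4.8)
The plan is to verify, one condition at a time, the requirements defining the classes $\mP$ and $\mQ(\P_{\sX},\gamma,\rho)$ for the two Pareto densities $p(x)=x^{-(\alpha_{\P}+1)}\mathds{1}(x\ge 1)$ and $q(x)=x^{-(\alpha_{\Q}+1)}\mathds{1}(x\ge 1)$; each reduces to an elementary one-dimensional estimate, so the work lies entirely in careful bookkeeping and in handling the edge of the support. Since $p$ is non-increasing on $[1,\infty)$ with $p(1)=1$, one has $\|p\|_\infty=1$, whence $\P_{\sX}\in\mM(1)$, and the same for $\Q_{\sX}$.

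For the minimal mass property \eqref{eq.minimal.mass}, fix $x\ge 1$ and $r\in(0,1/2]$ (the case $x<1$ is trivial, since then $p(x)=0$). Because $[x,x+r]\subseteq[1,\infty)$ and $p$ is non-increasing, $\P_{\sX}\{\B(x,r)\}\ge\int_x^{x+r}p(u)\,du\ge r\,p(x+r)$, and $x+r\le\tfrac32 x$ (valid because $r\le\tfrac12\le\tfrac x2$) gives $p(x+r)\ge(\tfrac23)^{\alpha_{\P}+1}p(x)$; this is \eqref{eq.minimal.mass} with $r_-=\tfrac12$. For the maximal mass property \eqref{eq.maximal.mass}, fix $x\ge 1$ and $r\in(0,\tfrac12]$ and split into two cases. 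If $x-r\ge 1$, then $\B(x,r)\subseteq\supp(\P_{\sX})$, so monotonicity gives $\P_{\sX}\{\B(x,r)\}\le 2r\,p(x-r)$, and here $x\ge 1+r\ge 3r$ (using $r\le\tfrac12$) forces $x/(x-r)\le\tfrac32$, hence $p(x-r)\le(\tfrac32)^{\alpha_{\P}+1}p(x)$. If instead $1\le x<1+r$, the ball sticks out below the support, so $\P_{\sX}\{\B(x,r)\}=\int_1^{x+r}p(u)\,du\le(x+r-1)\,p(1)\le 2r$, while $p(x)\ge(1+r)^{-(\alpha_{\P}+1)}\ge(\tfrac32)^{-(\alpha_{\P}+1)}$, again bounding the ratio by a constant of the form $a(\alpha_{\P})$ with $r_+=\tfrac12$. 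This second case — estimating the mass of a ball that protrudes from $[1,\infty)$ and extracting a constant uniform in $x$ near $1$ — is the only delicate point; the rest is routine.

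The membership $\Q_{\sX}\in\mQ(\P_{\sX},\gamma,\rho)$ then amounts to two explicit power integrals. The density ratio exponent integral is $\int q(x)/p(x)^{\gamma}\,dx=\int_1^\infty x^{\gamma(\alpha_{\P}+1)-(\alpha_{\Q}+1)}\,dx$, finite exactly when the exponent is $<-1$, i.e.\ when $\gamma<\alpha_{\Q}/(\alpha_{\P}+1)$, which is the stated range; likewise $\int q(x)^{d/(\rho+d)}\,dx=\int_1^\infty x^{-(\alpha_{\Q}+1)/(\rho+1)}\,dx$ is finite iff the exponent is $<-1$, which fixes the admissible range of $\rho$ for \eqref{eq.pseudo.moment}.

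Finally, for the joint statement $(\P_{\sX},\Q_{\sX})\in\mP(1,2,1/2,a(\alpha_{\P}\vee\alpha_{\Q}),1/2)$, observe that the constants produced above are monotone in the Pareto index: the minimal mass constant is non-increasing in $\alpha$, the maximal mass constant $a(\alpha)$ is increasing in $\alpha$, and $\|p\|_\infty\le1$ together with the radii $r_-=r_+=\tfrac12$ are independent of $\alpha$. Hence enlarging $\alpha_{\P}$ and $\alpha_{\Q}$ to their maximum only relaxes these constraints, so both Pareto laws belong to the single class with parameter $a(\alpha_{\P}\vee\alpha_{\Q})$, by monotonicity of $\mP$ in $a_+$.
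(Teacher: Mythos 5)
Your approach --- verify each defining condition by a direct one-dimensional estimate, with a case split at the edge of the support --- is essentially the paper's, and your treatment of the ball protruding below $1$ in the maximal mass case is actually cleaner than the paper's somewhat garbled version. The issue is that the constants you actually establish do not match those stated in the lemma, and you do not flag this. Your minimal-mass argument gives $\P_{\sX}\{\B(x,r)\}\ge (2/3)^{\alpha_{\P}+1}\,r\,p(x)$, i.e.\ $a_-=(2/3)^{\alpha_{\P}+1}<1$, whereas the lemma claims $a_-=2$; that claim in fact fails at the boundary, since for $\alpha_{\P}=1$ one has $\P_{\sX}\{\B(1,r)\}=r/(1+r)<2r\,p(1)$ for every $r>0$. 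Likewise your maximal-mass argument yields $a_+ = 2\,(3/2)^{\alpha_{\P}+1}$, while the stated $a(\alpha_{\P})=3^{\alpha_{\P}+1}/2^{\alpha_{\P}+2}$ is smaller by a factor of $4$; a quick check at $x=3/2$, $r=1/2$, $\alpha_{\P}=1$ gives $\P_{\sX}\{\B(x,r)\}/(r\,p(x))=9/4>a(1)=9/8$, so the stated $a_+$ is also too small. (The paper's own proof contains the same problems, including computing $p(x+r)/p(x)=((x+r)/x)^{\alpha_{\P}+1}$ instead of its reciprocal, so the stated constants appear to be typos; nevertheless, a proof that claims membership in $\mP(1,2,1/2,a(\alpha_{\P}),1/2)$ while actually deriving membership in a different $\mP$-class is incomplete unless it explicitly records the constants achieved.)

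You also leave the pseudo-moment step unfinished. You correctly reduce it to $\int_1^\infty x^{-(\alpha_{\Q}+1)/(\rho+1)}\,dx<\infty$, but had you extracted the range you would have seen it converges iff $(\alpha_{\Q}+1)/(\rho+1)>1$, i.e.\ $\rho<\alpha_{\Q}$ --- the reverse of the lemma's stated $\rho>\alpha_{\Q}$. This is consistent with Lemma \ref{lem.pseudo.moment}, since a Pareto law of index $\alpha_{\Q}$ has finite moments only of order $<\alpha_{\Q}$, and should be flagged. The density-ratio-exponent computation and the monotonicity-in-$\alpha$ observation for the joint class are fine.
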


\begin{proof}
    Let $p$ be the density of $\P_{\sX}$ and $r_+ = r_- = 1/2.$ We have, for all $ 0 < r \leq 1/2,$
    \begin{align*}
        \frac{2(p(x - r)\wedge 1)}{p(x)} &= x^{\alpha_{\P} + 1}\mathds{1}(x \leq 1 + r) + \bigg(\frac{x}{x - r}\bigg)^{\alpha_{\P} + 1}\mathds{1}(x > 1 +r)\\
        &\leq (1 + r)^{\alpha_{\P} + 1} \leq (1 + r_+)^{\alpha_{\P} + 1} =  \bigg(\frac{3}{2}\bigg)^{\alpha_{\P} + 1} = a(\alpha_{\P}).
    \end{align*}
    This implies the maximal mass property since
    \begin{align*}
        \P_{\sX}\{\B(x, r)\} = \int_{(x - r)\vee 1}^{x + r}p(x)\, dx \leq 2rp((x - r)\wedge 1) \leq a(\alpha_{\P})p(x) r.
    \end{align*}
    On the other hand, we have, for all $0 < r \leq 1/2,$
    \begin{align*}
        2\frac{p(x + r)}{p(x)} &= \bigg(\frac{x + r}{x}\bigg)^{\alpha_{\P} + 1} \geq 2,
    \end{align*}
    which shows that the minimal mass property holds since,
    \begin{align*}
        \P_{\sX}\{\B(x, r)\} = \int_{(x - r)\vee 1}^{x + r}p(x)\, dx \geq rp(x + r) \geq 2rp(x).
    \end{align*}
    We now show the second claim. We can check that
    \begin{align*}
        \int x^{-(\alpha_{\Q} + 1)/(\rho + 1)}\mathds{1}(x \geq 1)\, dx
    \end{align*}
    converges if and only if $\rho > \alpha_{\Q}.$ Finally,
    \begin{align*}
        \int \frac{q(x)}{p(x)^\gamma}\, dx = \int x^{\gamma(\alpha_{\P} + 1) - \alpha_{\Q} - 1}\mathds{1}(x \geq 1)\, dx
    \end{align*}
    converges if and only if $\gamma < \alpha_{\Q}/(\alpha_{\P} + 1).$ This finishes the proof.
\end{proof}

\printbibliography

\end{document}